\title{Multi-type TASEP in discrete time}
\author{James Martin and  Philipp Schmidt}
\date{February 18, 2010}
\newtheorem{theorem}{Theorem}[section]
\newtheorem{cor}[theorem]{Corollary}
\newtheorem{lemma}[theorem]{Lemma}
\newtheorem{proposition}[theorem]{Proposition}
\theoremstyle{definition}
\theoremstyle{remark}
\newtheorem{remark}{Remark}[section]
\newcommand{\ZZ}{{\mathbb{Z}}}
\newcommand{\tT}{{\tilde{T}}}
\begin{document}
\maketitle 
\abstract 

The TASEP (\textit{totally asymmetric simple exclusion process}) is a
basic model for an one-dimensional interacting particle system with
non-reversible dynamics. Despite the simplicity of the model it shows
a very rich and interesting behaviour. In this paper we study some
aspects of the TASEP in discrete time and compare the results to the
recently obtained results for the TASEP in continuous time. 
In particular we focus on stationary distributions for
multi-type models, speeds of second-class particles,
collision probabilities and the ``speed process''. 
In discrete time, jump attempts may occur at different sites simultaneously, 
and the order in which these attempts are processed is important; 
we consider various natural update rules. 
\newline {\scshape Keywords:} TASEP, multi-type, second class particle, speed process.
\newline {\scshape AMS 2000 Mathematics Subject Classification:}
82C22, 60K35
\renewcommand{\sectionmark}[1]{}

\section{Introduction}

The TASEP in continuous time was introduced by Spitzer in 1970
(\cite{spitzer}) and can be described as follows. It is a Markov
process $\left( \eta_t \right)_{t \geq 0}$ on the state space $E =
\left\{ 0,1 \right\}^{\mathbb{Z}}$ where for $x \in \mathbb{Z}$ we
have that site $x$ is occupied with a particle at time $t$ iff
$\eta_t(x) =1$. Otherwise we say that site $x$ is empty at time $t$.
Starting from some initial configuration $\eta_0 \in E$,
\textit{updates} occur at each site as a Poisson process of rate 1,
independently; when an update occurs at site $x$, if there is a
particle at site $x$ and a hole to its right at site $x+1$, the
particle jumps from site $x$ to site $x+1$. If site $x$ is empty, or
if site $x+1$ is already occupied, the update has no effect.

In the model in discrete time, updates occur with some probability
$\beta \in \left( 0,1 \right)$ at each site at each time-step. Since
updates occur simultaneously, we now have to choose an order in which
to update the sites. We will consider sequential updates (from right
to left or from left to right) and sublattice parallel updates (even
sites first then odd sites).

For the model in continuous time there exists a vast amount of
literature. For an introduction and background to the topic see
Liggett's books \cite{liggett1} (pp. 361-417) and \cite{liggett2}
(pp. 209-316). However, in some physical models of interest it might
be more natural to use a discrete time scale. For example in traffic
models we can consider the reaction time of individuals as a smallest
time scale (Blythe and Evans \cite{blytheevans}, Chowdhury, Santen and
Schadschneider \cite{chowdhurysantenschadschneider} and Helbing
\cite{helbing}) and this suggests modelling traffic with a model in
discrete time. The ASEP (\textit{asymmetric simple exclusion process},
particles jump to the right at rate $p$ and to the left at rate $q < p$)
in discrete time was studied for example in Sch\"{u}tz \cite{schutz},
Hinrichsen \cite{hinrichsen}, Rajewsky, Santen, Schadschneider and
Schreckenberg \cite{rajewskysantenschadschneiderschreckenberg} and
Blythe and Evans \cite{blytheevans}. However, the behaviour of the
models in discrete time has not been analysed in as much depth as the
model in continuous time. The papers mentioned above are mainly
concerned with the model on a finite interval with open boundary
conditions and just one type of particles and analyse density profiles
and stationary distributions.

In this paper we derive further results for the TASEP in
discrete time that correspond to recently obtained results for the 
continuous-time model. These include 
stationary distributions for multi-type systems (e.g.\ \cite{ferrarimartin,
ferrarimartin2}),
laws of large numbers for the path of a second class particle 
and their connection to competition interfaces in competition
growth models (e.g.\ \cite{ferraripimentel, ferrarigoncalvesmartin}), 
and the \textit{TASEP speed process}
recently studied by Amir, Angel and Valk\'{o} \cite{amirangelvalko}.

We find that the multi-type invariant distributions for the models
with sequential updates are identical with those for the model 
in continuous time, and do not depend on the parameter $\beta$. 
This has the surprising consequence that 
various collision probabilities for different particles
in a multi-type processes started out of equilibrium, 
of the sort considered in \cite{ferrarigoncalvesmartin} and 
\cite{amirangelvalko}, are also independent of $\beta$ 
and coincide with the values for a continuous-time process. 
These probabilities correspond to survival probabilities
of clusters in the associated multi-type competition growth models. 
At the moment, the only argument we have for this property 
is indirect, using the fact that the set of invariant measures
is identical for all $\beta$; we do not know of a more 
direct argument based on local dynamics or couplings.

By contrast, in the case of sublattice-parallel updates,
the value of $\beta$ plays an important role in the 
set of stationary distributions. We extend the queue-based
construction of the multi-type stationary distributions 
from \cite{ferrarimartin, ferrarimartin2} by incorporating
queues whose arrival and service rates are different
at even and odd times.

The paper is organized as follows. In Section 2 we
will give a more formal definition of the model and introduce the
multi-type TASEP. The main results are described in Section 3,
including results concerning 
invariant measures and hydrodynamic limits for single-type models which
are required in order to state and understand the multi-type models
described above. The proofs or proof sketches for the
novel results are found in Section 4. In Section 5 we make some
brief remarks about a related discrete-time TASEP model 
with ``fully parallel updates''. 

\section{Model}
\subsection{Models in continuous and discrete time}
The TASEP in continuous time can be described by its generator $L$. For cylinder functions $f: E \rightarrow \mathbb{R}$ we have
$$ Lf(\eta) = \sum_{x \in \mathbb{Z}} \eta(x) \left(1 - \eta(x+1) \right) \left[ f \left( \eta^{x,x+1} \right) - f \left( \eta \right) \right] $$
with the configuration $\eta^{x,x+1}$ defined by
$$ \eta^{x,x+1}(y) =
\begin{cases}
	\eta(y) & y \notin \left\{ x, x+1 \right\} \\
	\eta(x+1) & y = x \\
	\eta(x) & y = x+1
\end{cases}
$$
Following ideas of Harris (1978) \cite{harris} we can use the
following graphical construction for the TASEP. Let $\{ \left( P^x_t
\right)_{t \geq 0} : x \in \mathbb{Z} \}$ be a family of independent
mean $1$ Poisson processes on a common probability space $\left(
  \Omega, \mathcal{A}, \mathbb{P} \right)$. For $x \in \mathbb{Z}$ the
process $P^x$ marks possible jumps from site $x$: If $P^x_{t} -
P^x_{t-} = 1$ and $\eta_{t-}(x)=1$ then the particle at $x$ tries to
jump one step to the right at time $t$. The jump is successful if the
adjacent site $x+1$ was unoccupied, i.e. $\eta_{t-}(x+1)=0$. Note that
for every $t>0$ with positive probability ($e^{-t}$) there was no jump
in the Poisson process $P^x$ up to time $t$. Since all the Poisson
processes are independent there will be infinitely many sites $x$ such
that there were no jumps in $P^x$. These sites separate $\mathbb{Z}$
into intervals of finite length. Since no particle can have crossed
the boundaries of these intervals, it is enough to 
be able to construct the process separately on each of these finite intervals.


We can use the same graphical construction to define the TASEP in
discrete time. All we have to do is replace the family of Poisson
processes with a family $\{ \left( B^x_n \right)_{n \geq 0} : x \in
\mathbb{Z} \}$ of independent Bernoulli processes with parameter $\beta
\in \left( 0,1 \right)$ and decide on an update rule for the sites. As
mentioned in the introduction we will mainly consider the following
three update rules:
\begin{itemize}
 \item Rule R1: Updates are processed in order from right to left.
 \item Rule R2: Updates are processed in order from left to right.
 \item Rule R3: All updates at even sites are processed before all updates at odd sites.
\end{itemize}
To highlight the difference between the three update rules we can look at the following example:
\begin{figure}[t]
\begin{center}
\begin{pspicture}(0,-0.5)(5,1)
	\psline{->}(0,0)(5,0)
	\psdots(1,0)(2,0)(3,0)(4,0)
	\cnode(1,0.5){0.3}{A1}
	\cnode(2,0.5){0.3}{A1}
	\uput{0.2}[270](1,0){*}
	\uput{0.2}[270](2,0){*}
	\uput{0.2}[270](3,0){*}
	\uput{0.5}[270](1,0){-1}
	\uput{0.5}[270](2,0){0}
	\uput{0.5}[270](3,0){1}
	\uput{0.5}[270](4,0){2}
\end{pspicture}
\end{center}
\setcaptionmargin{1cm}
\caption{Configuration at time $n$ and jump marks} \label{TDP}
\begin{center}
\begin{pspicture}(0,-0.5)(5,1)
	\psline[arrows=->](0,0)(5,0)
	\psdots(1,0)(2,0)(3,0)(4,0)
	\cnode(2,0.5){0.3}{A1}
	\cnode(3,0.5){0.3}{A1}
	\psarc{<-}(1.5,0.5){0.5}{40}{140}
	\psarc{<-}(2.5,0.5){0.5}{40}{140}
	\uput{0.5}[270](1,0){-1}
	\uput{0.5}[270](2,0){0}
	\uput{0.5}[270](3,0){1}
	\uput{0.5}[270](4,0){2}
\end{pspicture}
\end{center}
\setcaptionmargin{1cm}
\caption{Configuration at time $n+1$ if we apply R1} \label{TDP1}
\begin{center}
\begin{pspicture}(0,-0.5)(5,1)
	\psline[arrows=->](0,0)(5,0)
	\psdots(1,0)(2,0)(3,0)(4,0)
	\cnode(1,0.5){0.3}{A1}
	\cnode(4,0.5){0.3}{A1}
	\psarc{<-}(2.5,0.5){0.5}{40}{140}
	\psarc{<-}(3.5,0.5){0.5}{40}{140}
	\uput{0.5}[270](1,0){-1}
	\uput{0.5}[270](2,0){0}
	\uput{0.5}[270](3,0){1}
	\uput{0.5}[270](4,0){2}
\end{pspicture}
\end{center}
\setcaptionmargin{1cm}
\caption{Configuration at time $n+1$ if we apply R2} \label{TDP2}
\begin{center}
\begin{pspicture}(0,-0.5)(5,1)
	\psline[arrows=->](0,0)(5,0)
	\psdots(1,0)(2,0)(3,0)(4,0)
	\uput{0.5}[270](1,0){-1}
	\uput{0.5}[270](2,0){0}
	\uput{0.5}[270](3,0){1}
	\uput{0.5}[270](4,0){2}
	\cnode(2,0.5){0.3}{A1}
	\cnode(4,0.5){0.3}{A1}
	\psarc{<-}(2.5,0.5){0.5}{40}{140}
	\psarc{<-}(1.5,0.5){0.5}{40}{140}
	\psarc{<-}(3.5,0.5){0.5}{40}{140}
\end{pspicture}
\end{center}
\setcaptionmargin{1cm}
\caption{Configuration at time $n+1$ if we apply R3} \label{TDP3'}
\end{figure}
\\

Say we are at time $n$ in the configuration displayed in Figure \ref{TDP}, with particles at sites $-1$ and $0$ and holes at sites $1$ and $2$. There are jump attempts at the sites marked with a $*$. The resulting configurations under the three different update rules are as shown in Figures \ref{TDP1} - \ref{TDP3'}.

Note that in R2, a single particle may jump several times at the same time-step (but jumps are only possible onto sites that were already empty at the beginning of the time-step). In R1, several neighbouring particles may jump together at the same time-step. There is a natural symmetry between systems R1 and R2 - one is transformed into the other by exchanging left and right and exchanging the roles of particle and hole. For the last example (R3) the parity of the sites is important.

In connection with the \textit{speed process} we will also mention the model with odd/even updates (R4). Again this can be obtained from R3 by a simple transformation.

As seen above, each of these models shows a
slightly different behaviour, but if we rescale time by a factor
$\beta^{-1}$ and let $\beta \rightarrow 0$ then they converge to the
model in continuous time. In this sense the model in discrete time is
more general than the model in continuous time (which in the following
we will denote by R0) since we can recover the model in continuous
time from the model in discrete time. In discrete time we can also
consider the model with (fully) parallel updates where all sites are
updated simultaneously. However, many of the methods developed for
the model in continuous time that work in the models R1-R3 fail in
this case. We will mention some questions connected to this model in
Section 5.

\subsection{Percolation representations}\label{percrep}
Both in continuous and in discrete time, one important feature of the
TASEP is its connection to \textit{last-passage percolation} and the
\textit{corner growth model}. Here we consider a special case which
corresponds to a particular initial condition of the TASEP, 
in which, at time 0, all non-positive sites $x\leq0$ contain a particle 
and all positive sites $x>0$ are empty. We label the particles from 
right to left, so that for $i\geq 1$, particle $i$ starts at 
site $-i+1$ at time 0 (and always remains to the right of particle $i+1$).


For $n,k\geq1$, let $T(n,k)$ be the time that particle $k$ jumps to its right for the 
$n$th time. Then it is well-known that the variables $T(n,k)$ satisfy the recursions
\begin{equation}\label{Tnkrecursion}
T(n,k) = \max \left\{ T(n-1,k) , T(n,k-1) \right\} + v(n,k) \qquad n,k \geq 1
\end{equation}
with boundary conditions $T(0,k)=T(n,0)=0$ for all $n,k$, 
where $v(n,k)$ are i.i.d.\ exponential random variables with mean 1.
The interpretation is that before particle $k$ can make its $n$th jump,
both particle $k$ must have made its $(n-1)$st jump, and particle $k-1$ 
must have made its $n$th jump. Once these two events have happened, 
an amount of time which is exponentially distributed with rate 1 passes
before particle $k$ makes its $n$th jump; this is the random variable $v(n,k)$. 

The random variables $T(n,k)$ have an interpretation in terms of last-passage
percolation times. 
For an increasing path $\pi$ from $z \in \ZZ_+^2$ to
$z' \in \ZZ_+^2$, i.e. a path with increments in
$\{ (0,1) , (1,0) \}$, define the weight of $\pi$ by
$$ S \left( \pi \right) = \sum_{z'' \in \pi} v(z''). $$
Write $\Pi(z,z')$ for the set of all increasing paths from $z$ to $z'$; then
\begin{equation}
R(z,z') = \max_{\pi \in \Pi(z,z')} S(\pi) \label{R}
\end{equation}
is the weight of the heaviest path from $z$ to $z'$. 
Then, via the recursions (\ref{Tnkrecursion}),
it is easy to see that $T(n,k)=R((1,1),(n,k))$.
In this setting we may interpret the random variable $v(n,k)$ as a weight
at the lattice point $(n,k)$.

We turn to the discrete-time case. Now let $w(n,k)$ be i.i.d.\ random
variables whose distribution is geometric with parameter $\beta \in (0,1)$ (by
which we mean that $\mathbb{P} \left[ w(z)=k \right] = (1-\beta)^{k}\beta$ for
$k=0,1,2,\dots$).
We define passage-times $\tT(n,k)$ 
analogous to $T(n,k)$ above by the recursions
\begin{equation*}
\tT(n,k) = \max \left\{ \tT(n-1,k) , \tT(n,k-1) \right\} + w(n,k) \qquad n,k \geq 1.
\end{equation*}
We will describe three variants on these recursions, which pertain to 
the different update rules R1, R2 and R3. As above, $w(n,k)$ 
will correspond to the delay before particle $k$ makes its $n$th jump, 
once it is free to do so. 
For $i=1,2,3$, let
$T^{(i)}(n,k)$ be the $n$th jump of particle $k$ under update rule R$i$ with boundary conditions $T^{(i)}(0,k) = T^{(i)}(n,0) = -1$ for all $n,k$.

\bigskip

\noindent
\textbf{Rule R1 (updates from right to left)}
  \begin{itemize}
	\item Recursions:
		\begin{align}
		T^{(1)}(n,k)
		&= \max \left\{ T^{(1)}(n-1,k) +1 , T^{(1)}(n,k-1) \right\} + w(n,k) \notag \\
		&= \tT(n,k) + n - 1 \label{T1}
		\end{align}
	\item In accordance with the updates from right to left, particles $k$ and $k-1$ can make their $n$th jumps
	at the same time-step, but two jumps by the same particle must be separated by at least one time-step. 
	\item This corresponds to a percolation model in which as well as weights $w(n,k)$ at the vertices
	$(n,k)\in\ZZ_+^2$, we have weights of size 1 on each horizontal edge between $(n-1,k)$ and $(n,k)$.
  \end{itemize}
\textbf{Rule R2 (updates from left to right)}
  \begin{itemize}
	\item Recursions:
		\begin{align}
		T^{(2)}(n,k)
		&= \max \left\{ T^{(2)}(n-1,k) , T^{(2)}(n,k-1) +1 \right\} + w(n,k) \notag \\
		&= \tT(n,k) + k - 1 \label{T2}
		\end{align}
	\item With updates from left to right, a particle may make several jumps at the same time-step,
	but at least one time-step must separate the $n$th jump of particles $k-1$ and $k$. 
	\item In the corresponding percolation model, the weights of size 1 are now on the vertical edges of the lattice.
\end{itemize}
\textbf{Rule R3 (even updates then odd updates)}
  \begin{itemize}
	\item Recursions:
		\begin{align}
	 	T^{(3)}(n,k)
		&= \begin{cases}
			\max \left\{ T^{(3)}(n-1,k) + 1, T^{(3)}(n,k-1) + 1 \right\} + w(n,k) \\
			\qquad \qquad \qquad \qquad \qquad \qquad \qquad \qquad \qquad \qquad \qquad n+k \text{ even} \\
			\max \left\{ T^{(3)}(n-1,k), T^{(3)}(n,k-1) \right\} + w(n,k) \\
			\qquad \qquad \qquad \qquad \qquad \qquad \qquad \qquad \qquad \qquad \qquad n+k \text{ odd}
		   \end{cases} \notag \\
		&= \begin{cases}
			\tT(n,k) + \frac{n+k-2}{2} & n+k \text{ even} \label{T3} \\
			\tT(n,k) + \frac{n+k-3}{2} & n+k \text{ odd}
		   \end{cases}
		\end{align}
	\item Now the edge weights of size 1 are added to all edges with an upper/right point $(n,k)$
such that $n+k$ is even.
\end{itemize}
For the model in continuous time we have, for $x > 0$,
\begin{equation}
\lim_{n \rightarrow \infty} \frac{T([xn],n)}{n} = \left( \sqrt{x} + 1 \right)^2 \qquad \text{a.s.} \label{LPPexp}
\end{equation}
This was essentially first shown in \cite{rost}.
Replacing the exponential weights by geometric weights gives
\begin{equation}
\lim_{n \rightarrow \infty} \frac{\tT([xn],n)}{n} = \frac{(1-\beta)x + 2 \sqrt{(1-\beta)x} + (1-\beta)}{\beta} \qquad 
\text{a.s.}; \label{LPPgeom}
\end{equation}
see for example \cite{oconnell}.
Using (\ref{T1})-(\ref{T3}), this can easily be used to give similar laws of large numbers for
$T^{(i)}([xn],n)$, $i=1,2,3$.


We may also view the system as a growth model.
For the continuous-time case,
\begin{equation}\label{Gdef}
G_t = \left\{ (x,y) \in \ZZ_+^2 : T(x,y) \leq t \right\}
\end{equation}
be the set of vertices whose passage-time is less than $t$. This gives
a cluster in $\ZZ_+^2$ which grows over time; there is a 1-1 correspondence 
relating $G_t$ to the configuration of the TASEP at time $t$;
the length of the row at height $k\in\ZZ_+$ is the number of jumps 
particle $k$ has made in the TASEP. 
In a similar way we can define $G^{(1)}(t)$, $G^{(2)}(t)$ and $G^{(3)}(t)$ 
by replacing $T$ in (\ref{Gdef}) by $T^{(1)}$, $T^{(2)}$ or $T^{(3)}$ respectively.


\subsection{Multi-type models}
In the \textit{multi-type TASEP} each particle belongs to a class $y \in
\mathbb{Z}$ (or more generally $y \in \mathbb{R}$). All particles can
still jump into unoccupied sites. When a particle of class $k$ tries
to jump into a site which is occupied by a particle of class $j$ two
things can happen: If $k \geq j$ the jump is suppressed and if $k < j$
then the two particles swap. This means that the lower the class of a
particle the higher is its priority. 

An $N$-type TASEP (containing $N$ classes of particles and holes) 
can be regarded as a coupling of 
$N$ ordered single-type TASEPs. If $\eta^1_0$, $\ldots$, $\eta^N_0$ are
$N$ TASEP configurations such that $\eta^1_0(x) \leq \ldots \leq
\eta^N_0(x)$ for all $x \in \mathbb{Z}$, we can use the same Poisson or
Bernoulli processes (this is called \textit{basic coupling}) to get a
joint realization of the TASEPs $\eta^1$, $\ldots$, $\eta^N$.

The basic coupling preserves the ordering between the processes
(since the updates are processed one by one, this is true
for the discrete-time models just as in the continuous-time case). 
Thus we can define a multi-type 
process $\xi$ by
$$ \xi_t(x) = N + 1 - \sum_{k=1}^N \eta^k_t(x). $$
We write $\xi_t=R\eta_t$.
Particles of class $k$ occur at sites $x$ where $\xi(x)=k$.
For $k>1$,
these sites represent discrepancies between the processes
$\eta^{k-1}$ and $\eta^k$. We may regard particles of type $N+1$ as holes.
Then $\xi$ behaves
like a multi-type TASEP with $N$ classes of particles and holes.
See for example \cite{ferrarimartin} for further details.

\section{Results}
We will divide this section into three subsections: The first deals
with invariant measures for single- and multi-type models, the second
with hydrodynamic limits and the third with multi-type models out of
equilibrium.

\subsection{Invariant measures}

\begin{proposition} \label{invariant} For the TASEP in continuous time as well as the discrete time TASEPs R1 and R2, the Bernoulli product measures $\nu_{\rho}$ with marginals $\rho \in \left[ 0,1 \right]$ are the only translation invariant stationary ergodic measures with constant marginals. For the TASEP R3, the Bernoulli product measures $\mu_{\rho}$ with marginals $\rho \in \left[0,1 \right]$ on even sites and marginals $\frac{\rho (1 - \beta)}{1 - \rho \beta}$ on odd sites are the only stationary ergodic measures with marginals that are translation invariant under even shifts.
\end{proposition}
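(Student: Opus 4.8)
The plan is to treat each of the two assertions separately and, within each, to split the statement into an \emph{existence} part (the named product measures are indeed stationary) and a \emph{uniqueness} part (they are the only ones of the stated form). The existence part is a verification; the uniqueness part is where the real content lies and is where I expect most of the work.

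For stationarity, in continuous time this is the classical generator identity $\int Lf\,d\nu_{\rho}=0$ for cylinder functions $f$, the two contributions of each bond $(x,x+1)$ cancelling after the swap $\eta\mapsto\eta^{x,x+1}$. For R1 and R2 I would instead check invariance of $\nu_{\rho}$ under the one-step transition kernel directly. The key structural fact is that a sequential update is resolved in a single directional sweep, so the one-step kernel respects the spatial order and the output configuration on a half-line is a function of the input and marks on (essentially) the same half-line; combined with the product form of $\nu_{\rho}$ this lets one verify invariance on finite marginals, controlling the boundary contribution coming from the sweep direction, and one sees the output is again i.i.d.\ Bernoulli with parameter $\rho$, independent of $\beta$. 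A more conceptual route, which also fits the queue-based viewpoint used later in the paper, is a discrete-time Burke's theorem for the associated geometric queues: the stationary throughput of the queue equals its input, which is exactly the product-measure invariance. For R3 the update consists of two sweeps (even sites, then odd), and the Burke computation for a queue with distinct even/odd service produces the shifted odd-site marginal $\rho(1-\beta)/(1-\rho\beta)$; this is precisely the value making the distribution left on the odd sites by the even sweep consistent with the odd sweep.

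For uniqueness I would adapt Liggett's continuous-time argument based on the basic coupling and the theory of discrepancies (second-class particles). Given a candidate translation-invariant stationary ergodic measure $\mu$ of density $\rho$, couple it via the basic coupling with the reference measure ($\nu_{\rho}$, or $\mu_{\rho}$ in the R3 case) and run the coupled process in stationarity. Because the basic coupling preserves the partial order---this holds in discrete time too, since updates are processed one at a time---discrepancies are never created; a $+$ discrepancy ($\eta(x)=1,\zeta(x)=0$) and a $-$ discrepancy behave like second-class particles and can disappear only by pairwise annihilation when they meet. A flux/conservation argument together with ergodicity then forces the stationary translation-invariant coupling to concentrate on configurations whose discrepancies are all of one sign: otherwise opposite discrepancies would collide and annihilate at positive rate, strictly decreasing their (time- and shift-invariant) density, a contradiction. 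Since the two coupled marginals have equal total density, the surviving sign has density zero as well, so there are no discrepancies and $\mu$ equals the reference measure.

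The main obstacle I expect is the R3 case, for two linked reasons. First, the density-matching step needs the relation $\rho_{o}=\rho_{e}(1-\beta)/(1-\rho_{e}\beta)$ between the odd and even densities of an arbitrary stationary $\mu$; this should be extracted from the conservation of particles in stationarity (the current across every bond is equal, so the even-bond and odd-bond currents coincide), although for a non-product $\mu$ this equation a priori involves two-point correlations and so must be handled with care. Second, only invariance under even shifts is available, so the coupling and discrepancy/flux argument must be organised per period of length two rather than per site, while keeping track of which sweep---even or odd---is able to move a given discrepancy. Adapting the continuous-time second-class-particle machinery to this discrete-time, period-two setting, and in particular closing the gap between the single constraint from current conservation and the full density relation needed for the coupling, is where I expect the real difficulty to be.
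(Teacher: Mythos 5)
Your stationarity verifications are fine and essentially reproduce what the paper's cited references contain (Liggett for R0, Blythe--Evans for R1/R2, Rajewsky et al.\ for R3); indeed your Burke's-theorem viewpoint matches the remark the paper itself makes in Section 4.1 about queues with distinct even/odd rates. Where you genuinely diverge from the paper is uniqueness: the paper does not use Liggett's basic-coupling/discrepancy argument at all, but instead appeals to the approach of Mountford and Prabhakar, i.e.\ a queueing fixed-point argument (an ergodic stationary arrival process of given rate, passed through the associated queueing operator, converges to the product fixed point, so that fixed point is unique). That route is chosen precisely because it is insensitive to the features of discrete time that cause trouble for the coupling route you propose.

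There are two concrete gaps in your uniqueness sketch. First, the step ``opposite discrepancies collide and annihilate at positive rate'' is not free: in Liggett's continuous-time proof this rests on a local generator computation plus irreducibility, whereas for R1/R2 the one-step transition is \emph{non-local} (within a single sweep a particle's jump depends on whether sites to one side were vacated earlier in the same sweep, a dependence of unbounded, geometrically distributed range), so the continuous-time rate bookkeeping does not transfer verbatim and must be rebuilt, e.g.\ by conditioning on the sites with no update attempt. Second, and more seriously, the R3 case: your coupling argument cannot even be set up until you know that an arbitrary stationary ergodic $\mu$ satisfies $\rho_o=\rho_e(1-\beta)/(1-\rho_e\beta)$, since you need matching densities on \emph{both} parities to conclude that discrepancies vanish. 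Your proposed derivation from current conservation fails quantitatively: stationarity gives only the single equation $J_{\mathrm{even\,bond}}=J_{\mathrm{odd\,bond}}$, and for R3 the odd-bond current is computed in the post-even-sweep configuration, so it involves joint correlations of $\left(\eta(0),\dots,\eta(3)\right)$; for a general non-product $\mu$ one equation among higher-order correlation functions cannot determine $\rho_o$ as a function of $\rho_e$. In other words, the even/odd density relation is a \emph{conclusion} of the uniqueness theorem, not an input one can extract beforehand --- which is exactly what the Mountford--Prabhakar fixed-point argument delivers (the fixed point of the even/odd queueing operator automatically carries the related rates), and why the paper invokes it instead of the coupling route. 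As it stands, your proposal leaves the R3 assertion unproved, as you yourself anticipate.
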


\begin{remark}
Interestingly, the marginals of the invariant Bernoulli product measures for the models R1 and R2 do not depend on the model parameter $\beta$, and coincide with the invariant measures for the model in continuous time. In the model R3 however, the densities at even and odd sites differ (with a specific relation between them) and the measure depends on the parameter $\beta$.
 \end{remark}

\begin{proof}
For references see for example Liggett \cite{liggett1} for R0, Blythe and Evans \cite{blytheevans} for R1,R2 and Rajewsky, Santen, Schadschneider and Schreckenberg \cite{rajewskysantenschadschneiderschreckenberg} for R3. The uniqueness statements can be proved following the approach of Mountford and Prabhakar \cite{mountfordprabhakar}.
\end{proof}

We now turn to the construction of invariant measures for systems
with more than one class of particles. We use the
construction based on a system of queues in tandem developed in
\cite{ferrarimartin}, and begin by recalling notation from that paper.

Given two processes $\alpha_1$ and $\alpha_2$, 
taking values in $\{0,1\}^\ZZ$ and representing
the arrival and service processes of a queue respectively,
let $D(\alpha_1,\alpha_2)$ 
be the process of departures from the queue. 
Now define $D^{(1)}(\alpha)=\alpha$, $D^{(2)}(\alpha_1,\alpha_2)=
D(\alpha_1,\alpha_2)$, and recursively
$D^{(n)} \left( \alpha_1, \ldots , \alpha_n
\right) = D \left( D^{(n-1)} \left( \alpha_1 , \ldots , \alpha_{n-1}
  \right) , \alpha_n \right)$ for $n>2$. 
(The process $D^{(n)}$ can be seen as the departure process from a 
system of $n-1$ queues in tandem). 
Now for $\alpha=(\alpha_1,\dots,\alpha_n)$ we can define a system of $n$ ordered
single-type TASEP configurations, denoted $T\alpha = \eta = \left( \eta^1 ,
  \ldots , \eta^n \right)$ by $ \eta^k = D^{(n-k+1)} \left( \alpha_k ,
  \ldots , \alpha_n \right) $. Then the corresponding
multi-type configuration $\xi = \xi^{(1,\ldots,n)}$ is given by
$\xi=R\eta=RT\alpha$, with $\xi(x) = n + 1 - \sum_{k=1}^n \eta^k(x) $ (as in
the last paragraph of Section 2). See Remark \ref{queueexplanation} below
for further explanation of the construction.

We can now state the main result. 

To state this result, we work with systems with jumps 
\textit{from right to left}. To return to the systems defined before,
one simply takes the space-reversal ($\tilde\eta_t(x)=\eta_t(-x)$). 
Note that time in the queueing system corresponds to space in the particle system. 

\begin{theorem} \label{invariantmulti} If $\alpha = \left( \alpha_1 ,
    \ldots , \alpha_n \right)$ has distribution $\nu = \nu_{\rho_1}
  \times \ldots \times \nu_{\rho_n}$ ($\mu = \mu_{\rho_1} \times
  \ldots \times \mu_{\rho_n}$ respectively for model R3) with $\rho_1
  < \ldots < \rho_n$, then the law of $T \alpha = \eta$ is invariant
  for the coupled multi-line TASEPs R0,R1 and R2 (R3 respectively) and
  the law of $RT\alpha = R \eta = \xi$ is invariant for the multi-type
  TASEPs R0, R1 and R2 (R3 respectively) with jumps from right to
  left. These are the unique stationary translation invariant
  (invariant under even shifts respectively) ergodic measures with
  density $\rho_1$ of first class particles (density $\rho_1$ of first
  class particles on even sites), density $\rho_2 - \rho_1$ of second
  class particles (density $\rho_2 - \rho_1$ of second class particles
  on even sites), etc.
\end{theorem}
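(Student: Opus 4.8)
The plan is to follow the queueing-based strategy of Ferrari and Martin \cite{ferrarimartin, ferrarimartin2}, with the discrete-time dynamics and the three update rules entering only through a suitable Burke-type output theorem for a single queue; the case R0 is precisely the known continuous-time result, so the new content is R1, R2 and R3. First I would separate the two assertions: the statement that $\mathcal{L}(T\alpha)$ is invariant for the coupled multi-line process, and the uniqueness statement. The second reduces to the first together with Proposition \ref{invariant}, so the bulk of the work is the invariance.

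For the invariance I would argue by induction on the number of classes $n$, the base case $n=1$ being $\eta^1=\alpha_1\sim\nu_{\rho_1}$, which is invariant by Proposition \ref{invariant}. The engine of the induction is a \emph{Burke's theorem} for a single discrete-time queue: if the arrival process $\alpha_1$ is Bernoulli($\rho_1$) and the service process $\alpha_2$ is Bernoulli($\rho_2$) with $\rho_1<\rho_2$, then in the stationary regime the departure process $D(\alpha_1,\alpha_2)$ is again Bernoulli($\rho_1$), and moreover the pair consisting of the departure process and the process of unused services is again of product form with the appropriate marginals, so that the property can be propagated through the tandem. This is where the update rule enters: the queueing discipline attached to $D$ differs for R1, R2 and R3 (reflecting whether neighbouring particles may move together, whether a particle may take several steps in one step-time, or the even/odd ordering), so the output theorem must be established separately in each case. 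For R3 the queue has arrival and service probabilities that alternate with parity; tracking the resulting two-periodic stationary regime is exactly what produces the odd-site marginal $\rho(1-\beta)/(1-\rho\beta)$ appearing in Proposition \ref{invariant}.

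With the output theorem in hand, the invariance follows from an intertwining argument: I would show that one step of the coupled TASEP update R$i$ applied to $\eta=T\alpha$ agrees in law with $T\alpha'$, where $\alpha'$ is obtained from $\alpha$ by a map that preserves the product measure $\nu$ (respectively $\mu$). Concretely, the update commutes with the queueing operator $D$ — updating the departure process is the same as updating the inputs and re-running the queue — and the Burke property guarantees that the updated inputs remain independent with unchanged marginals. Peeling off the outermost queue and invoking the inductive hypothesis for the $(n-1)$-line subsystem built from $(\alpha_2,\ldots,\alpha_n)$ then closes the induction; translation invariance (invariance under even shifts for R3) is inherited from $\alpha$ because $T$ and $R$ commute with the relevant shifts.

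Finally, uniqueness: an ergodic translation-invariant measure with the prescribed class densities is determined by its single-type layers $\eta^1,\ldots,\eta^n$, each of which must, by Proposition \ref{invariant}, be the Bernoulli measure of the stated density; a coupling and ergodicity argument as in \cite{ferrarimartin} then forces the joint law to coincide with $\mathcal{L}(T\alpha)$. I expect the main obstacle to be the discrete-time output theorem, and in particular its R3 version: unlike in continuous time, several jumps can be resolved within a single time-step, so the commutation of the update with $D$ and the independence in Burke's theorem require careful bookkeeping of the order in which simultaneous updates are processed, and the parity-dependent rates of R3 make the stationary queue a two-periodic chain rather than a standard single-server queue.
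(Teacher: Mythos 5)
Your overall route is the same as the paper's: the paper also proves this by adapting the Ferrari--Martin strategy, invoking the essentially unique stationary trajectories of Mountford--Prabhakar type (Proposition 8 in \cite{ferrarimartin}), a discrete-time Burke/dual-point theorem (Proposition 10), invariance of the product measure for the multiline dynamics (Proposition 11), and the intertwining of the queueing collapse with the coupled TASEP dynamics, proved by induction with case-checking at $n=2$ (Proposition 12); the parity-dependent Burke theorem for R3 that you anticipate, with its two-periodic reversible queue-length distribution, is exactly what the paper records in a remark after the proof. Uniqueness is likewise deferred to Mountford--Prabhakar-type arguments in both treatments.

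However, the ``engine of the induction'' as you state it is false. You claim that in equilibrium ``the pair consisting of the departure process and the process of unused services is again of product form with the appropriate marginals.'' This cannot hold under any reading: a departure and an unused service each require a service at that site, and a service is either used or unused, so the two indicator processes are disjoint and hence dependent; and if ``product form'' is meant as i.i.d.\ across sites of the pair-valued (three-state) process, this is precisely the assertion that the two-type stationary measure is a product measure, which is false --- the entire reason the queueing (or matrix-product) construction is needed is that the collapsed multi-type measure is \emph{not} product, only its single-type layers being Bernoulli. What actually propagates through the tandem is the product structure of the \emph{input lines} $(\alpha_1,\dots,\alpha_n)$ under the multiline dynamics, and the fact that makes this work is the independence half of Burke's theorem: the departures (dual points) prior to time $m$ are independent of the configuration at time $m$ (Propositions 10 and 11 in \cite{ferrarimartin}, which the paper invokes). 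You do state the correct fact later (``the updated inputs remain independent with unchanged marginals''), so the repair is local: discard the product-form claim for (departures, unused services), and make the dual-point independence statement the lemma you establish separately for R1, R2 and R3 --- it is that statement, not any product form of the output, that requires the careful bookkeeping of simultaneous updates and of the parity-dependent rates which you correctly identify as the main obstacle.
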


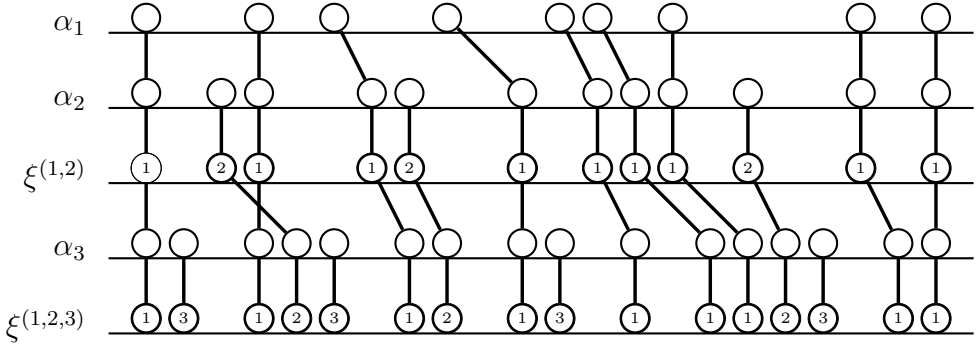
\begin{figure}[t]
\begin{center}
\begin{pspicture}(-3,-3.5)(8,2)
	\psline(-2.5,1)(9,1)
	\psline(-2.5,0)(9,0)
	\psline(-2.5,-1)(9,-1)
	\psline(-2.5,-2)(9,-2)
	\psline(-2.5,-3)(9,-3)
	\cnode(-2,1.2){0.2}{A1}
	\cnode(-0.5,1.2){0.2}{A2}
	\cnode(0.5,1.2){0.2}{A3}
	\cnode(2,1.2){0.2}{A4}
	\cnode(3.5,1.2){0.2}{A5}
	\cnode(4,1.2){0.2}{A6}
	\cnode(5,1.2){0.2}{A7}
	\cnode(7.5,1.2){0.2}{A8}
	\cnode(8.5,1.2){0.2}{A9}
	\cnode(-2,0.2){0.2}{B1}
	\cnode(-1,0.2){0.2}{B2}
	\cnode(-0.5,0.2){0.2}{B3}
	\cnode(1,0.2){0.2}{B4}
	\cnode(1.5,0.2){0.2}{B5}
	\cnode(3,0.2){0.2}{B6}
	\cnode(4,0.2){0.2}{B7}
	\cnode(4.5,0.2){0.2}{B8}
	\cnode(5,0.2){0.2}{B9}
	\cnode(6,0.2){0.2}{B10}
	\cnode(7.5,0.2){0.2}{B11}
	\cnode(8.5,0.2){0.2}{B12}
	\cput[linewidth=0.1pt](-2,-0.8){\tiny 1}
	\cnode[linewidth=0pt](-2,-0.8){0.2}{C1}
	\cput[linewidth=0.1pt](-1,-0.8){\tiny 2}
	\cnode(-1,-0.8){0.2}{C2}
	\cput[linewidth=0.1pt](-0.5,-0.8){\tiny 1}
	\cnode(-0.5,-0.8){0.2}{C3}
	\cput[linewidth=0.1pt](1,-0.8){\tiny 1}
	\cnode(1,-0.8){0.2}{C4}
	\cput[linewidth=0.1pt](1.5,-0.8){\tiny 2}
	\cnode(1.5,-0.8){0.2}{C5}
	\cput[linewidth=0.1pt](3,-0.8){\tiny 1}
	\cnode(3,-0.8){0.2}{C6}
	\cput[linewidth=0.1pt](4,-0.8){\tiny 1}
	\cnode(4,-0.8){0.2}{C7}
	\cput[linewidth=0.1pt](4.5,-0.8){\tiny 1}
	\cnode(4.5,-0.8){0.2}{C8}
	\cput[linewidth=0.1pt](5,-0.8){\tiny 1}
	\cnode(5,-0.8){0.2}{C9}
	\cput[linewidth=0.1pt](6,-0.8){\tiny 2}
	\cnode(6,-0.8){0.2}{C10}
	\cput[linewidth=0.1pt](7.5,-0.8){\tiny 1}
	\cnode(7.5,-0.8){0.2}{C11}
	\cput[linewidth=0.1pt](8.5,-0.8){\tiny 1}
	\cnode(8.5,-0.8){0.2}{C12}
	\cnode(-2,-1.8){0.2}{D1}
	\cnode(-1.5,-1.8){0.2}{D2}
	\cnode(-0.5,-1.8){0.2}{D3}
	\cnode(0,-1.8){0.2}{D4}
	\cnode(0.5,-1.8){0.2}{D5}
	\cnode(1.5,-1.8){0.2}{D6}
	\cnode(2,-1.8){0.2}{D7}
	\cnode(3,-1.8){0.2}{D8}
	\cnode(3.5,-1.8){0.2}{D9}
	\cnode(4.5,-1.8){0.2}{D10}
	\cnode(5.5,-1.8){0.2}{D11}
	\cnode(6,-1.8){0.2}{D12}
	\cnode(6.5,-1.8){0.2}{D13}
	\cnode(7,-1.8){0.2}{D14}
	\cnode(8,-1.8){0.2}{D15}
	\cnode(8.5,-1.8){0.2}{D16}
	\cnode(-2,-2.8){0.2}{E1}
	\cput[linewidth=0.1pt](-2,-2.8){\tiny 1}
	\cnode(-1.5,-2.8){0.2}{E2}
	\cput[linewidth=0.1pt](-1.5,-2.8){\tiny 3}
	\cnode(-0.5,-2.8){0.2}{E3}
	\cput[linewidth=0.1pt](-0.5,-2.8){\tiny 1}
	\cnode(0,-2.8){0.2}{E4}
	\cput[linewidth=0.1pt](0,-2.8){\tiny 2}
	\cnode(0.5,-2.8){0.2}{E5}
	\cput[linewidth=0.1pt](0.5,-2.8){\tiny 3}
	\cnode(1.5,-2.8){0.2}{E6}
	\cput[linewidth=0.1pt](1.5,-2.8){\tiny 1}
	\cnode(2,-2.8){0.2}{E7}
	\cput[linewidth=0.1pt](2,-2.8){\tiny 2}
	\cnode(3,-2.8){0.2}{E8}
	\cput[linewidth=0.1pt](3,-2.8){\tiny 1}
	\cnode(3.5,-2.8){0.2}{E9}
	\cput[linewidth=0.1pt](3.5,-2.8){\tiny 3}
	\cnode(4.5,-2.8){0.2}{E10}
	\cput[linewidth=0.1pt](4.5,-2.8){\tiny 1}
	\cnode(5.5,-2.8){0.2}{E11}
	\cput[linewidth=0.1pt](5.5,-2.8){\tiny 1}
	\cnode(6,-2.8){0.2}{E12}
	\cput[linewidth=0.1pt](6,-2.8){\tiny 1}
	\cnode(6.5,-2.8){0.2}{E13}
	\cput[linewidth=0.1pt](6.5,-2.8){\tiny 2}
	\cnode(7,-2.8){0.2}{E14}
	\cput[linewidth=0.1pt](7,-2.8){\tiny 3}
	\cnode(8,-2.8){0.2}{E15}
	\cput[linewidth=0.1pt](8,-2.8){\tiny 1}
	\cnode(8.5,-2.8){0.2}{E16}
	\cput[linewidth=0.1pt](8.5,-2.8){\tiny 1}
	\ncline[linewidth=1.3pt]{-}{A1}{B1}
	\ncline[linewidth=1.3pt]{-}{A2}{B3}
	\ncline[linewidth=1.3pt]{-}{A3}{B4}
	\ncline[linewidth=1.3pt]{-}{A4}{B6}
	\ncline[linewidth=1.3pt]{-}{A5}{B7}
	\ncline[linewidth=1.3pt]{-}{A6}{B8}
	\ncline[linewidth=1.3pt]{-}{A7}{B9}
	\ncline[linewidth=1.3pt]{-}{A8}{B11}
	\ncline[linewidth=1.3pt]{-}{A9}{B12}
	\ncline[linewidth=1.3pt]{-}{B1}{C1}
	\ncline[linewidth=1.3pt]{-}{B2}{C2}
	\ncline[linewidth=1.3pt]{-}{B3}{C3}
	\ncline[linewidth=1.3pt]{-}{B4}{C4}
	\ncline[linewidth=1.3pt]{-}{B5}{C5}
	\ncline[linewidth=1.3pt]{-}{B6}{C6}
	\ncline[linewidth=1.3pt]{-}{B7}{C7}
	\ncline[linewidth=1.3pt]{-}{B8}{C8}
	\ncline[linewidth=1.3pt]{-}{B9}{C9}
	\ncline[linewidth=1.3pt]{-}{B10}{C10}
	\ncline[linewidth=1.3pt]{-}{B11}{C11}
	\ncline[linewidth=1.3pt]{-}{B12}{C12}
	\ncline[linewidth=1.3pt]{-}{C1}{D1}
	\ncline[linewidth=1.3pt]{-}{C2}{D4}
	\ncline[linewidth=1.3pt]{-}{C3}{D3}
	\ncline[linewidth=1.3pt]{-}{C4}{D6}
	\ncline[linewidth=1.3pt]{-}{C5}{D7}
	\ncline[linewidth=1.3pt]{-}{C6}{D8}
	\ncline[linewidth=1.3pt]{-}{C7}{D10}
	\ncline[linewidth=1.3pt]{-}{C8}{D11}
	\ncline[linewidth=1.3pt]{-}{C9}{D12}
	\ncline[linewidth=1.3pt]{-}{C10}{D13}
	\ncline[linewidth=1.3pt]{-}{C11}{D15}
	\ncline[linewidth=1.3pt]{-}{C12}{D16}
	\ncline[linewidth=1.3pt]{-}{D1}{E1}
	\ncline[linewidth=1.3pt]{-}{D2}{E2}
	\ncline[linewidth=1.3pt]{-}{D3}{E3}
	\ncline[linewidth=1.3pt]{-}{D4}{E4}
	\ncline[linewidth=1.3pt]{-}{D5}{E5}
	\ncline[linewidth=1.3pt]{-}{D6}{E6}
	\ncline[linewidth=1.3pt]{-}{D7}{E7}
	\ncline[linewidth=1.3pt]{-}{D8}{E8}
	\ncline[linewidth=1.3pt]{-}{D9}{E9}
	\ncline[linewidth=1.3pt]{-}{D10}{E10}
	\ncline[linewidth=1.3pt]{-}{D11}{E11}
	\ncline[linewidth=1.3pt]{-}{D12}{E12}
	\ncline[linewidth=1.3pt]{-}{D13}{E13}
	\ncline[linewidth=1.3pt]{-}{D14}{E14}
	\ncline[linewidth=1.3pt]{-}{D15}{E15}
	\ncline[linewidth=1.3pt]{-}{D16}{E16}
	\uput{0.3}[180](-2.5,1.1){$\alpha_1$}
	\uput{0.3}[180](-2.5,0.1){$\alpha_2$}
	\uput{0.3}[180](-2.5,-1.9){$\alpha_3$}
	\uput{0.3}[180](-2.5,-0.9){$\xi^{(1,2)}$}
	\uput{0.3}[180](-2.5,-2.9){$\xi^{(1,2,3)}$}
\end{pspicture}
\setcaptionmargin{1cm}
\caption{Queues in tandem and multi-type configurations $\xi^{(1,2)}$ and $\xi^{(1,2,3)}$} \label{fig1}
\end{center}
\end{figure}

\begin{remark}\label{queueexplanation}
  The mechanism to construct an invariant distribution as described
  above can be depicted in the following way: Take $\alpha_1$ as the
  arrival process and $\alpha_2$ as the service process of a
  queue. Using $\alpha_1$ and $\alpha_2$ we can construct a process
  consisting of the departures from this queue (first class
  particles), unused services (second class particles) and
  times when no service was offered (holes). We then use this process as the
  arrival process for a queue with service process $\alpha_3$ where
  first class particles have priority over second class particles: If
  there is a service and a first and a second class particle are
  waiting in the queue then the first class particles gets served
  first. In this way we get a resulting process consisting of
  departures of first class particles (first class particles),
  departures of second class particles (second class particles),
  unused services (third class particles) and holes. Now we can feed
  this process into a queue with service process $\alpha_4$ and so
  on. If $\alpha = \left( \alpha_1 , \ldots , \alpha_n \right)$ has
  distribution $\nu = \nu_{\rho_1} \times \ldots \times \nu_{\rho_n}$
  ($\mu = \mu_{\rho_1} \times \ldots \times \mu_{\rho_n}$
  respectively) then the distribution of the resulting multi-type
  configuration is invariant for the multi-type TASEP. See Figure
  \ref{fig1} for an illustration. Note that for models R0, R1, R2,
the queues involved are simply $M/M/1$ queues in discrete-time;
the same is almost true for R3, 
except that we have different arrival and service rates at 
odd and even times.
\end{remark}

\begin{remark}
We observe again that the invariant measures for the multi-type TASEPs R1 and R2 are the same as the invariant measures for the multi-type TASEP in continuous time and that they do not depend on $\beta$. Since the invariant measures for the single-type TASEP R3 depend on $\beta$ the same is true for the invariant measures for the multi-type TASEP R3.
\end{remark}


\subsection{Hydrodynamic limits}
We now move to considering systems out of equilibrium. 
We consider the particular initial configuration given by 
$$\eta_0(x) =
\begin{cases}
	1 & x \leq 0 \\
	0 & x \geq 1
\end{cases}
$$
This ``step'' initial condition corresponds to the 
corner growth model and to the particular initial conditions
for the percolation models described in Section \ref{percrep}.
We define the following functions $f_0$,
$f_1$, $f_2$ and $f_3$, which will describe the 
evolving density profile for 
the continuous-time TASEP and for 
the TASEPs R1-R3 in discrete time:
\begin{align*}
&f_0(u) = \mathbbm{1}_{ \left. \left( -\infty , -1 \right. \right]}(u) + \frac{1}{2}(1-u) \cdot \mathbbm{1}_{\left[ -1 , 1 \right]}(u) \\
&f_1(u) = \mathbbm{1}_{ \left. \left( -\infty , -\frac{\beta}{1-\beta} \right. \right]}(u) + \frac{1}{\beta} \left( 1 - \sqrt{\frac{1-\beta}{1-u}} \right) \cdot \mathbbm{1}_{\left[ -\frac{\beta}{1-\beta} , \beta \right]}(u) \\
&f_2(u) = \mathbbm{1}_{ \left. \left( -\infty , - \beta \right. \right]}(u) + \left( 1 - \frac{1}{\beta} \left( 1 - \sqrt{\frac{1-\beta}{1+u}} \right) \right) \cdot \mathbbm{1}_{\left[ - \beta , \frac{\beta}{1-\beta} \right]}(u) = 1 - f_1(-u) \\
&f_3(u) = \mathbbm{1}_{ \left. \left( -\infty , - \frac{2 \beta}{2 - \beta} \right. \right]}(u) + \left( \frac{1}{2} - \frac{u}{\beta} \sqrt{ \frac{1 - \beta}{4 - u^2}} \right) \cdot \mathbbm{1}_{\left[ - \frac{2 \beta}{2 - \beta} , \frac{2 \beta}{2 - \beta} \right]}(u) \\
\text{We }
&\text{let } a_3 \text{ be defined by } f_3(u) = \frac{1}{2} \left( a_3(u) + \frac{a_3(u) (1-\beta)}{1-a_3(u)\beta} \right) \text{, so} \qquad \qquad \qquad \qquad \qquad \qquad \qquad \\
&a_3(u) = \mathbbm{1}_{ \left. \left( -\infty , - \frac{2 \beta}{2 - \beta} \right. \right]}(u) + \frac{1}{\beta} \left( 1 - \left( 2 + u \right) \sqrt{\frac{1-\beta}{4-u^2}} \right) \cdot \mathbbm{1}_{\left[ - \frac{2 \beta}{2 - \beta} , \frac{2 \beta}{2 - \beta} \right]}(u)
\end{align*}
For $i=0,1,2,3$ let $\tau_i(k,t)$ ($t \in \mathbb{R}_+$ or $t \in \mathbb{N}$ respectively) be the distribution of $(\eta_{t}(k+l), l \in \mathbb{Z})$ in the corresponding model. We have the following result for the TASEP in continuous time and the discrete TASEPs R1-R3.

\begin{theorem} \label{hydro} For any $u \in \mathbb{R}$ and $i=0,1,2$ the
  measure $\tau_i \left( \left[ ut \right] , t \right)$ converges
  weakly to the Bernoulli product measure with marginals $f_i(u)$ and
  $\tau_3 \left( \left[ ut \right] , t \right)$ converges weakly to
  the Bernoulli product measure with marginals $a_3(u)$ on even sites
  and $\frac{a_3(u) (1-\beta)}{1-a_3(u)\beta}$ on odd
  sites. In particular we have that for any $u \in \mathbb{R}$ the limit
  $\lim_{t \rightarrow \infty} \mathbb{E} \left[ \eta_t(k) \right]$
  exists and is equal to $f_i(u)$, $i = 0,1,2$ depending on which
  model we are considering, whenever $\frac{k}{t}$ tends to $u$, and
  $\lim_{t \rightarrow \infty} \mathbb{E} \left[
    \eta_t(2[\frac{k}{2}]) \right]$ $= a_3(u)$ and $\lim_{t
    \rightarrow \infty} \mathbb{E} \left[ \eta_t(2[\frac{k}{2}] + 1)
  \right]= \frac{a_3(u) (1-\beta)}{1-a_3(u)\beta}$ in the model
  R3. Furthermore, for $i=0,1,2,3$, the quantities $\frac{1}{t}
  \sum_{ut < k < vt} \eta_t(k)$ converge a.s. to the constant value
  $\int_{u}^{v} f_i(w) dw$, for $u < v$.
\end{theorem}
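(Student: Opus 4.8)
The plan is to read the macroscopic density profile off the limiting shape of the growth cluster via the last-passage representation of Section~\ref{percrep}, and then to upgrade the resulting convergence of one-point densities to full weak convergence to a product measure by a local-equilibrium argument resting on attractiveness and the classification of invariant measures in Proposition~\ref{invariant}. The first ingredient is a shape theorem: the superadditive structure behind \eqref{LPPgeom}, together with homogeneity, gives $\tT([xt],[yt])/t \to \tilde g(x,y)$ a.s.\ with $\tilde g(x,y)=\beta^{-1}\big[(1-\beta)(x+y)+2\sqrt{(1-\beta)xy}\,\big]$, and the deterministic edge contributions isolated in \eqref{T1}--\eqref{T3} then yield $T^{(i)}([xt],[yt])/t\to g_i(x,y)$ a.s., where $g_1=\tilde g+x$, $g_2=\tilde g+y$ and $g_3=\tilde g+\tfrac12(x+y)$ (the $O(1)$ and parity corrections being negligible at leading order).

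By \eqref{Gdef}, the rescaled clusters $t^{-1}G^{(i)}(t)$ converge to $\{(x,y)\ge 0:g_i(x,y)\le 1\}$, so the length of row $k=[yt]$ converges a.s.\ to $x_i^*(y):=\max\{x:g_i(x,y)\le 1\}$; since particle $k$ starts at $-k+1$, its position obeys $X_k(t)/t\to u_i(y):=x_i^*(y)-y$. The map $y\mapsto u_i(y)$ is strictly decreasing, and by the ordering of the particles the macroscopic density at $u=u_i(y)$ equals $1/|u_i'(y)|$. A direct computation of the level set --- for R1 one gets $(\sqrt{x}+\sqrt{(1-\beta)y}\,)^2=\beta$, hence $u_1(y)=\beta-\beta y-2\sqrt{\beta(1-\beta)y}$ --- identifies this density with $f_i(u)$ on the stated support, and with the values $1$ and $0$ on the two half-lines outside it. Because the shape theorem holds almost surely, the trajectory convergence is almost sure as well, and a change of variables $w=u_i(y)$ shows directly that $t^{-1}\sum_{ut<k<vt}\eta_t(k)\to\int_u^v f_i(w)\,dw$ a.s.

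It remains to promote this density convergence to weak convergence of $\tau_i([ut],t)$ to the Bernoulli product measure. Here I would follow the continuous-time scheme: the dynamics R1--R3 are attractive and admit the basic coupling (as noted in Section~2.3), and by Proposition~\ref{invariant} the only translation-invariant (even-shift-invariant for R3) stationary ergodic measures with constant marginals are the Bernoulli measures $\nu_\rho$ (resp.\ $\mu_\rho$). Coupling the step process with stationary processes whose densities bracket $f_i(u)$ and using that the limiting profile is a continuous rarefaction fan, one shows that along $k/t\to u$ the law near $[ut]$ converges to the extremal invariant measure of the local density, namely $\nu_{f_i(u)}$ for $i=0,1,2$. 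For R3 the same argument gives convergence to $\mu_{a_3(u)}$: the single density $f_3(u)$ furnished by the shape calculation is the average of the even and odd marginals, so inverting $f_3=\tfrac12\big(a_3+\tfrac{a_3(1-\beta)}{1-a_3\beta}\big)$ recovers the separate marginals $a_3(u)$ and $\tfrac{a_3(u)(1-\beta)}{1-a_3(u)\beta}$.

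I expect the local-equilibrium step to be the main obstacle. One must check that the attractiveness-plus-invariant-measures argument that yields local equilibrium in continuous time genuinely carries over to the discrete-time update rules, and in particular handle the sublattice structure of R3, where the argument has to be run on the even and odd sublattices simultaneously so that the single profile $f_3$ can be split correctly into its two marginals.
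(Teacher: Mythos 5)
Your first half is essentially the paper's route, just phrased through trajectories instead of counts: the paper follows Rost by proving a.s.\ convergence of $\frac1n S([un],n)$ to a convex function $h_i$ (Proposition \ref{convS3}), computes $h_i$ from the geometric last-passage asymptotics \eqref{LPPgeom} together with the edge-weight corrections \eqref{T1}--\eqref{T3}, and sets $f_i=-h_i'$; your cluster-shape/level-set computation carries exactly the same information ($h_i=u_i^{-1}$), and your R1 level set $(\sqrt{x}+\sqrt{(1-\beta)y})^2=\beta$ is correct. You also correctly isolate the special role of R3: the shape calculation only yields the even/odd average $f_3$, so the separate marginals $a_3$ and $\frac{a_3(1-\beta)}{1-a_3\beta}$ can only be recovered by first proving convergence to local equilibrium and then inverting $f_3=\frac12\bigl(a_3+\frac{a_3(1-\beta)}{1-a_3\beta}\bigr)$ --- this is precisely the paper's logic.

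The genuine gap is in your local-equilibrium step, and it is twofold. First, the bracketing coupling as you state it is not available: the step initial condition is not stochastically comparable with any Bernoulli product configuration (it is maximal to the left of the origin and minimal to the right), so the order preservation of the basic coupling cannot be invoked to sandwich the step process between stationary processes ``whose densities bracket $f_i(u)$''. Second, and more fundamentally, even once one knows (as the paper establishes via Proposition \ref{weakconv3}, following Rost) that every weak limit point of $\tau_i([ut],t)$ is a mixture $\int \tau_x\,\sigma(dx)$ of the invariant measures of Proposition \ref{invariant} with mean $\int x\,\sigma(dx)=f_i(u)$, the mean does not determine $\sigma$; ruling out a nondegenerate mixture is the heart of the matter, and ``the limiting profile is a continuous rarefaction fan'' is not a mechanism for it. The hydrodynamic law of large numbers controls windows of size $\epsilon t$, while weak convergence concerns fixed finite windows, and bridging these two scales is exactly what the paper's equicontinuity estimate (Proposition \ref{mu3ineq2}) combined with a Jensen's inequality argument (exploiting strict concavity) accomplishes, forcing $\sigma=\delta_{f_i(u)}$. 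Stochastic monotonicity of the law of $\eta_t(k+\cdot)$ in $k$ (which does hold for R0--R2, and is what the paper uses to get the pointwise limits $\mathbb{E}[\eta_t(k)]\to f_i(u)$ without local equilibrium; note your a.s.\ trajectory convergence by itself does not give these one-point expectation limits either) cannot replace this step: stochastic ordering of mixtures of Bernoulli product measures does not order the mixing measures, so no contradiction with strict monotonicity of $f_i$ arises. Your proof would be repaired by replacing the bracketing argument with the Rost-type chain: weak limits are invariant mixtures, plus equicontinuity, plus Jensen.
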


The first part of the theorem states convergence to local equilibrium:
suitably rescaled the models converge locally to the unique invariant
measures from Theorem \ref{invariant}. This implies the other
statements of the theorem. However, in the models R0, R1 and R2 we can
prove the second part without proving convergence to local equilibrium
first, while in the model R3 our proof for the second part requires
convergence to local equilibrium. The statements for the model in
continuous time were proved for the first time by Rost
\cite{rost}. O'Connell \cite{oconnell} used the connection between the
TASEP and last-passage percolation to prove an equivalent result about
the asymptotic shape of the corner growth model (as defined in Section
2). The parts of Theorem \ref{hydro} concerning the models in discrete
time can be proved using exactly the same methods as Rost \cite{rost}
and O'Connell \cite{oconnell}. In Section 4 we will outline the proof
for the model R3.

\begin{figure}[t]
\begin{center}
\includegraphics[width=6.5cm,height=13cm, angle=270]{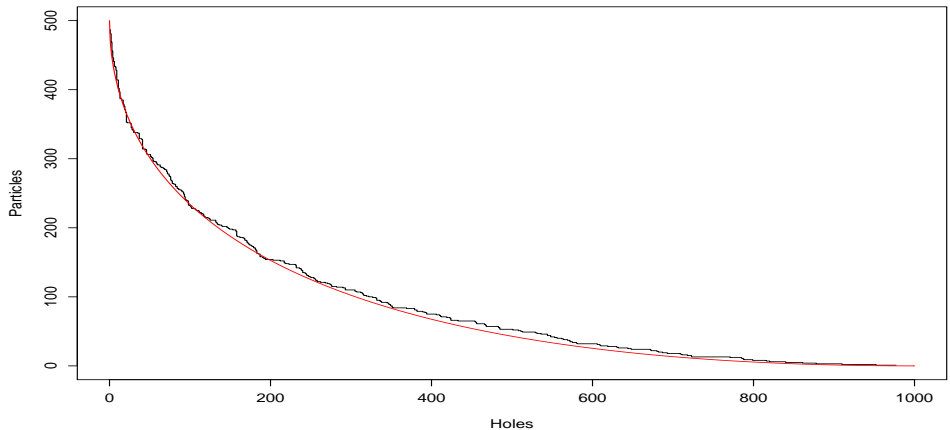}
\setcaptionmargin{1cm}
\caption{Black: Simulation of the corner growth model (R2) for $\beta=0.5$ and time $n=1000$; Red: Limiting shape given by a rescaled version of $g_2$} \label{simu1}
\end{center}
\end{figure}

\begin{remark} \label{shape}
From the convergence to the density profiles $f_0$, $f_1$,$f_2$ and $f_3$ we can easily deduce a shape theorem for the corner growth model defined in Section 2. The asymptotic shape in the models R1, R2 and R3 (after rescaling by $t$) are for example given by the functions
\begin{equation} \label{g1}
g_1(x) = \frac{1}{1-\beta} \left( \sqrt{\beta} - \sqrt{x} \right)^2
\end{equation}
for $x \in \left[ 0, \beta \right]$,
\begin{equation} \label{g2}
g_2(x) = \left( \sqrt{\beta} - \sqrt{(1-\beta)x} \right)^2
\end{equation}
for $x \in [ 0, \frac{\beta}{1-\beta} ]$, see Figure \ref{simu1} (simulation with $\beta=0.5$ up to time $n=1000$), and
\begin{equation} \label{g3}
g_3(x) = \frac{1}{(2-\beta)^2} \left( \sqrt{4x(1-\beta)} - \sqrt{4\beta - \beta^2x - 2\beta^2} \right)^2
\end{equation}
for $x \in \left[0 , \frac{2\beta}{2-\beta} \right]$.
\end{remark}


\bigskip

\begin{remark}
  We may rescale time as well as space in Theorem \ref{hydro},
  and look at the limit $f(u,t)=\lim_{N \rightarrow \infty} \mathbb{E}
  \left[ \eta_{Nt} (k) \right]$ for $\lim_{N \rightarrow \infty}
  \frac{k}{N} = u$. For the continuous-time model,
this density profile is governed by Burgers' equation, 
\begin{equation} \label{burger}
  \frac{\partial f}{\partial t} + \frac{\partial(f(1-f))}{\partial u} = 0; 
\end{equation}
the solution, with initial condition
$f(u,0) = \mathbbm{1}_{\left( \left. -\infty , 0 \right] \right.}(u),$
is 
$$ f(u,t) = \mathbbm{1}_{ \left. \left( -\infty , -t \right. \right]}(u) + \frac{t-u}{2t} \cdot \mathbbm{1}_{\left[ -t , t \right]}(u) $$
The differential equation also governs the evolution of the density profile
for more general initial configurations than the ``step'' initial
condition. We can get equations analogous to 
(\ref{burger}) for the
models in discrete time. For example, for model R1,
$$ 
f_1(u,t) = 
\mathbbm{1}_{ \left. \left( -\infty , -\frac{\beta t}{1-\beta} \right. \right]}(u) 
+ \frac{1}{\beta} \left( 1 - \sqrt{\frac{t(1-\beta)}{t-u}} \right) 
\cdot \mathbbm{1}_{\left[ -\frac{\beta t}{1-\beta} , \beta t \right]}(u)
$$
solves
\begin{equation} \label{burgerR1}
\begin{cases}
	\frac{\partial f_1}{\partial t} 
+ \frac{\partial}{\partial u}
\frac{\beta f_1(1-f_1)}{1-\beta f_1}
= 0 \\
	f_1(u,0) = \mathbbm{1}_{\left( \left. -\infty , 0 \right] \right.}(u)
\end{cases}
\end{equation}
Here $\frac{\beta f_1(1-f_1)}{1-\beta f_1} 
=\sum_{n=1}^{\infty}
\beta^n f_1^n(1-f_1)$ 
is the probability that a particle jumps from a
given site to its neighbour in a model in equilibrium with marginal
density $f_1$. 
\end{remark}

\subsection{Multi-type models out of equilibrium}\label{outof}

In this section we consider multi-type TASEPs $\xi_t \in
\mathbb{Z}^{\mathbb{Z}}$ similar to Section 2.3. With the results from
Theorem \ref{hydro} we can calculate the distribution of the
asymptotic speed of a single second class particle in the TASEP with
initial configuration
$$\xi_0(x) =
\begin{cases}
	1 & x \leq -1 \\
	2 & x = 0 \\
	3 & x \geq 1
\end{cases}
$$ As the particles of class 3 are weaker than all other particles in
the model we can think of these particles as holes. So the second-class particle
sees only particles to its left and only holes to its right. 
The second-class particle can be seen as a discrepancy between two 
copies of the ``step'' initial condition considered in the last section,
one of which is shifted by one step to the right. Hence the path of 
the second-class particle corresponds to the propagation of the discrepancy
under the basic coupling.
The results for
the models in discrete time correspond to the result for the model in
continuous time first obtained in Ferrari and Kipnis
\cite{ferrarikipnis}. They prove convergence in distribution. In order
to prove a.s. convergence we can use the connection to last-passage
percolation and the growth model: As in Ferrari and Pimentel
\cite{ferraripimentel} the path of the second class particle
corresponds to a competition interface in the growth model which has
a.s. an asymptotic direction.

\begin{theorem} \label{2CP}
For $i=0,1,2,3$ let $X^{(i)}(t)$ denote the position of the second class particle at time $t$ in the corresponding model. Then we have
$$ \frac{X^{(i)}(t)}{t} \xrightarrow[t \rightarrow \infty]{a.s.} U^{(i)} $$
for random variables $U^{(i)}$ with distribution functions $1-f_i$ for $i=0,1,2$ and $a_3$ for $i=3$.
\end{theorem}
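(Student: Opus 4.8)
The plan is to realize the second-class particle as the unique discrepancy between two coupled single-type TASEPs and then combine a convergence-in-distribution statement (in the spirit of Ferrari and Kipnis \cite{ferrarikipnis}) with an almost-sure statement extracted from the growth model (in the spirit of Ferrari and Pimentel \cite{ferraripimentel}). First I would fix the discrepancy representation. Let $\eta^1$ and $\eta^2$ be the single-type configurations coupled to $\xi$ by the basic coupling, with $\eta^1_0(x)=\mathbbm{1}[x\le -1]$ and $\eta^2_0(x)=\mathbbm{1}[x\le 0]$, so that $\eta^1_0\le\eta^2_0$ and the two agree everywhere except at the origin. Since the basic coupling preserves the ordering and there is a single up-discrepancy whose count is conserved, at every later time $\eta^1_t\le\eta^2_t$ and the two differ at exactly one site, the location $X^{(i)}(t)$ of the second-class particle; thus $\eta^2_t(y)-\eta^1_t(y)=\mathbbm{1}[y=X^{(i)}(t)]$. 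Summing over $y\le x$ gives
$$\mathbb{P}\bigl[X^{(i)}(t)\le x\bigr]=\sum_{y\le x}\bigl(\mathbb{E}[\eta^2_t(y)]-\mathbb{E}[\eta^1_t(y)]\bigr).$$
For the models R0, R1 and R2 the dynamics commute with the shift by one, and since $\eta^1_0(y)=\eta^2_0(y+1)$ we have $\mathbb{E}[\eta^1_t(y)]=\mathbb{E}[\eta^2_t(y+1)]$; the sum then telescopes to $1-\mathbb{E}[\eta^2_t(x+1)]$, using that the density tends to $1$ as $y\to-\infty$. Taking $x=[ut]$ and applying Theorem \ref{hydro} to the step profile of $\eta^2$ yields $\mathbb{P}[X^{(i)}(t)/t\le u]\to 1-f_i(u)$, i.e.\ convergence in distribution of $X^{(i)}(t)/t$ to a variable with distribution function $1-f_i$.

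The model R3 needs extra care in this first step. The shift by one is not a symmetry of R3 (only even shifts are), so the telescoping breaks down and the two profiles cannot simply be cancelled. Instead I would evaluate $\sum_{y\le x}(\mathbb{E}[\eta^2_t(y)]-\mathbb{E}[\eta^1_t(y)])$ using the even/odd-resolved limits of Theorem \ref{hydro}, where the step profile has density $a_3(u)$ on even sites and $\tfrac{a_3(u)(1-\beta)}{1-a_3(u)\beta}$ on odd sites, together with the convergence to local equilibrium that the proof of Theorem \ref{hydro} supplies for R3. Since the second-class particle starts on the even site $0$, the surviving boundary term of the telescoped sum is governed by the even-site density, and one obtains $\mathbb{P}[X^{(3)}(t)/t\le u]\to a_3(u)$, matching the claimed distribution function.

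To upgrade convergence in distribution to almost-sure convergence I would pass to the last-passage percolation picture of Section \ref{percrep}, with the deterministic edge weights appropriate to each rule R$i$. The trajectory of the second-class particle corresponds to the \emph{competition interface} in the growth model $G^{(i)}$: each newly occupied vertex $(n,k)$ is attributed, via the $\max$ in the passage-time recursion, to a geodesic passing through either $(2,1)$ or $(1,2)$, and the interface separating the two resulting clusters is the image of the particle's path. Following Ferrari and Pimentel \cite{ferraripimentel}, this interface has an almost-sure asymptotic direction, which forces $X^{(i)}(t)/t$ to converge almost surely to some random variable $U^{(i)}$. Finally, a sequence that converges almost surely and also converges in distribution must have an almost-sure limit whose law is the distributional limit; hence $U^{(i)}$ has distribution function $1-f_i$ for $i=0,1,2$ and $a_3$ for $i=3$, as found in the first step.

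The hard part is the almost-sure asymptotic direction of the competition interface. Ferrari and Pimentel's argument rests on strict concavity and differentiability of the limit shape together with coalescence and a.s.\ uniqueness of geodesics; to transfer it to the discrete-time models one must verify these properties for the geometric last-passage model carrying the extra deterministic edge weights of R1, R2 and R3. The limit shapes are known explicitly — the functions $g_1$, $g_2$, $g_3$ of Remark \ref{shape} and the exponential shape (\ref{LPPexp}) for R0 — and are strictly concave, so the geometric input should go through, but checking the geodesic structure for the modified weights, and handling the parity bookkeeping for R3 in the first step, is where the real work lies.
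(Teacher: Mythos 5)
Your overall strategy is exactly the paper's: the Ferrari--Kipnis two-process argument \cite{ferrarikipnis} for convergence in distribution, then the Ferrari--Pimentel competition-interface argument \cite{ferraripimentel} to upgrade to almost-sure convergence, with the law of the a.s.\ limit identified by the distributional limit. For R0, R1 and R2 your first step is correct: the identity $\mathbb{P}[X^{(i)}(t)\le x]=1-\mathbb{E}[\eta^2_t(x+1)]$, obtained from shift-equivariance of the dynamics plus telescoping, is precisely what the paper extracts from its particle--particle coupling, and Theorem \ref{hydro} then gives the limit $1-f_i(u)$. For the almost-sure part you have the right plan but you defer all the work; be aware that what remains is not only strict concavity of the limit shape. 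The paper must also modify the percolation model to encode the hole--particle pair representation of the second-class particle (removing the weight at $(1,1)$ and an edge weight for R1), must break ties in the interface recursion (\ref{phi1}) (ties occur with positive probability for geometric weights, unlike in the exponential case), and for R3 must pass to a half-time-step formulation with an extra $+\frac{1}{2}$ weight on the edge from $(1,1)$ to $(1,2)$, which simultaneously eliminates ties and repairs the parity shift created by inserting the extra site.

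The genuine gap is your R3 argument in the distributional step. Having correctly observed that the one-site shift is not a symmetry of R3, you cannot then ``evaluate'' $\sum_{y\le x}\bigl(\mathbb{E}[\eta^2_t(y)]-\mathbb{E}[\eta^1_t(y)]\bigr)$ from Theorem \ref{hydro}: with $x\approx ut$ this sum has order $t$ terms, each difference is $o(1)$, and term-by-term use of the hydrodynamic (or local-equilibrium) limits of the two profiles gives no control over an order-one sum of vanishing differences. Moreover, once telescoping is gone there is no ``surviving boundary term''; that phrase contradicts your own (correct) observation that the cancellation fails. What is needed is an exact identity replacing shift-equivariance, and this is what the paper constructs: shifting by one site maps R3 into R4 (odd-first updates), so under the particle--particle coupling the shifted process $\eta^1$ evolves as an R4 process; since in $\eta^1_0$ no particle on an even site has an empty right neighbour, the first even update acts trivially, and hence the R3 and R4 evolutions of $\eta^1_0$ over $t$ steps differ only by a single extra even update at the end of the horizon; an even update can change the number of particles strictly above level $x$ only when $x$ is even, so at odd levels $x=2[rt/2]+1$ the coupling identity survives and yields $\mathbb{P}[X^{(3)}(t)>2[rt/2]+1]=\mathbb{P}[\eta^2_t(2[rt/2]+2)=1]$, whose limit $a_3(r)$ follows from convergence to local equilibrium at the \emph{even} site $2[rt/2]+2$; finally a monotonicity sandwich in the level recovers the limit along all sequences. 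Without this device (or an equivalent one), your claimed limit $a_3(u)$ for R3 is an assertion, not a proof.
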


\begin{remark} \label{dirR1R3}
  The proofs for convergence in distribution are analogous to those
  for the model in continuous time (see for example
  \cite{ferrarikipnis}) apart from a small complication in the model
  R3 with the \textit{particle-particle coupling}. We will give an
  account of this proof in Section 4. For the almost sure
  convergence we will explain the construction of the competition
  interface for the models in discrete time and prove that the second
  class particle has almost sure an asymptotic speed by using results
  about semi-infinite geodesics in the percolation models similar to
  \cite{ferraripimentel}. An interesting observation will be that the
  distribution of the asymptotic direction of the competition interface
  corresponding to the path of the second class particle is the same
  in the models R1 and R3, see Remark \ref{proofdirR1R3}.  
However, this does not imply that
  the distribution of the speed of the second class particle is the same 
in the two models.
\end{remark}

\begin{remark}
In the continuous model the distribution of the asymptotic speed of
the second class particle turns out to be uniform on $[-1,1]$. The
distributions in the models in discrete time are more complicated.
\end{remark}

\bigskip

Using Theorem \ref{2CP} we can define the following so-called speed
process: Consider the multi-type TASEP with initial configuration $\xi_0(n) =
n$. By Theorem \ref{2CP} we know that each particle has a.s.\ an
asymptotic speed $U_n$: Particle $n$ has only stronger particles to
its left and only weaker particles (that can be seen as holes) to
its right just like the second class particle in the initial
configuration of Theorem \ref{2CP} and therefore we can apply Theorem
\ref{2CP} to the speed of every particle. We call the process $\left\{ U_n \right\}_{n
  \in \mathbb{Z}}$ the speed process and denote its distribution by
$\mu$. 
This process is stationary, and its marginals (for the various models) 
are given by the distributions in Theorem \ref{2CP}. 
Furthermore, we write $Y_n(m)$ instead of $\xi_m(n)$ and
denote the position of particle $n$ at time $m$ by $X_n(m)$ in order
to be consistent with the notation introduced by Amir, Angel and
Valk\'{o} \cite{amirangelvalko}. They have studied the speed process
for the model in continuous time. 
Note that both $Y_n$ and $X_n$ can be seen as permutations
of the set $\ZZ$, and are inverse to each other.
We define $g_i = 1 -
f_i$ for $i=0,1,2$ and $g_3 = 1 - a_3$, $g_4 = 1 - \frac{a_3 (1-\beta)}{1-a_3\beta}$
and our first result is the following theorem corresponding to Theorem
1.5 in \cite{amirangelvalko} (note that the labels of particles can
now be in $\mathbb{R}$ instead of just $\mathbb{Z}$):

\begin{theorem} \label{duality} For $i=0,1,2$, $\mu^{(i)}$, the
  distribution of the speed process in model R$i$, is the unique
  stationary ergodic measure for the TASEP R0 whose marginals
  have distribution function $g_i$. 
Correspondingly, for $i=3,4$, $\mu^{(i)}$ is the
  unique stationary measure for the TASEP R$j(i)$, which has marginals
  distributed according to $g_{i}$ on even sites and $g_{j(i)}$ on odd
  sites, where $j(3) = 4$ and $j(4) = 3$.
\end{theorem}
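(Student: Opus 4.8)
The plan is to follow the strategy of Amir, Angel and Valk\'o for their Theorem 1.5, adapting it to the discrete-time dynamics and paying attention to the parity structure that distinguishes R3 and R4. There are three things to establish: that the one-dimensional marginals of $\mu^{(i)}$ are the claimed distribution functions; that $\mu^{(i)}$ is invariant under the appropriate TASEP dynamics; and that it is the unique such stationary (translation- or even-shift-invariant) ergodic measure. The marginals are immediate from Theorem~\ref{2CP}: particle $n$ started at site $n$ sees only higher-priority particles to its left and only lower-priority particles (effectively holes) to its right, so $U_n$ is distributed exactly as the second-class-particle speed, namely with distribution function $g_i$ for $i=0,1,2$; for R3 one must separate the cases $n$ even and $n$ odd, which produces marginals $g_3$ on even sites and $g_4$ on odd sites (and the reverse for R4). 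Translation invariance (even-shift invariance for R3, R4) of the law of $(U_n)_n$ follows from the covariance of the initial data $\xi_0(n)=n$ under the relabelling $n\mapsto n+1$.

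The heart of the proof is invariance under the dynamics, and here the key device is the duality between reading speeds by label and by position. Recall that $Y_n(m)$ and $X_n(m)$ are mutually inverse permutations of $\ZZ$. I would introduce the speed-at-site process $\Phi_x(m)=U_{Y_x(m)}$, attaching to each site the asymptotic speed of the particle occupying it, and drive it with the same clocks that generate $\xi$. The first step is a \emph{speed-ordering lemma}: although at a fixed finite time the class-based swap rule need not respect the order of the asymptotic speeds, after a large time $T$ the relative order of any two nearby particles has stabilised to the order of their speeds, so that on bounded windows the evolution of $\Phi$ agrees, in the limit, with a multi-type TASEP in which the speed itself plays the role of the class. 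This local stabilisation is exactly what the almost-sure convergence of Theorem~\ref{2CP}, together with the monotonicity of the basic coupling, provides. The second step is the distributional identity $\Phi_\cdot(m)\stackrel{d}{=}(U_x)_x$ for every $m$; together with the first step this exhibits $(U_n)_n$ as a stationary configuration for a genuine multi-type TASEP.

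It then remains to identify which model's dynamics acts on the speed process. For $i=0,1,2$ the speeds are genuine continuous asymptotic velocities, a macroscopic object insensitive to the discreteness of time; the effective dynamics on this continuum of classes is the continuous-time process R0. This is consistent with the hydrodynamic picture of Theorem~\ref{hydro} and with the fact, noted after the definition of the models, that R1 and R2 converge to R0 under the rescaling of time by $\beta^{-1}$. By contrast, for R3 the spatial even/odd asymmetry survives in the limit (the invariant densities at even and odd sites differ, as in Proposition~\ref{invariant}), so the speed process retains a parity structure and is stationary for a parity-type dynamics. Because the speed process encodes the late-time geometry of the system and effectively reverses the roles of space and time, passing to it interchanges the order in which even and odd sites are updated; this is precisely the exchange $j(3)=4$, $j(4)=3$, and it dictates the placement of $g_3$ on even sites and $g_4$ on odd sites (and the reverse for R4).

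Finally, uniqueness and ergodicity follow from Proposition~\ref{invariant} and Theorem~\ref{invariantmulti}. Since the classes are now real-valued, I would reduce to finitely many classes by thresholding the speeds at finitely many levels, apply the uniqueness statement of Theorem~\ref{invariantmulti} to each coarse-graining, and pass to the limit; ergodicity of $(U_n)_n$ under shifts comes from the fact that speeds of widely separated labels are determined by asymptotically disjoint regions of the driving noise in the last-passage representation of Section~\ref{percrep}. The main obstacle is the speed-ordering lemma together with the distributional identity $\Phi_\cdot(m)\stackrel{d}{=}(U_x)_x$: this is the substantive content of the Amir--Angel--Valk\'o argument, and in the discrete-time setting it must be carried out with enough care to track the update order and thereby produce the duality between R3 and R4 rather than simple self-invariance.
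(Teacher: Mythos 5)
Your outline gets the marginals right (via Theorem \ref{2CP}) and the uniqueness step right (thresholding the speeds into finitely many classes and invoking Theorem \ref{invariantmulti}; this is essentially Lemma \ref{queue} in the paper), but the core of the theorem --- stationarity --- rests on two steps that do not hold up. First, your ``speed-ordering lemma'' plus ``agreement in the limit on bounded windows'' cannot deliver an exact distributional identity: the evolution of $\Phi_x(m)=U_{Y_x(m)}$ at any finite time is governed by the label order $Y_x(m)$ versus $Y_{x+1}(m)$, not by the speed order, and no amount of eventual stabilisation converts approximate agreement into the exact identity $\Phi_\cdot(m)\stackrel{d}{=}(U_x)_x$; indeed that identity is essentially the statement being proved, and you assert it as ``the second step'' without offering a mechanism. (The stabilisation claim is itself delicate: $\mathbb{P}\left[U_0=U_1\right]>0$, and when speeds are equal the eventual order is decided by overtaking, not by the speeds; the paper explicitly omits the discrete-time proof of that overtaking result because the calculations are substantially harder, and logically the stationarity of the speed process is an \emph{input} to it, not a consequence.) The actual argument, following Amir--Angel--Valk\'o, runs through entirely different machinery: the sorting operators $\sigma_n$, Lemma \ref{sigma} (the identity $\sigma_{i_k}\cdots\sigma_{i_1}\stackrel{d}{=}\left(\sigma_{i_1}\cdots\sigma_{i_k}\right)^{-1}$), and Lemma \ref{speedshift} (prepending $\sigma_j\cdots\sigma_{j+k}$ to the initial condition acts on the speed process by the reversed product $\sigma_{j+k}\cdots\sigma_j$), combined with a shift of the driving Bernoulli field in time: the speed process built from the noise shifted by $l$ has the same law $\mu^{(i)}$ for every $l$, and passing from $l$ to $l+1$ prepends one time-step of R$i$ updates, which by the two lemmas acts on the speed process as one time-step of the \emph{reversed-order} dynamics. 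That is where stationarity comes from, and it comes out under the dual rule R$j(i)$, not under R$i$ itself.

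Second, your identification of the dynamics for $i=0,1,2$ --- that the speeds are ``macroscopic objects insensitive to the discreteness of time'' so the effective dynamics is R0 --- is not an argument, and it obscures what is actually true: the shift construction shows that $\mu^{(1)}$ is stationary for the discrete-time TASEP R2 (and $\mu^{(2)}$ for R1). The statement about R0 in the theorem holds only because, by Theorem \ref{invariantmulti}, the models R0, R1 and R2 have the same set of stationary translation-invariant ergodic measures for every value of $\beta$; this transfer step is indispensable and is absent from your proposal. (For R3/R4 your remark that passing to the speed process ``interchanges the order in which even and odd sites are updated'' is the right intuition, but it is precisely the operator-reversal of Lemma \ref{sigma}; without that machinery it remains a heuristic rather than a proof.)
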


  For $i=0$ this gives the result from \cite{amirangelvalko} saying
  that the distribution of the speed process is itself a stationary
  ergodic measure for the TASEP in continuous time (the marginals are
  uniform on $\left[ -1,1 \right]$ in this case). The other parts of
  Theorem \ref{duality} follow from nice dualities between the models R1 and
  R2 and between the models R3 and R4, and the fact that R0, R1 and R2 
all have the same set of stationary distributions, whatever the value of $\beta$
(as given in Theorem \ref{invariantmulti}). 
The dualities are given by the following result: 

\begin{theorem} \label{duality2} Consider the starting configuration
  $Y_n(0) = n$. For $i=0,1,2,3,4$ and any fixed $m > 0$ the process
  $\{ X_{n}^{(i)}(m) \}_{n \in \mathbb{Z}}$ has the same distribution
  as the process $\{ Y_{n}^{(j(i))}(m) \}_{n \in \mathbb{Z}}$ where
$j(0)=0$, $j(1)=2$, $j(2)=1$, $j(3)=4$ and $j(4)=3$.
\end{theorem}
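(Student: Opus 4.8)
The plan is to realize the entire time-$m$ configuration started from $\xi_0(n)=n$ as two different readings of a single last-passage percolation environment, and then to interchange the two readings by transposing that environment. Recall from Section~\ref{percrep} that the jump times of the single-type step-initial TASEP under rule R$i$ are the passage times of a percolation model whose bulk weights are i.i.d.\ (exponential for R0, geometric for R1--R4) and which carries deterministic unit edge-weights encoding the update rule: on the horizontal edges for R1, on the vertical edges for R2, on the edges whose upper/right endpoint $(n,k)$ has $n+k$ even for R3 (and $n+k$ odd for R4), and none for R0. Under basic coupling the initial condition $\xi_0(n)=n$ is the simultaneous realisation of all threshold processes $\eta^{(k)}$, each a shifted step-initial TASEP, driven by one common environment; so the whole environment lives on $\mathbb{Z}^2$, and the pair $(X_\cdot(m),Y_\cdot(m))$ of mutually inverse permutations is a deterministic functional of it. Here $X_n(m)$, the location of the class-$n$ discrepancy between $\eta^{(n)}$ and $\eta^{(n-1)}$, is the ``particle-position'' reading, while $Y_n(m)=\min\{k:\eta^{(k)}_m(n)=1\}$ is the ``occupation'' reading. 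Since equality in distribution of the two processes means equality of all finite-dimensional marginals, and the time-$m$ configuration over any finite window has an a.s.\ finite range of dependence, I may work throughout with a finite environment.

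The symmetry I would use is the lattice transpose $(n,k)\mapsto(k,n)$ composed with a unit shift; call it $\Phi$. Three facts drive the argument. First, $\Phi$ preserves the law of the i.i.d.\ bulk field, by exchangeability of the weights. Second, $\Phi$ sends the deterministic edge pattern of R$i$ to that of R$j(i)$: the transpose interchanges horizontal and vertical edges, giving R1$\leftrightarrow$R2 and fixing R0, while it preserves the parity of $n+k$, after which the unit shift flips that parity, giving R3$\leftrightarrow$R4. Third, transposing the two coordinate axes is exactly the operation (Young-diagram conjugation, exchanging row lengths with column lengths) that turns the particle-position reading of one environment into the occupation reading of its image. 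Combining the three, the environment for R$i$ pushed forward by $\Phi$ is distributed as the environment for R$j(i)$, and the position process of the former coincides with the label process of the latter; hence $\{X^{(i)}_n(m)\}_{n\in\mathbb{Z}}\stackrel{d}{=}\{Y^{(j(i))}_n(m)\}_{n\in\mathbb{Z}}$, as claimed.

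I expect the main obstacle to be the two geometric identifications behind the second and third facts. Section~\ref{percrep} sets up only the one-sided step initial condition, so the first task is to extend the percolation correspondence to the bi-infinite initial condition $\xi_0(n)=n$, using the reduction to finite environments above to sidestep any question of well-definedness of infinite passage times. The genuinely delicate point is then to verify precisely that $\Phi$ interchanges the particle-position and occupation readings while \emph{simultaneously} carrying R$i$'s edge pattern to R$j(i)$'s; in particular one must pin down the unit shift that distinguishes R3$\leftrightarrow$R4 from a naive self-duality, since the bare transpose fixes the $n+k$-parity set. As a consistency check I would compare against the elementary symmetries that preserve the initial condition: the reflection $x\mapsto -x$, $k\mapsto -k$ (mapping R0 to R0 and R1 to R2) and the unit space shift $x\mapsto x+1$ (mapping R3 to R4); combined with the theorem these predict self-dualities of the form $\{X^{(i)}_n(m)\}\stackrel{d}{=}\{-Y^{(i)}_{-n}(m)\}$ for $i=0,1,2$ and a shifted self-duality for R3, and matching these against the transpose computation guards against sign and parity errors.
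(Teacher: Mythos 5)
Your ``fact 1'' and ``fact 2'' are correct, but only as statements about the \emph{single-type} percolation models of Section \ref{percrep}. Since $\{\tT(n,k)\}$ is built from an i.i.d.\ field we indeed have $\{\tT(n,k)\}_{n,k}\overset{d}{=}\{\tT(k,n)\}_{n,k}$, and combining this with $T^{(1)}(n,k)=\tT(n,k)+n-1$ and $T^{(2)}(n,k)=\tT(n,k)+k-1$ gives $\{T^{(1)}(n,k)\}_{n,k}\overset{d}{=}\{T^{(2)}(k,n)\}_{n,k}$: the transpose exchanges the horizontal and vertical unit edge-weights exactly as you say. Via conjugation of the growth cluster this yields particle--hole duality for \emph{one} step-initial TASEP (and, with your parity shift, the analogous one-colour statement for R3/R4). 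All of this concerns a single threshold process.

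The gap is your ``fact 3'' together with the opening reduction, i.e.\ precisely the step you flagged as delicate: it is not merely delicate, it fails as formulated. The label process $Y_n(m)=\xi_m(n)$ is \emph{not} the occupation reading of a single growth cluster; to determine it you need all threshold processes $\eta^{(k)}$, $k\in\ZZ$, simultaneously. The object of which $(X_\cdot(m),Y_\cdot(m))$ is a deterministic functional is the driving Bernoulli field in site$\times$time coordinates, and there transposition is not a distributional symmetry of anything (it mixes space and time). The fields on which transposition \emph{is} a symmetry are the LPP weight fields $w^{(k)}(n,j)$, indexed by jump number and particle label --- but there is one such field \emph{per colour} $k$, each read off from the driving field in a dynamics-dependent way, and the multi-type structure sits entirely in the joint law of this coupled family, which is not an i.i.d.\ field and to which exchangeability does not apply. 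So your argument proves each one-colour marginal identity, whereas Theorem \ref{duality2} asserts equality in law of the full processes $\{X_n(m)\}_n$ and $\{Y_n(m)\}_n$, a joint-over-all-colours statement; marginal equalities do not imply the joint one, and proving transposition invariance of the coupled coloured family is essentially equivalent to the theorem itself. The paper's route avoids this entirely by staying in the permutation picture: a time-step of the multi-type dynamics is a finite ordered product of sorting operators $\sigma_i$ (finiteness because sites with no update attempt cut $\ZZ$ into independent finite blocks), Lemma \ref{sigma} says such a product is distributed as the inverse of the product taken in reversed order, reversing the processing order turns R1 into R2 (R3 into R4, R0 into itself), and since $X(m)=Y(m)^{-1}$ the theorem follows by induction over time-steps. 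Any repair of your approach would, in effect, have to re-prove that lemma.
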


The following theorems provide some explicit results about the joint
distributions of the speeds of adjacent particles (and particles $0$
and $2$ in model R3). The first result is Theorem 1.7 in
\cite{amirangelvalko}. The remaining theorems and remarks give
analogous results for the TASEPs R1, R2 and R3.

\begin{theorem}[TASEP R0] \label{jointcont}
The joint distribution of $\left( U_0 , U_1 \right)$, supported on $\left[ -1 , 1 \right]^2$, is
$$ s(x,y) dx dy + r(x) \mathbbm{1}_{\left\{ x = y \right\}} dx $$
with
$$
s(x,y) =
\begin{cases}
	\frac{1}{4} & x > y \\
	\frac{y-x}{4} & x \leq y
\end{cases}
\qquad \text{and} \qquad r(x) = \frac{1-x^2}{8} $$
In particular, $\mathbb{P} \left[ U_0 > U_1 \right] = \frac{1}{2}$, $\mathbb{P} \left[ U_0 = U_1 \right] = \frac{1}{6}$ and $\mathbb{P} \left[ U_0 < U_1 \right] = \frac{1}{3}$.
\end{theorem}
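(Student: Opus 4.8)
The plan is to use Theorem~\ref{duality}, which identifies $\mu^{(0)}$, the law of the speed process $(U_n)_{n\in\ZZ}$, as the unique stationary ergodic measure for TASEP R0 with uniform marginals on $[-1,1]$; the task then reduces to computing the two-point marginal of this stationary measure at the adjacent sites $0$ and $1$. As a first sanity check, recall from Theorem~\ref{2CP} (with $f_0(u)=\tfrac12(1-u)$) that each $U_n$ is uniform on $[-1,1]$, and observe that, for any threshold $u$, the level set $\{n:U_n\le u\}$ is the projection of $\mu^{(0)}$ onto a single-type TASEP and is therefore stationary, translation invariant, ergodic and of constant marginal density $\tfrac12(1+u)$; by Proposition~\ref{invariant} it must be the Bernoulli product measure $\nu_{(1+u)/2}$. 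In particular, at a single threshold the occupations of sites $0$ and $1$ are independent, so $\mathbb{P}(U_0>u\ge U_1)=\tfrac14(1-u^2)$. This already constrains the answer, but since it is only one function of one variable it cannot by itself determine the two-variable density $s$ together with the diagonal term $r$; the genuine content lies in the correlations across different thresholds.

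To capture those correlations I would use the multi-line queueing construction of Theorem~\ref{invariantmulti}. Fix two thresholds $u<v$ and record, at each site, whether its speed lies in $(-\infty,u]$, $(u,v]$ or $(v,\infty)$; by the level-set observation above this three-class configuration is exactly $\xi^{(1,2)}=RT(\alpha_1,\alpha_2)$ with $\alpha_1\sim\nu_{(1+u)/2}$ and $\alpha_2\sim\nu_{(1+v)/2}$ independent, i.e.\ the departures, unused services and no-service slots of a single tandem queue. The two-point function $\mathbb{P}\bigl(\xi^{(1,2)}(0)=a,\,\xi^{(1,2)}(1)=b\bigr)$ of this three-class measure is a concrete object, computable from the stationary behaviour of the queue. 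Differentiating in $u$ and $v$ and letting the interval $(u,v]$ shrink then recovers the joint law of $(U_0,U_1)$: the cross terms, with $U_0$ and $U_1$ in \emph{different} infinitesimal intervals, give the absolutely continuous density $s(x,y)$, while the probability that both neighbouring sites fall in the \emph{same} infinitesimal interval does not vanish and produces the singular term $r(x)\,\mathbbm{1}_{\{x=y\}}$. This atom is precisely the positive probability that two adjacent particles travel at identical speeds, i.e.\ the coalescence of two second-class particles sharing a characteristic in the rarefaction fan, consistent with the second-class-particle picture behind Theorem~\ref{2CP}.

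Once $s$ and $r$ are in hand, the remaining assertions are routine integrations: $\int_{-1}^{1}r(x)\,dx=\tfrac16$, $\iint_{x>y}s=\tfrac12$ and $\iint_{x<y}s=\tfrac13$; as a final check one verifies that the marginal $\int s(x,y)\,dy+r(x)$ collapses to the constant $\tfrac12$, recovering the uniform law of $U_0$. The main obstacle is the middle step: although the three-class measure is explicit, the tandem queue introduces long-range spatial correlations, so extracting the exact continuum densities from the finite-class two-point function — and in particular correctly isolating the singular diagonal contribution rather than absorbing it into the absolutely continuous part — is the delicate, computational heart of the argument (this is Theorem~1.7 of \cite{amirangelvalko}).
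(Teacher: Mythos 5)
Your proposal is correct and follows essentially the same route as the paper: the paper does not prove this R0 statement itself but quotes it as Theorem 1.7 of \cite{amirangelvalko}, and the strategy you describe (discretize the speeds into three classes, identify the resulting configuration with the queue-built 3-type stationary measure via Theorem \ref{duality}, Lemma \ref{queue} and Theorem \ref{invariantmulti}, compute its two-point function, then differentiate in the thresholds and isolate the diagonal atom) is exactly the method the paper carries out for its discrete-time analogue, Theorem \ref{jointR1}. The only step you overestimate is the "delicate computational heart": for adjacent sites the queue two-point probabilities are one-line formulas, $\mathbb{P}\left[ V_0=1, V_1=2 \right] = x_1 x_2 (x_2-x_1)$, $\mathbb{P}\left[ V_0=3, V_1=2 \right] = (1-x_2)(x_2-x_1)$ and $\mathbb{P}\left[ V_0=2, V_1=2 \right] = (1-x_1) x_2 (x_2-x_1)$, from which, with $g_0(u)=\tfrac{1+u}{2}$, differentiation and the shrinking-interval limit give precisely $s(x,y)=\tfrac14$ for $x>y$, $s(x,y)=\tfrac{y-x}{4}$ for $x\leq y$ and $r(x)=\tfrac{1-x^2}{8}$, so no long-range correlation analysis is needed.
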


\begin{theorem}[TASEP R1] \label{jointR1}
The joint distribution of $\left( U_0, U_1 \right)$ has support on $\left[ - \frac{\beta}{1-\beta} , \beta \right]^2$ and is given by
$$ s_1(x,y) dx dy + r_1(x) \mathbbm{1}_{\left\{ x = y \right\}} dx $$
with
$$ s_1(x,y) =
\begin{cases}
	\frac{1-\beta}{4\beta^2} \left( 1 - x \right)^{-\frac{3}{2}} \left( 1 - y \right)^{-\frac{3}{2}} = g_1^{'}(x)g_1^{'}(y) & x>y \\
	\frac{1-\beta}{2\beta^3} \left( 1 - x \right)^{-\frac{3}{2}} \left( 1 - y \right)^{-\frac{3}{2}} \left( \sqrt{\frac{1-\beta}{1-y}} - \sqrt{\frac{1-\beta}{1-x}} \right) & x \leq y
\end{cases}
$$
and
\begin{center}
$r_1(x) = \left( \frac{\sqrt{1-\beta}}{2 \beta^2 \left( 1 - u \right)^{\frac{3}{2}}} \left( 1 - \frac{1}{\beta} \right) + \frac{1-\beta}{2 \beta^2 \left( 1 - u \right)^{2}} \left( \frac{2}{\beta} - 1 \right) - \frac{\sqrt{1-\beta}\left( 1 - \beta \right)}{2 \beta^3 \left( 1 - u \right)^{\frac{5}{2}}} \right)$
\end{center}
In particular, $\mathbb{P} \left[ U_0 > U_1 \right] = \frac{1}{2}$, $\mathbb{P} \left[ U_0 = U_1 \right] = \frac{1}{6}$ and $\mathbb{P} \left[ U_0 < U_1 \right] = \frac{1}{3}$.
\end{theorem}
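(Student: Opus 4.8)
The plan is to deduce the R1 result from the continuous-time result (Theorem \ref{jointcont}) by a monotone relabelling of particle classes, using the identification of speed processes with stationary measures for the TASEP R0 provided by Theorem \ref{duality}. The key observation is that the multi-type TASEP R0 dynamics depend on the particle classes only through their relative order: under the basic coupling a class-$c$ particle yields to a class-$c'$ particle precisely when $c>c'$. Consequently, if $\phi:\mathbb{R}\to\mathbb{R}$ is any strictly increasing function and we relabel every class $c$ by $\phi(c)$, the relabelled configuration evolves under exactly the same dynamics; hence the coordinate-wise pushforward $\phi_*$ maps stationary ergodic measures for R0 to stationary ergodic measures for R0, changing only the common marginal law.

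First I would introduce the monotone transport map $\phi=g_1^{-1}\circ g_0$ between the two marginal laws, characterised by $g_1(\phi(a))=g_0(a)$; since $g_0$ and $g_1$ are continuous and strictly increasing on their respective support intervals (with $g_0$ the uniform CDF on $[-1,1]$ and $g_1=1-f_1$ the CDF from Theorem \ref{2CP}), $\phi$ is a strictly increasing bijection from $[-1,1]$ onto $[-\tfrac{\beta}{1-\beta},\beta]$. By Theorem \ref{duality}, $\mu^{(0)}$ and $\mu^{(1)}$ are the unique stationary ergodic measures for the TASEP R0 with marginals $g_0$ and $g_1$ respectively. Applying $\phi$ coordinate-wise to the R0 speed process produces a stationary ergodic R0 measure whose marginals are $g_1$; by the uniqueness in Theorem \ref{duality} it must coincide with $\mu^{(1)}$, so that
$$\left(U_0^{(1)},U_1^{(1)}\right)\stackrel{d}{=}\left(\phi\bigl(U_0^{(0)}\bigr),\phi\bigl(U_1^{(0)}\bigr)\right).$$

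It then remains to transport the explicit law of Theorem \ref{jointcont} through the map $(a,b)\mapsto(\phi(a),\phi(b))$. The absolutely continuous part pushes forward with the two-dimensional Jacobian and the diagonal part with the one-dimensional Jacobian; using $\phi^{-1}(x)=2g_1(x)-1$ and $(\phi^{-1})'(x)=2g_1'(x)$ (which follow from $g_0(u)=\tfrac{1}{2}(1+u)$), one finds
$$s_1(x,y)=g_1'(x)g_1'(y)\quad(x>y),\qquad s_1(x,y)=2\bigl(g_1(y)-g_1(x)\bigr)g_1'(x)g_1'(y)\quad(x\le y),$$
together with $r_1(x)=g_1(x)\bigl(1-g_1(x)\bigr)g_1'(x)$; substituting the explicit $g_1$ and $g_1'=\tfrac{\sqrt{1-\beta}}{2\beta}(1-u)^{-3/2}$ recovers the stated formulas (and, as a check, the same identities with $g_0$ reproduce $s$ and $r$). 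Finally, because $\phi$ is strictly increasing, the events $\{U_0>U_1\}$, $\{U_0=U_1\}$ and $\{U_0<U_1\}$ are preserved under the relabelling, so the three probabilities are identical to those in Theorem \ref{jointcont}, namely $\tfrac12$, $\tfrac16$ and $\tfrac13$; this also explains why they are independent of $\beta$.

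The step I expect to be the main obstacle is justifying rigorously that the coordinate-wise pushforward by a strictly increasing $\phi$ preserves stationarity and ergodicity for the multi-type TASEP R0 --- that is, that the relabelling genuinely commutes with the dynamics at the level of the coupled process, including the subtlety that the speed process is a measure on $\mathbb{R}^{\mathbb{Z}}$ rather than on finitely many classes. Once this invariance is combined with the uniqueness statement of Theorem \ref{duality}, the remainder is a routine change of variables.
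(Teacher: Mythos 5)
Your proposal is correct, and your formulas match the theorem: with $g_1'(u)=\frac{\sqrt{1-\beta}}{2\beta}(1-u)^{-3/2}$ one checks that $2\left(g_1(y)-g_1(x)\right)g_1'(x)g_1'(y)$ equals the stated expression for $x\leq y$, and that $g_1(x)\left(1-g_1(x)\right)g_1'(x)$ expands to the stated $r_1$. However, your route is genuinely different from the paper's. The paper derives the joint law directly from the queueing construction: it applies Lemma \ref{queue} with $N=3$ and thresholds $x_1<x_2$, identifies the law of $(V_0,V_1)=(F_1(U_0),F_1(U_1))$ with the unique stationary $3$-type measure, computes $\mathbb{P}\left[V_0=1,V_1=2\right]=x_1x_2(x_2-x_1)$, $\mathbb{P}\left[V_0=3,V_1=2\right]=(1-x_2)(x_2-x_1)$ and $\mathbb{P}\left[V_0=2,V_1=2\right]=(1-x_1)x_2(x_2-x_1)$ from the tandem-queue representation of Theorem \ref{invariantmulti}, and then differentiates in $x_1,x_2$ (with a diagonal limit for the atomic part) to obtain $s_1$ and $r_1$. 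Your idea --- transporting the continuous-time law of Theorem \ref{jointcont} through $\phi=g_1^{-1}\circ g_0$ and invoking uniqueness in Theorem \ref{duality} --- appears in the paper only as the alternative argument $(*)$ for the ordering probabilities and in the discussion preceding Corollary \ref{orderingcorollary}, not as the derivation of the densities. What your approach buys: it is shorter, and it makes transparent why the three probabilities $\frac12$, $\frac16$, $\frac13$ are independent of $\beta$. What the paper's approach buys: it is self-contained (run with $g_0$ it would reprove Theorem \ref{jointcont} rather than assume it), and it is the method that extends to R3 (Theorems \ref{jointR3a} and \ref{jointR3c}), where your reduction is unavailable because the R3 stationary measures depend on $\beta$ and do not coincide with those of R0. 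Finally, the step you flag as the main obstacle --- relabelling invariance for real-valued classes --- can be bypassed using the paper's own Lemma \ref{queue}: for any non-decreasing $F:D(g_1)\to\{1,\dots,N\}$, the processes $\{F(U_n^{(1)})\}$ and $\{F(\phi(U_n^{(0)}))\}$ are each, by Lemma \ref{queue} (applied to $F$ with $i=1$, and to $F\circ\phi$ with $i=0$; the identity $g_0=g_1\circ\phi$ gives the same densities $\lambda_l$, and Theorem \ref{invariantmulti} identifies the stationary measures of R0 and R2), the same unique stationary $N$-type measure; since such finite discretizations determine the law on $\mathbb{R}^{\mathbb{Z}}$ by a monotone-class argument, the identity $\mu^{(1)}=\phi_*\mu^{(0)}$ follows without any new pathwise lemma.
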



\begin{remark} By symmetry, the joint distribution of $\left(U_0,U_1\right)$ in the model R2
is the same as that of $\left(-U_1,-U_0\right)$ in the model R1.
\end{remark}

\begin{theorem}[TASEP R3] \label{jointR3a}
The joint distribution of $\left( U_0, U_1 \right)$ has support on $\left[ - \frac{2\beta}{2-\beta} , \frac{2\beta}{2-\beta} \right]^2$ and is given by
$$ s_2(x,y) dx dy + r_2(x) \mathbbm{1}_{\left\{ x = y \right\}} dx $$
with
$$ s_2(x,y) =
\begin{cases}
	g_3^{'}(x) g_{4}^{'}(y) & x>y \\
	g_3^{'}(x) g_{4}^{'}(y) \left( g_{4}(y) - g_{4}(x) \right) \\
	\qquad \qquad \cdot \left( 2 - g_{4}(x) \beta - g_{4}(y) \beta \right) \left( \frac{2 + y}{2 - y} \right) & x \leq y
\end{cases}
$$
and
\begin{center}
$r_2(x) = \frac{1-\beta}{\beta^3} \left( \frac{2(2-\beta)}{4 - x^2} - \frac{8}{4 - x^2} \sqrt{ \frac{1-\beta}{4-x^2}} \right)$
\end{center}
In particular,
\footnotesize
$$ \mathbb{P} \left[ U_0 > U_1 \right] = \frac{1}{\beta^2} \left( \beta - \left( 1 - \beta \right) \log \left( \frac{1}{1-\beta} \right) \right) $$
$$ \mathbb{P} \left[ U_0 = U_1 \right] = \frac{(1-\beta)(2-\beta)}{\beta^3} \log \left( \frac{1}{1-\beta} \right) - \frac{2(1-\beta)}{\beta^2} $$
\normalsize
and
\footnotesize
$$ \mathbb{P} \left[ U_0 < U_1 \right] = \frac{(1-\beta)(2-\beta)}{\beta^2} + \frac{2(1-\beta)^2}{\beta^3} \log(1-\beta) $$
\normalsize
\end{theorem}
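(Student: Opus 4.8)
The plan is to follow the scheme that Amir, Angel and Valk\'{o} use for the continuous-time statement (their Theorem 1.7, which is our Theorem~\ref{jointcont}) and that underlies our model R1 (Theorem~\ref{jointR1}), adapting it to the even/odd structure of R3. The starting point is Theorem~\ref{duality}: the distribution $\mu^{(3)}$ of the speed process in R3 is the unique stationary measure for the TASEP R4 with marginals $g_3$ on even sites and $g_4$ on odd sites. By Theorem~\ref{invariantmulti} this stationary measure is given explicitly by the queues-in-tandem construction of Remark~\ref{queueexplanation} (for R4, obtained from the R3 construction by the transformation relating the two), so that computing the joint law of the speeds $(U_0,U_1)$ of the adjacent particles at sites $0$ and $1$ reduces to computing a two-point function of this explicitly-built invariant measure. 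Since site $0$ is even and site $1$ is odd, the two marginals are $g_3$ and $g_4$ respectively, which already explains the appearance of $g_3'(x)$ and $g_4'(y)$.

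First I would reduce to finitely many classes. To extract the joint density at a pair $x\neq y$ in the support interval, I collapse the continuum of speeds into a bounded number of classes determined by the thresholds $x,x+dx,y,y+dy$, so that $\{U_0\in[x,x+dx]\}$ and $\{U_1\in[y,y+dy]\}$ become statements about which classes occupy sites $0$ and $1$. The invariant law of this finite-class R3/R4 system is again of queueing form, built from Bernoulli arrival and service processes whose parameters are read off from $g_3,g_4$ at the thresholds; crucially for R3, these parameters alternate between even and odd times. Reading off the relevant site-$0$/site-$1$ occupation probabilities and letting $dx,dy\to0$ yields the off-diagonal density $s_2(x,y)$, while the diagonal atom $r_2(x)\,\mathbbm{1}_{\{x=y\}}$ comes from the residual probability that both sites carry speeds in a common infinitesimal interval.

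The remaining work is a stationary analysis of the discrete-time tandem queue. In the generic case $x>y$ the two speeds decouple and one obtains the product $g_3'(x)g_4'(y)$ directly. The correlated case $x\leq y$, together with the diagonal atom, arises because with positive probability two adjacent particles share the same asymptotic speed: in the queueing picture this corresponds to a second-class departure being ``shadowed'' by services consumed by higher-priority particles. Tracking this through the queues, with the alternating rates, should produce the extra factors $\bigl(g_4(y)-g_4(x)\bigr)$, $\bigl(2-g_4(x)\beta-g_4(y)\beta\bigr)$ and $\tfrac{2+y}{2-y}$ in $s_2$, as well as the explicit form of $r_2$; the probabilities $\mathbb{P}[U_0>U_1]$, $\mathbb{P}[U_0=U_1]$ and $\mathbb{P}[U_0<U_1]$ then follow by integration. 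As a consistency check I would verify that $\int s_2(x,y)\,dy + r_2(x)$ recovers the even-site marginal $g_3'(x)$, and symmetrically that the $y$-marginal is $g_4'$.

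The main obstacle I anticipate is precisely the even/odd inhomogeneity that distinguishes R3 from the continuous-time and R1/R2 cases. Because the underlying queues have different arrival and service rates at even and odd times (as flagged in Remark~\ref{queueexplanation}), their stationary distribution is genuinely two-periodic rather than a single geometric law, and sites $0$ and $1$ sit at different phases of this period; this is also why the correlation factors in $s_2$ involve $g_4$ rather than $g_3$. Carrying the alternating parameters correctly through the tandem and pinning down the exact combinatorial coefficients, especially the diagonal mass $r_2(x)$ and the asymmetric $\frac{2+y}{2-y}$ factor, which have no counterpart in the continuous computation, is where the bulk of the effort lies.
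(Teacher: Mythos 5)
Your proposal is correct and follows essentially the same route as the paper: the paper proves Theorem \ref{jointR3a} exactly as it proves Theorem \ref{jointR1}, namely via the finite-class reduction of Lemma \ref{queue} (which is what your threshold construction amounts to, resting on Theorems \ref{duality} and \ref{invariantmulti}), followed by computing two-point functions of the queueing-built stationary measure of the dual model R4 with alternating even/odd rates, differentiating in the thresholds for the off-diagonal density, taking a limit for the diagonal atom, and integrating to obtain the collision probabilities. No gaps in the approach.
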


\begin{remark}
Again by symmetry, we have that under rule R3, $(U_1,U_2)$ has the same distribution as $(-U_1,-U_0)$.
\end{remark}


\begin{theorem}[TASEP R3] \label{jointR3c}
The joint distribution of $\left( U_0, U_2 \right)$ has support on $\left[ - \frac{2\beta}{2-\beta} , \frac{2\beta}{2-\beta} \right]^2$ and is given by
$$ s_3(x,y) dx dy + r_3(x) \mathbbm{1}_{\left\{ x = y \right\}} dx $$
with
$$ s_3(x,y) =
\begin{cases}
	g_3^{'}(x) g_3^{'}(y) & x>y \\
	g_3^{'}(x) g_3^{'}(y) \left( g_{4}(x)^2 - g_{4}(y)^2 \vphantom{+ \frac{2(g_{4}(y) - 1)(g_{4}(x)^2 - 2g_{4}(x)g_{4}(y) + g_{4}(y) - 1}{1 - \beta g_{4}(y)} - 1} \right. \\
	\qquad - \frac{2(g_{4}(x) - 1)(g_{4}(y)^2 - 2g_{4}(x)g_{4}(y) + g_{4}(x) - 1}{1 - \beta g_{4}(x)} \\
	\qquad \qquad \left. + \frac{2(g_{4}(y) - 1)(g_{4}(x)^2 - 2g_{4}(x)g_{4}(y) + g_{4}(y) - 1}{1 - \beta g_{4}(y)} - 1 \right) & x \leq y
\end{cases}
$$
and
\begin{center}
$r_3(x) = \frac{g_3(u)(1-g_3(u))(1 - g_{4}(u)(1 - g_{4}(u)))}{\frac{(2-u)\beta}{2} \sqrt{\frac{4-u^2}{1-\beta}}}$
\end{center}
In particular,
\footnotesize
$$ \mathbb{P} \left[ U_0 > U_2 \right] = \frac{1}{2} $$
$$ \mathbb{P} \left[ U_0 = U_2 \right] = \frac{1}{6} + \frac{1}{3\beta} - \frac{13}{3\beta^2} + \frac{8}{\beta^3} - \frac{4}{\beta^4} - \frac{\left( 1 - \beta \right)^2}{\beta^3} \left( \log \left( \frac{1}{1-\beta} \right) \right) \left( \frac{2}{\beta} - \frac{4}{\beta^2} \right) $$
\normalsize
and
\footnotesize
$$ \mathbb{P} \left[ U_0 < U_2 \right] = \frac{1}{3} - \frac{1}{3\beta} + \frac{13}{3\beta^2} - \frac{8}{\beta^3} + \frac{4}{\beta^4} + \frac{\left( 1 - \beta \right)^2}{\beta^3} \left( \log \left( \frac{1}{1-\beta} \right) \right) \left( \frac{2}{\beta} - \frac{4}{\beta^2} \right) $$
\normalsize
\end{theorem}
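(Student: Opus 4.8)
The plan is to obtain the joint law of $(U_0,U_2)$ from the description of the speed process as a stationary measure, by the same route used for the adjacent pair in Theorem~\ref{jointR3a}; the only genuinely new feature is that particles $0$ and $2$ occupy two \emph{even} sites separated by the odd site $1$. By Theorem~\ref{duality}, $\mu^{(3)}$ is the unique stationary measure of the TASEP R4 with marginals $g_3$ on even sites and $g_4$ on odd sites, and by Theorem~\ref{invariantmulti} this measure is produced by the tandem-queue construction of Remark~\ref{queueexplanation}, in which arrival and service rates alternate between even and odd times. The joint distribution of the real-valued classes $(U_0,U_2)$ is then read off as the continuum limit of the joint law of the classes at sites $0$ and $2$ in the finite multi-type measure, letting the densities $\rho_1<\dots<\rho_n$ fill the interval $\left[-\frac{2\beta}{2-\beta},\frac{2\beta}{2-\beta}\right]$.

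Two structural facts organise the answer. First, exactly as in Theorems~\ref{jointcont}, \ref{jointR1} and \ref{jointR3a}, on the region $\{U_0>U_2\}$ the joint density should collapse to the product $g_3'(x)g_3'(y)$ of the two (identical, even-site) marginals: a coupling/last-passage argument of the kind already used for the adjacent pair shows that there the two speeds are functions of disjoint parts of the driving randomness, and are therefore independent. Second, since a convoy (a maximal block of consecutive particles sharing one asymptotic speed) is always an interval of labels, the event $\{U_0=U_2\}$ coincides with $\{U_0=U_1=U_2\}$; this is what produces the singular mass $r_3$ on the diagonal and explains why $r_3$ mixes both the even factor $g_3$ and the odd factor $g_4$.

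The core computation is therefore genuinely three-point. I would compute the joint law of the triple $(U_0,U_1,U_2)$ directly from the tandem-queue stationary measure (or, where available, by combining the adjacent laws of Theorem~\ref{jointR3a} for $(U_0,U_1)$ and its space-reversed analogue for $(U_1,U_2)$ through the conditional structure of the field, a step I would justify via the queue construction), and then integrate out the odd-site speed $U_1$. This marginalisation is the step I expect to be the main obstacle: because of the period-$2$ structure one must carry the different even and odd rates through the interaction at site $1$, and it is this integration that generates the correction factor on $\{x\le y\}$, including the denominators $1-\beta g_4(x)$ and $1-\beta g_4(y)$ (of the same type as the jump probability $\tfrac{\beta f_1(1-f_1)}{1-\beta f_1}$ appearing earlier) and the factor $\tfrac{2+y}{2-y}$ already seen in Theorem~\ref{jointR3a}.

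Finally, the three probabilities follow. Because $s_3$ restricted to $\{x>y\}$ is the product of the two equal even-site marginal densities, and $g_3'$ integrates to one as the marginal density of each $U$, we get $\mathbb{P}[U_0>U_2]=\tfrac12\left(\int g_3'(w)\,dw\right)^2=\tfrac12$ with no further work, independently of $\beta$. The remaining two values come from integrating the correction density $s_3$ over $\{x<y\}$ and the singular density $r_3$ over the diagonal; these are lengthy but routine one-variable integrals and yield the stated rational-in-$\beta$ and logarithmic expressions, which by construction must satisfy the consistency check $\mathbb{P}[U_0=U_2]+\mathbb{P}[U_0<U_2]=\tfrac12$.
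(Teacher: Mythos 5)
Your overall route -- realise the speed-process law as the stationary measure given by the tandem-queue construction, discretise the speeds into finitely many classes via a non-decreasing map, compute joint class probabilities at sites $0$ and $2$, and pass to the limit -- is exactly the paper's: Theorem \ref{jointR1} is proved this way via Lemma \ref{queue}, and the paper states that Theorems \ref{jointR3a}--\ref{jointR3c} follow by the same method. Note also that ``integrating out $U_1$'' is not really a separate step: computing $\mathbb{P}\left[ V_0 = a , V_2 = b \right]$ in the queue representation automatically sums over the possible types at the odd site $1$, so no genuinely three-point limit law is needed.

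However, your second ``structural fact'' is false, and it would corrupt precisely the part of the computation that is new in this theorem. It is not true that $\left\{ U_0 = U_2 \right\}$ coincides with $\left\{ U_0 = U_1 = U_2 \right\}$: conditionally on $U_0 = U_2 = u$, particle $1$ can be strictly faster (it escapes ahead) or strictly slower (particle $0$ overtakes it while particle $2$ stays ahead) with positive probability. Concretely, in the discretised queue picture with cut-points $x_1 < x_2$, $x_2 - x_1 = \epsilon$, the event that sites $0$ and $2$ both carry class-$2$ particles has probability of order $\epsilon$, and on this event the odd site $1$ carries a hole, a stronger particle, or another class-$2$ particle with conditional probabilities all of order one; summing these three cases is what produces the factor $1 - g_4(u)\left(1 - g_4(u)\right) = \left(1-g_4(u)\right)^2 + g_4(u)^2 + g_4(u)\left(1-g_4(u)\right)$ in the numerator of $r_3$. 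Under your claim this factor would be replaced by the single term $g_4(u)\left(1-g_4(u)\right)$, giving a wrong $r_3$ and hence wrong values of $\mathbb{P}\left[ U_0 = U_2 \right]$ and $\mathbb{P}\left[ U_0 < U_2 \right]$. (More globally, convoys in the speed process are infinite with zero density, so they cannot be intervals of labels.) A lesser issue of the same kind: the independence of the speeds on $\left\{ U_0 > U_2 \right\}$ is an \emph{output} of the queue computation (the paper records it as an observation after the theorems), not an input you may cite; as written you use it to conclude $\mathbb{P}\left[ U_0 > U_2 \right] = \tfrac{1}{2}$ before doing the computation that would justify it, though the calculation you propose would in fact deliver the product form on that region.
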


We see that in every model we have
  that the speeds are independent on the set where $U_0 > U_1$ ($U_0 >
  U_2$ respectively). This agrees with the result in continuous
  time. The striking result, shown in \cite{amirangelvalko} for the continuous model, that with
  positive probability the two continuous random variables $U_0$ and
  $U_1$ are equal, holds also in the discrete models.

Interestingly, the probabilities $\mathbb{P} \left[ U_0 > U_1
  \right]$, $\mathbb{P} \left[ U_0 = U_1 \right]$ and $\mathbb{P}
\left[ U_0 < U_1 \right]$ are the same for models R0, R1 and R2, and
do not depend on the
parameter $\beta$. This is rather surprising since $\beta$ is not
just a scaling parameter (i.e. we cannot produce models with different
values of $\beta$ by just applying a time change). In fact, much more 
is true. From the first part of Theorem \ref{invariantmulti},
we see that, although the marginal distribution of each $U_i$ 
depends on the model and the value of $\beta$, we can obtain the 
distribution for either of R1 and R2 and any value of $\beta$
by applying an appropriate monotone function to each entry $U_i$
(see the proof of Theorem \ref{jointR1} for further details).
Hence the relative ordering of the variables $U_i$ is not affected
by the model or the value of $\beta$.

To go further, consider particles $i$ and $j$ with $i<j$.
It's clear that if $U_i<U_j$ then 
particle $i$ can never overtake particle $j$, while if $U_i>U_j$
then particle $i$ must overtake particle $j$. 
In \cite{amirangelvalko}, it's shown that for the continuous-time model, 
with probability 1, if $U_i=U_j$ then particle $i$ overtakes particle $j$. 
The same result can be shown for the discrete-time models, although 
the calculations involved in the argument are rather more complicated than those 
used to prove Theorem 1.14 of \cite{amirangelvalko}, and we omit them here.
So, for example, the probability that particle $i$ overtakes particle
$j$ is the same for models R0, R1 and R2. Indeed, more completely
one can define an ordering $\prec$ on $\mathbb{Z}$ by $i\prec j$ iff 
particle $j$ is eventually to the right of particle $i$. Then we have the following result:
\begin{cor}\label{orderingcorollary}
The ordering $\prec$ has the same distribution for R0, R1 and R2 and for any
value of $\beta$. 
\end{cor}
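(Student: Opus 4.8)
The plan is to show that the ordering $\prec$ is a deterministic functional of the speed process $\{U_n\}_{n\in\ZZ}$ which depends on the speeds only through their relative order (with ties), and then to use the fact that this relative order has a law that does not depend on the model R0, R1, R2 or on $\beta$. First I would read off $\prec$ from the speeds. Fix $i<j$, so particle $i$ starts to the left of particle $j$ and has the higher priority. By the overtaking dichotomy recalled just before the statement: if $U_i<U_j$ then particle $i$ never catches particle $j$, so $j$ stays to the right and $i\prec j$; if $U_i>U_j$ then particle $i$ overtakes particle $j$ and ends up to its right, so $j\prec i$; and in the boundary case $U_i=U_j$ the result quoted from \cite{amirangelvalko} (valid in the discrete models too) gives that particle $i$ still overtakes particle $j$ almost surely, so again $j\prec i$. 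Hence, almost surely, for every pair $i<j$ one has $i\prec j$ if and only if $U_i<U_j$, the case $U_i=U_j$ giving $j\prec i$. In other words there is a fixed map $F$, defined on configurations of real speeds and depending only on the relative order of the entries together with the fixed index order on $\ZZ$, such that $\prec=F(\{U_n\}_n)$ almost surely, and the same $F$ serves all of the models.

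Next I would transport the speed process between models. By Theorem \ref{duality}, for $i=0,1,2$ the law $\mu^{(i)}$ of the speed process is the unique stationary ergodic measure for the TASEP R0 whose marginal has distribution function $g_i$. Since the R0 dynamics depend on the particle classes only through their relative order, applying a fixed strictly increasing function coordinatewise to all classes carries one stationary ergodic measure to another, changing only the marginal law (stationarity and ergodicity are preserved because the relabelling commutes with both the dynamics and the shift). Taking $\phi_i=g_i^{-1}\circ g_0$, which is strictly increasing as a composition of strictly increasing functions, sends the $g_0$-marginal to the $g_i$-marginal, so the pushforward $(\phi_i)_*\mu^{(0)}$ is a stationary ergodic measure for R0 with marginal $g_i$; by the uniqueness clause of Theorem \ref{duality} it equals $\mu^{(i)}$. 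Thus $\{U_n^{(i)}\}_n\stackrel{d}{=}\{\phi_i(U_n^{(0)})\}_n$ for a common strictly increasing $\phi_i$, and the same argument absorbs the dependence on $\beta$. This is precisely the monotone-coupling fact asserted from the first part of Theorem \ref{invariantmulti}.

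Finally I would combine the two steps. Because $F$ depends on the speeds only through their relative order and $\phi_i$ is strictly increasing (hence injective and order-preserving), $F$ is invariant under the coordinatewise action of $\phi_i$, i.e. $F(\{\phi_i(u_n)\}_n)=F(\{u_n\}_n)$ for every speed configuration. Therefore the law of $\prec$ in model R$i$ is
$$ F_*\mu^{(i)} = F_*(\phi_i)_*\mu^{(0)} = (F\circ\phi_i)_*\mu^{(0)} = F_*\mu^{(0)}, $$
which is the law of $\prec$ in R0 and is independent of $i\in\{0,1,2\}$ and of $\beta$. The step I expect to be the most delicate is the passage from mere equality of marginals to the coordinatewise-monotone coupling of the full joint laws in the second paragraph; this rests on the relabelling invariance of the R0 dynamics together with the uniqueness in Theorem \ref{duality}, rather than on any new computation. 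The tie case $U_i=U_j$ is the other essential ingredient, since without it $\prec$ would not be well defined on a full-measure set, but it is supplied by the discrete-time extension of the argument of \cite{amirangelvalko}.
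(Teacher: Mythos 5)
Your proof is correct and follows essentially the same route as the paper: it combines the monotone-coupling of speed processes across models (which the paper obtains, exactly as you do, from the uniqueness statement in Theorem \ref{duality} together with the coincidence of stationary measures in Theorem \ref{invariantmulti}, cf.\ the step $(*)$ in the proof of Theorem \ref{jointR1}) with the fact that $\prec$ is determined by the relative order of the speeds, ties being resolved by the discrete-time analogue of Theorem 1.14 of \cite{amirangelvalko}. Note that, like the paper, your argument ultimately rests on that tie-breaking result, whose proof the paper explicitly omits; your treatment is no less complete than the original on this point.
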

It would certainly be interesting to have a more direct understanding
of this property, based for example on couplings or local dynamics, as
well as the indirect argument based on the equivalence of multi-type
equilibrium distributions. 

Overtaking probabilities in the multi-type TASEP can also be 
interpreted in terms of questions of survival or extinction
in multi-type growth models. In \cite{ferrarigoncalvesmartin}
a coupling is given between the multi-type TASEP and 
a three-type version of the corner growth model, under
which a given cluster survives for ever if and only if
particle 0 never overtakes particle 1. (The extinction
of the cluster occurs if two interfaces in the growth model 
meet -- the paths of these interfaces are related to the paths
of the two particles). Different overtaking events 
in the TASEP can be represented by varying the initial condition
in the competition growth model. Using the results above, 
we find that the survival probabilities in the growth model
will remain unchanged if we move from the continuous-time model
to natural discrete-time models which correspond to models R1 or R2 in the 
TASEP. Again, this is certainly 
not obvious from the local dynamics of the processes. 

Unlike in models R1 and R2, 
in the model R3 the probabilities $\mathbb{P} \left[
  U_0 > U_1 \right]$, $\mathbb{P} \left[ U_0 = U_1 \right]$ and
$\mathbb{P} \left[ U_0 < U_1 \right]$ do depend on $\beta$ and the
behaviour of the model is qualitatively different for
different values of $\beta$ (Theorem \ref{jointR3a}): For small
$\beta$ we have $\mathbb{P} \left[ U_1 < U_0 \right] > \mathbb{P}
\left[ U_1 > U_0 \right]$, but $\mathbb{P} \left[ U_1 < U_0 \right] <
\mathbb{P} \left[ U_1 > U_0 \right]$ for large $\beta$ (the transition
occurs at $\beta = 0.38860064568\ldots$).

Note however that for $\beta \rightarrow 1$ the probabilities 
relating $U_0$ and $U_2$
in Theorem
\ref{jointR3c} converge to $\frac{1}{2}$, $\frac{1}{6}$ and
$\frac{1}{3}$, i.e.\ to the probabilities we get in the continuous model
and R1 and R2 for the speeds of particle $0$ and $1$. In a sense, for
large $\beta$ the particles $0$ and $2$ in the model R3 behave like
adjacent particles in the models R1 and R2. This can heuristically be
seen in the following way: We consider the particles in the model R3
(with large $\beta$ close to $1$) starting on even sites. In general,
particles starting on an even site will move two steps to the right in
each time-step since $\beta$ is large and we update even sites
first. If a particle does not jump either during the even or the odd
update (which happens with probability $2 \beta (1 - \beta)$) it ends
up on an odd site and starts moving left until either
\begin{itemize}
 \item (A) it hits a weaker particle to the left by which it cannot be overtaken
 \item (B) it does not get jumped over either during an even or odd
   update because the adjacent particle to the left did not try to
   jump
\end{itemize}
In both cases the particle itself will return to an even site (with
high probability) and resume moving to the right. The particle that
caused the stop (either because it was weaker or because it did not
try to jump) will itself start moving to the left until (A) or (B)
happens. Now consider the model R1 with large $\beta$. Most particles
will move one step to the right in each time-step, but some particles
do not jump and therefore get overtaken until again either (A) or (B)
happens (where we remove the part ``either during and even or odd
update''). Particles in these two models have different speeds, but
the probabilities $\mathbb{P} \left[ U_0 > U_1 \right]$, $\mathbb{P}
\left[ U_0 = U_1 \right]$, $\mathbb{P} \left[ U_0 < U_1 \right]$ in R1
and $\mathbb{P} \left[ U_0 > U_2 \right]$, $\mathbb{P} \left[ U_0 =
  U_2 \right]$ and $\mathbb{P} \left[ U_0 < U_2 \right]$ in R3 are
(almost) the same.


\section{Proofs}

\subsection{Invariant Measures}

As the idea of the proof for Theorem \ref{invariantmulti} is the same for the discrete-time models as for the continuous-time model R0 we will only sketch the proof. When thinking about the model R3 bear in mind that we have different densities on even and odd sites.


\begin{proof}[Proof sketch for Theorem \ref{invariantmulti}:]
We can proceed in the same way as in \cite{ferrarimartin}: Using arguments as in \cite{mountfordprabhakar} we can see that for every parameter $\rho \in (0,1)$ there exists an essentially unique function $H_{\rho}$ which maps Bernoulli processes $\omega$ on $\mathbb{Z} \times \mathbb{Z}$ onto stationary and space-ergodic doubly infinite trajectories $(\eta_n)_{n \in \mathbb{Z}}$ of the TASEP governed by $\omega$ with time-marginals $\mu_{\rho}$ (Proposition 8 in \cite{ferrarimartin}). For each $\rho$ and $\omega$ we can construct a set of dual points $\bigtriangleup_{\rho}(\omega)$ which govern the time-reversal of $(\eta_n)_{n \in \mathbb{Z}}$ and again form a Bernoulli process. Also the set of dual points before time $m$ is independent of the configuration $\eta_m$ (Proposition 10 in \cite{ferrarimartin}). We now take $\rho_1 < \ldots < \rho_n$ and let $\alpha_m = (\alpha^1_m, \ldots, \alpha^n_m)$ be the multiline TASEP trajectory governed by $\omega$. This means that $\omega^n = \omega$, $\omega^k = \bigtriangleup_{\rho_{k+1}}(\omega^{k+1})$ and $(\alpha^k_m)_{m \in \mathbb{Z}}$ is the TASEP trajectory governed by $\omega^k$ with density $\rho_k$. Then by the independence of the dual points before time $m$ from the configuration $\eta_m$ we get that the multiline process is stationary with product measure $\nu = \nu_{\rho_1} \times \ldots \times \nu_{\rho_n}$ (Proposition 11 in \cite{ferrarimartin}). As in the paragraph preceding Theorem \ref{invariantmulti} we define $\eta = \left( \eta^1 , \ldots , \eta^n \right)$ by
$\eta^k = D^{(n-k+1)} \left( \alpha_k , \ldots , \alpha_n \right)$. Then induction arguments and some case-by-case checking for $n=2$ show that $(\eta^k_n)_{n \in \mathbb{Z}}$ is the TASEP trajectory governed by $\omega$ with particle density $\rho_k$ (Proposition 12 in \cite{ferrarimartin}) and this implies Theorem \ref{invariantmulti}.
\end{proof}

\begin{remark}
As mentioned in the beginning of this section, for the model R3 we have to think of the $\rho_k$ as densities on even sites and we have to replace the $\nu_{\rho_k}$ by $\mu_{\rho_k}$.
\end{remark}

\begin{remark}
Inherent in the tandem queue construction for the multi-type
stationary distribution in model R3
is a version of Burke's theorem for 
the queues with different arrival
  and service rates on even and odd sites.
  Consider a queue with arrival process $A_n$, service process
  $S_n$ and departure process $D_n$. Let $A_n$ be a Bernoulli process
  with rate $\rho_1 = (\gamma_1, \gamma_2) \in (0,1)^2$ which means
  that on even sites arrivals happen with probability $\gamma_1$ and
  on odd sites they happen with probability $\gamma_2$. Motivated by
  the invariant distributions for the TASEP R3 with just one type of
  particles we want $\gamma_1$ and $\gamma_2$ to satisfy
\begin{align}
\gamma_2 = \frac{\gamma_1 \left( 1 - \beta \right)}{1 - \gamma_1 \beta} \label{rho1}
\end{align}
where $\beta \in \left( 0,1 \right)$ is the rate at which jumps in the
TASEP happen. Analogously, we let $S_n$ be a Bernoulli process with
rate $\rho_2 = ( \delta_1, \delta_2 ) \in (0,1)^2$ where
\begin{align}
\delta_2 = \frac{\delta_1 \left( 1 - \beta \right)}{1 - \delta_1 \beta} \label{rho2}
\end{align}
(and $\gamma_1 < \delta_1$, $\gamma_2 < \delta_2$). The main
observation in Burke's Theorem (see for example \cite{burke}) that
shows that arrival and departure process have the same distribution is
that the queue length process is reversible and that departures look
in the reversed process like arrivals in the original
process. Interestingly, it turns out that in the queueing model
described above there exists a stationary reversible distribution
$\pi$ for the queue length process which is independent of whether we
just observed arrivals and services at even sites or at odd
sites. $\pi$ is given by
$$ \pi(j) = \left( 1 - \frac{\gamma_1 \left( 1 - \delta_1 \right)}{\left( 1 - \gamma_1 \right) \delta_1} \right) \left( \frac{\gamma_1 \left( 1 - \delta_1 \right)}{\left( 1 - \gamma_1 \right) \delta_1} \right)^j \qquad j = 0,1,\ldots $$
and it is reversible because it satisfies the two systems of detailed
balance equations
$$ \pi(j) \gamma_i \left( 1 - \delta_i \right) = \pi(j+1) \left( 1 - \gamma_i \right) \delta_i \qquad j=0,1,\ldots $$
for $i=1,2$ ($i=1$ corresponds to even sites, $i=2$ corresponds to odd
sites). This follows from the relations (\ref{rho1}) and
(\ref{rho2}). As in Burke's Theorem it follows from the reversibility
of the queue length process that the departure process has the same
distribution as the arrival process, i.e. $D_n$ is a Bernoulli process
with rate $\rho_1 = (\gamma_1, \gamma_2)$.

Indeed, the multi-type construction yields extensions
of this result which give input-output theorems for 
the priority queues with more than one type of customer. 
For a discussion of the analogous
result in the context of constant arrival and service rates,
see for example Section 6 of \cite{ferrarimartin}.
\end{remark}

\subsection{Hydrodynamic limits}

\begin{proof}[Proof outline for Theorem \ref{hydro} for the TASEP R3:]
The following Propositions correspond to Propositions 2,3 and 5 in \cite{rost} and the proofs are essentially the same as in \cite{rost}.
\begin{proposition} \label{convS3}
For all $u \in \mathbb{R}$ the random variables $\frac{1}{n}S(\left[un\right],n) = \frac{1}{n} \sum_{k > [un]} \eta_n(k)$ converge a.s. and in $L^1$ to a constant $h_3(u)$, as $n$ goes to infinity. The function $h_3$ is decreasing, convex; one has $h_3(u) = 0$ for $u > \frac{2\beta}{2-\beta}$ and $h_3(u) = -u$ for $u < - \frac{2\beta}{2-\beta}$.
\end{proposition}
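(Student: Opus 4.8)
The plan is to follow Rost's strategy: establish convergence of the integrated current $\frac1n S([un],n)$ first, via the last-passage representation of R3 from Section~\ref{percrep}, and read off the regularity of the limit afterwards (the density profile $f_3=-h_3'$ is then recovered downstream). First I would rewrite the particle count geometrically. From the step initial condition, particle $k$ occupies site $-k+1+J_k(n)$ at time $n$, where $J_k(n)=\max\{m\ge 0:T^{(3)}(m,k)\le n\}$ is both the number of jumps $k$ has made and the length of row $k$ of the growth cluster $G^{(3)}(n)$. Since particle $k$ lies to the right of $[un]$ iff $J_k(n)\ge[un]+k$,
\[ S([un],n)=\#\{k\ge 1:J_k(n)\ge[un]+k\}, \]
so that, apart from the deterministically occupied indices with $[un]+k\le 0$, this counts the rows of $G^{(3)}(n)$ that extend past the diagonal line $\{([un]+k,k)\}$; equivalently $\frac1n S([un],n)$ is, up to $o(n)$, the length of the intersection of the rescaled cluster $\frac1n G^{(3)}(n)$ with that line.

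Next I would pass to the shape theorem. Feeding (\ref{LPPgeom}) through the deterministic identity (\ref{T3}) gives $\frac1n T^{(3)}([an],[bn])\to\gamma_3(a,b)$ a.s.\ for a deterministic, positively homogeneous limit $\gamma_3$; the convergence uniform over directions that is needed to control the whole cluster is the standard consequence of Kingman's subadditive ergodic theorem applied to the superadditive array $T^{(3)}$. Consequently $\frac1n G^{(3)}(n)$ converges to $\{(a,b)\ge 0:\gamma_3(a,b)\le 1\}$, and the count above converges a.s.\ to a deterministic $h_3(u)$, namely the length of the cross-section of this limit set along the line $\{(u+v,v):v\ge\max(0,-u)\}$. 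This yields the a.s.\ convergence, and $L^1$ convergence then follows by bounded convergence, since $\frac1n S([un],n)\le |u|+2$ deterministically (at most one particle crosses a given bond per time-step).

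It remains to read off the properties of $h_3$. Monotonicity in $u$ is immediate because $S(x,n)$ is non-increasing in $x$ for every $n$; convexity follows from the subadditivity of the current exactly as in \cite{rost} (equivalently, from the variational characterisation of the limit). For the right edge I would compute the speed of the leading particle $k=1$ directly: in (\ref{T3}) one has $\tT(m,1)=\sum_{j\le m}w(j,1)$ and the parity correction equals $\tfrac m2+O(1)$, so $T^{(3)}(m,1)/m\to\frac{1-\beta}{\beta}+\frac12=\frac{2-\beta}{2\beta}$ and hence $J_1(n)/n\to\frac{2\beta}{2-\beta}$; thus for $u>\frac{2\beta}{2-\beta}$ no particle ever reaches $[un]$ and $h_3\equiv 0$. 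For the left edge, conservation gives the answer: for $u<-\frac{2\beta}{2-\beta}$ the sites to the left of $[un]$ are a.s.\ still fully occupied (the rarefaction front has not reached them), so $S([un],n)=-[un]+o(n)$ and $h_3(u)=-u$.

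The hard part is the shape theorem of the second paragraph in the precise form required for R3: the parity-dependent unit edge weights in (\ref{T3}), present only on edges whose upper-right endpoint $(n,k)$ has $n+k$ even, spoil the clean i.i.d.\ vertex structure, so the subadditive ergodic theorem must be set up for the two-step even-then-odd dynamics, and one must check that the resulting $\gamma_3$ is genuinely homogeneous and produces the stated edges $\pm\frac{2\beta}{2-\beta}$. Once this is in place the remaining steps run exactly as in \cite{rost}.
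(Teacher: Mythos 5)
Your proposal is correct, but it reaches Proposition \ref{convS3} by a genuinely different route from the paper. The paper's proof is simply Rost's: Proposition \ref{convS3} is his Proposition 2 in \cite{rost}, proved by applying Kingman's subadditive ergodic theorem directly to the currents of the particle system (subadditivity of $S$ over space--time increments comes from comparing, via attractiveness, the particles on either side of the reference site with a restarted step initial condition). That argument yields the a.s.\ and $L^1$ limit, its monotonicity and convexity, and, together with the speeds of the extreme particle and hole, the edge values, all \emph{without identifying} $h_3$; the explicit formula for $h_3$ enters only afterwards, by feeding (\ref{LPPgeom}) through (\ref{T3}). You instead run the percolation picture from the start: (\ref{T3}) plus (\ref{LPPgeom}) give the directional limits $\frac1n T^{(3)}([an],[bn])\to\gamma_3(a,b)$, the rescaled cluster converges, and $\frac1n S([un],n)$ is read off as a cross-section, so existence of the limit and its explicit value arrive simultaneously. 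Each approach buys something: Rost's needs no exactly solvable input and is what the paper literally invokes; yours requires (\ref{LPPgeom}) but collapses Proposition \ref{convS3} and the later identification of $h_3$ into one step, which the paper performs separately anyway.

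Three smaller points. First, the difficulty you flag in your final paragraph is not there, and the worry contradicts your own second paragraph: by (\ref{T3}) the parity-dependent edge weights contribute exactly the deterministic amount $\frac{n+k}{2}+O(1)$ in both parities, so all randomness sits in the plain i.i.d.\ geometric passage times $\tT$; no ``two-step'' subadditive setup is needed, $\gamma_3(a,b)=\frac{(1-\beta)a+2\sqrt{(1-\beta)ab}+(1-\beta)b}{\beta}+\frac{a+b}{2}$ is manifestly $1$-homogeneous, and $\gamma_3(1,0)=\gamma_3(0,1)=\frac{2-\beta}{2\beta}$ gives precisely the edges $\pm\frac{2\beta}{2-\beta}$. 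Second, Hausdorff convergence of $\frac1n G^{(3)}(n)$ does not by itself control one-dimensional cross-sections; what saves you is that $G^{(3)}(n)$ is a lower set ($T^{(3)}$ is nondecreasing in each argument), so along the line $\{([un]+k,k)\}$ membership is monotone in $k$, the count is determined by a single crossing index, and countably many directional laws of large numbers plus monotonicity already suffice---no uniform shape theorem is actually required. Third, for $u<0$ the limit is $h_3(u)=\max(0,-u)+\bigl|\{v>\max(0,-u):\gamma_3(u+v,v)\le1\}\bigr|$, not the cross-section length alone; with this correction one gets $h_3(u)=\inf\{v\ge\max(0,-u):\gamma_3(u+v,v)\ge1\}$, the lower boundary of the image of the convex set $\{\gamma_3\ge1\}\cap\mathbb{R}_+^2$ under $(a,b)\mapsto(a-b,b)$, so convexity and monotonicity of $h_3$ can be read off your own construction instead of being imported from \cite{rost}.
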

\begin{proposition} \label{densityprofile3}
If $h_3$ is differentiable at $u$, one has
$$ \lim_{n \rightarrow \infty} \mathbb{E} \left[ \eta_n \left( 2 \left[ \frac{k}{2} \right] \right) + \eta_n \left( 2 \left[ \frac{k}{2} \right] + 1 \right) \right] = - 2 h_3^{'}(u)  $$
whenever $\frac{k}{n}$ tends to $u$.
\end{proposition}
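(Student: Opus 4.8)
The plan is to deduce the local density profile from the integrated profile $h_3$ of Proposition~\ref{convS3}, exploiting the monotonicity of the occupation variables that comes from attractiveness. Since Proposition~\ref{convS3} gives $L^1$ convergence, taking expectations yields $\frac{1}{n}\mathbb{E}[S([un],n)] \to h_3(u)$ for every $u$, and hence, for $u < v$,
\begin{equation*}
\frac{1}{n}\sum_{[un] < k \le [vn]} \mathbb{E}[\eta_n(k)]
= \frac{1}{n}\bigl(\mathbb{E}[S([un],n)] - \mathbb{E}[S([vn],n)]\bigr)
\longrightarrow h_3(u) - h_3(v).
\end{equation*}
This is an ``integrated'' version of what we want; the task is to pass from the average over a macroscopic block of sites to the value at a single (even, odd)-pair. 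Because the invariant measure for R3 has different densities at even and odd sites, the single-site occupations $\mathbb{E}[\eta_n(k)]$ oscillate with the parity of $k$ and are not monotone in $k$; the natural monotone quantity is the block occupation
\begin{equation*}
\psi_n(j) := \mathbb{E}[\eta_n(2j)] + \mathbb{E}[\eta_n(2j+1)],
\end{equation*}
and the assertion is exactly that $\psi_n([k/2]) \to -2h_3'(u)$ whenever $k/n \to u$.

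First I would establish that $\psi_n(j)$ is non-increasing in $j$. Let $\eta$ denote the process started from the step initial condition and let $\eta'$ denote the process started from that condition shifted two sites to the right, so that $\eta'_0 \ge \eta_0$ coordinatewise. Since R3 is attractive (the basic coupling preserves the coordinatewise ordering, as noted in Section~2), we may couple the two processes so that $\eta'_n \ge \eta_n$ for all $n$, whence $\mathbb{E}[\eta'_n(x)] \ge \mathbb{E}[\eta_n(x)]$. The dynamics of R3 is invariant under shifts by two sites, since such shifts preserve parity, so $\mathbb{E}[\eta'_n(x)] = \mathbb{E}[\eta_n(x-2)]$, and therefore $\mathbb{E}[\eta_n(x-2)] \ge \mathbb{E}[\eta_n(x)]$ for every $x$. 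In particular both $j \mapsto \mathbb{E}[\eta_n(2j)]$ and $j \mapsto \mathbb{E}[\eta_n(2j+1)]$ are non-increasing, and so is $\psi_n$.

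With monotonicity in hand I would sandwich $\psi_n(j_0)$, where $j_0 = [k/2]$. Fix $u' < u < v$. The sum $\sum_{[u'n] < k \le [un]} \mathbb{E}[\eta_n(k)]$ equals $\sum_j \psi_n(j)$ over the roughly $(u-u')n/2$ blocks lying in $([u'n],[un]]$, up to an $O(1)$ boundary term; all these blocks have index at most $j_0$, so each term is at least $\psi_n(j_0)$. Dividing by $n$ and letting $n \to \infty$ gives
\begin{equation*}
\limsup_{n \to \infty} \psi_n(j_0) \le \frac{2\bigl(h_3(u') - h_3(u)\bigr)}{u - u'}.
\end{equation*}
Symmetrically, using the interval $([un],[vn]]$, whose blocks all have index at least $j_0$, gives
\begin{equation*}
\liminf_{n \to \infty} \psi_n(j_0) \ge \frac{2\bigl(h_3(u) - h_3(v)\bigr)}{v - u}.
\end{equation*}
Now let $u' \uparrow u$ and $v \downarrow u$. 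Since $h_3$ is convex and differentiable at $u$, the one-sided difference quotients both converge to $-h_3'(u)$, so both bounds tend to $-2h_3'(u)$ and we conclude $\psi_n(j_0) \to -2h_3'(u)$, which is the assertion.

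The main obstacle is the monotonicity step, where the period-two structure of R3 must be handled correctly: one can shift the initial condition only by an even amount, which is precisely why the block occupation $\psi_n$, rather than the single-site occupation, is the right quantity to track. The remaining sandwiching argument is the standard device of Rost~\cite{rost}, and the only inputs it needs beyond monotonicity are the integrated convergence of Proposition~\ref{convS3} and the convexity and differentiability of $h_3$ asserted there.
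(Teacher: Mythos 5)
Your proposal is correct and follows essentially the same route as the paper, which proves this proposition by stating that the argument is ``essentially the same as in Rost'' (his Proposition 3): namely the passage from the integrated profile $h_3$ to the local density via monotonicity of expected occupations and a convexity/differentiability sandwich. Your even-shift monotonicity of the pair sums $\psi_n(j)$ is exactly the adaptation the paper has in mind for R3 --- it is the reason the proposition is stated for the sum over a site pair rather than a single site, since (as the paper notes) single-site monotonicity in $k$ fails for this model.
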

\begin{proposition} \label{weakconv3} Let $\mu_3(k,n)$ be the distribution of
  $(\eta_{n}(k+l), l \in \mathbb{Z})$. If $h_3^{'}(u)$ exists, any
  weak limit $\mu_3^*$ of the measures 
$\mu_3 \left( 2 \left[\frac{un}{2} \right] , n \right)$ for $n \rightarrow \infty$ is
  of the form
$$ \mu_3^* = \int_0^1 \tau_x \sigma(dx) $$
with some probability $\sigma$ on $\left[ 0, 1 \right]$. $\tau_x$ is the Bernoulli product measure with density $b(x)$ on even sites and density $\frac{b(x) \left( 1 - \beta \right)}{1 - b(x) \beta}$ on odd sites where $b(x)$ is such that the average density is given by $x = \frac{1}{2} \left( b(x) + \frac{b(x) \left( 1 - \beta \right)}{1 - b(x) \beta} \right)$. That means that from Proposition \ref{densityprofile3} it follows that the measure $\sigma$ satisfies
$$ \int_0^1 x \sigma(dx) = f_3(u) = - h_3^{'}(u) $$
\end{proposition}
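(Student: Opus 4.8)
The plan is to follow Rost's proof of his Proposition 5 in \cite{rost}, adapted to the sublattice-parallel dynamics R3. Two structural features of R3 are used throughout. First, the one-step transition has bounded range: an even update moves particles across even-to-odd bonds and the subsequent odd update across odd-to-even bonds, so the occupation at a site after one step depends on only finitely many sites before the step; moreover the dynamics commute with shifts by $2$. Second, R3 admits the monotone basic coupling, so it is attractive. Fix $u$ with $h_3'(u)$ existing, and let $\mu_3^*$ be the weak limit of $\mu_3(2[un_j/2],n_j)$ along a subsequence $n_j\to\infty$. I would establish the claim in four steps: (i) $\mu_3^*$ is invariant under even shifts; (ii) $\mu_3^*$ is stationary for the R3 dynamics; (iii) an ergodic decomposition together with Proposition \ref{invariant} writes $\mu_3^*$ as a mixture of the Bernoulli measures $\tau_x$; (iv) Proposition \ref{densityprofile3} pins down the mean of the mixing measure.

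For step (i), even-shift invariance of any subsequential limit follows by a standard re-centring argument. Shifting the window by $2$ amounts, on the one hand, to pushing $\mu_3^*$ forward under $\theta_2$, since $\theta_2\mu_3(k,n)=\mu_3(k+2,n)$. On the other hand, the concentration of block averages from Proposition \ref{convS3} (using convexity of $h_3$, and the fact that a convex function differentiable at $u$ has $h_3'$ continuous there) shows that the empirical density in a mesoscopic window of size $\ell_j$ with $1\ll\ell_j\ll n_j$ around either $2[un_j/2]$ or $2[un_j/2]+2$ converges to the same value $f_3(u)=-h_3'(u)$, so the shifted window has the same limit. Hence $\theta_2\mu_3^*=\mu_3^*$.

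Step (ii) is the crux. Writing $S_1$ for one step of R3 and using boundedness of range, one has the semigroup identity $\mu_3(k,n+1)=\mu_3(k,n)\,S_1$ at the level of local functions (here $k$ is even, so the re-centred dynamics is again R3), whence $\mu_3^*S_1$ is the limit of $\mu_3(2[un_j/2],n_j+1)$. Advancing time by one step, or indeed by $o(n_j)$ steps, does not move $k/n$, so the macroscopic position is unchanged in the limit; and since $h_3$ is convex and differentiable at $u$, the density profile $f_3=-h_3'$ is locally flat at $u$, so there is no macroscopic drift of mass through the window. Following Rost, one combines this flatness with attractiveness: the monotone coupling lets one compare the process with its space-time translates and control the discrepancies, forcing the one-step evolution to preserve the limiting local statistics, i.e. $\mu_3^*S_1=\mu_3^*$. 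This is the step I expect to be the main obstacle, since it is exactly here that the local-equilibrium mechanism must be extracted from the global density convergence of Proposition \ref{convS3}; the adaptation to R3 requires carrying the even/odd asymmetry of the dynamics, and of the relevant currents, through Rost's comparison argument.

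Given (i) and (ii), the rest is routine. Since the even shifts commute with $S_1$, the ergodic decomposition of $\mu_3^*$ with respect to the shift-by-$2$ group produces components that are again stationary for R3 and ergodic under even shifts; by Proposition \ref{invariant} the only such measures are the Bernoulli products $\mu_\rho=\tau_x$, with even density $b(x)$ and odd density $b(x)(1-\beta)/(1-b(x)\beta)$. Hence $\mu_3^*=\int_0^1\tau_x\,\sigma(dx)$ for some probability measure $\sigma$ on $[0,1]$, where $x$ indexes the average density. Finally, integrating the occupation variables and applying Proposition \ref{densityprofile3}, the average density under $\mu_3^*$ equals $\tfrac12(\text{even}+\text{odd})=-h_3'(u)$, which yields $\int_0^1 x\,\sigma(dx)=f_3(u)=-h_3'(u)$, as claimed.
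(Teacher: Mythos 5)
Your four-step skeleton---shift-invariance of the limit points, stationarity, characterization via Proposition \ref{invariant}, and identification of the mean via Proposition \ref{densityprofile3}---is exactly the structure of Rost's Proposition 5, which is all the paper itself invokes (it says the proof is ``essentially the same as in'' \cite{rost}). But your execution has genuine gaps. In step (i) the inference is a non sequitur: the fact that mesoscopic block averages around $2[un_j/2]$ and around $2[un_j/2]+2$ converge to the same constant $f_3(u)$ (Proposition \ref{convS3}) says nothing about the full local distributions, since two measures on $\{0,1\}^{\mathbb{Z}}$ can have identical block averages and completely different correlations; so $\theta_2\mu_3^*=\mu_3^*$ does not follow from what you wrote. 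What closes this step is the combination of attractiveness with Proposition \ref{densityprofile3}: for the step initial condition $\theta_2\eta_0\leq\eta_0$ coordinatewise, the basic coupling preserves this order, and the R3 dynamics commute with even shifts, so the law of the configuration seen from $k+2$ is stochastically dominated by the law seen from $k$, for every $n$. Subsequential limits are then monotonically coupled, and since Proposition \ref{densityprofile3} forces the limiting pair means $\mathbb{E}\left[\eta(2x)+\eta(2x+1)\right]$ under both limits to equal $-2h_3'(u)$, the ordered coupling must have a.s.\ equal coordinates, giving $\theta_2\mu_3^*=\mu_3^*$. You have both ingredients (you cite attractiveness and Proposition \ref{densityprofile3}) but never combine them; the block-average argument offered instead is invalid.

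Step (ii), which you yourself call the crux, is not an argument but a statement of intent: ``the monotone coupling lets one compare the process with its space-time translates and control the discrepancies'' names no mechanism, and ``the density profile $f_3=-h_3'$ is locally flat at $u$'' is wrong---differentiability of the convex function $h_3$ at $u$ makes $f_3(u)$ well defined (and continuous there), it does not make $f_3$ constant near $u$. Concretely, one workable route is time-equicontinuity of the local correlation functions, i.e.\ the second estimate of Proposition \ref{mu3ineq2} with $l=1$, which gives $\lim_j\mu_3(k_j,n_j+1)=\lim_j\mu_3(k_j,n_j)$ and hence, by your (correct) semigroup identity and the bounded range of one R3 step, $\mu_3^*S_1=\mu_3^*$; alternatively one can prove stationarity for Ces\`aro time-averaged measures by telescoping and then de-average, which requires the same estimate. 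Either way, this is precisely where the substance of Rost's proof lies, and your proposal leaves it empty. Finally, a smaller but real gap in step (iii): the claim that the $\theta_2$-ergodic components of a measure that is stationary and even-shift-invariant are themselves stationary is a standard but nontrivial lemma (equivalently, that the extreme points of the convex set of stationary, even-shift-invariant measures are shift-ergodic); it is needed to bridge from Proposition \ref{invariant}, which speaks only of ergodic measures, to the mixture representation, and it cannot simply be asserted.
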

We can use the results from O'Connell \cite{oconnell} about last-passage percolation (see (\ref{LPPgeom})) to calculate the function $h_3$:
$$ h_3(u) =
\begin{cases}
	-u & u \leq - \frac{2 \beta}{2 - \beta} \\
	\frac{1}{\beta} \left( 2 - \frac{u \beta}{2} - \sqrt{ \left( 4 - u^2 \right) \left( 1 - \beta \right) } \right) - 1 & - \frac{2 \beta}{2 - \beta} \leq u \leq \frac{2 \beta}{2 - \beta} \\
	0 & u \geq \frac{2 \beta}{2 - \beta}
\end{cases}
$$
Since $h_3$ is differentiable we can identify $f_3 = - h_3^{'}$ from Proposition \ref{weakconv3} as
$$ f_3(u) = -h_3^{'}(u) =
\begin{cases}
	1 & u \leq - \frac{2 \beta}{2 - \beta} \\
	\frac{1}{2} - \frac{u}{\beta} \sqrt{ \frac{1 - \beta}{4 - u^2}} & - \frac{2 \beta}{2 - \beta} \leq u \leq \frac{2 \beta}{2 - \beta} \\
	0 & u \geq \frac{2 \beta}{2 - \beta}
\end{cases}
$$
As mentioned after Theorem \ref{hydro} in the models R0, R1 and R2
this is enough to prove the convergence statements at the end of
Theorem \ref{hydro}; this is done using the monotonicity of 
the distribution of $\eta_n(k)$ in $k$. 
However, due to the different behaviour at odd and even sites,
this monotonicity does not hold for the model R3, 
and the average density
$f_3$ does not pick up the density fluctuations between even and odd
sites. Hence without knowing that the model converges to local
equilibrium (which would allow us to calculate the function $a_3$ from
$f_3$) we cannot prove the last part of Theorem \ref{hydro}. The
essential step for proving convergence to local equilibrium is the
following proposition, the proof of which is again the same as in
\cite{rost} (Proposition 6). 
Let $\rho(k;F;n) = \mathbb{P} \left[ \eta_n(k + i) =
1 , i \in F \right] $ for a set $F \subset \mathbb{Z}$.
\begin{proposition} \label{mu3ineq2}
For any finite set $F$ and any $\epsilon > 0$ there exists a $\delta > 0$, $n_0$ such that
$$ \left| \rho\left(2\left[\frac{un}{2} \right];F;n\right) 
- \rho\left(2\left[\frac{\overline{u}n}{2} \right];F;n\right) \right| 
\leq \epsilon $$
for $\left| u - \overline{u} \right| \leq \delta$ and $n \geq
n_0$. Also
$$ \left| \rho\left(2 \left[\frac{\overline{u}n}{2} \right];F;n+l\right) 
- \rho\left(2 \left[\frac{\overline{u}n}{2} \right];F;n\right) \right| \leq \epsilon $$
for $0 \leq l \leq [\delta n]$, $n \geq n_0$.
\end{proposition}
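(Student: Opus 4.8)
The plan is to follow the strategy of Rost's Proposition 6, the two essential ingredients being \emph{attractiveness} of the dynamics (the basic coupling, which as noted in Section 2 preserves the ordering of configurations also for R3) and the macroscopic law of large numbers together with the convexity of $h_3$ from Proposition \ref{convS3}. The only genuinely new feature compared with \cite{rost} is the even/odd parity: the single-site density is no longer monotone over all of $\ZZ$, so the sandwich argument must be run along the even sublattice, which is why the statement is phrased in terms of the sites $2[\tfrac{un}{2}]$. I would first reduce both estimates for a general finite set $F$ to the case $F=\{0\}$, and then prove the resulting one-point estimates.

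For the reduction to one point, fix $F$ and let $\eta^{(0)}$ and $\eta^{(2m)}$ denote the processes started from the step configuration $\mathbbm{1}_{\{x\le 0\}}$ and from its even shift $\mathbbm{1}_{\{x\le 2m\}}$, run under the basic coupling, so that $\eta^{(2m)}_n\ge\eta^{(0)}_n$ pointwise. Since R3 is invariant under shifts by $2$, the field $\eta^{(2m)}_n(x)$ has the law of $\eta^{(0)}_n(x-2m)$, and because $\{\eta_n(k+i)=1,\ i\in F\}$ is an increasing event, attractiveness gives
$$ 0 \le \rho(k-2m;F;n) - \rho(k;F;n) \le \sum_{i\in F}\big[\rho(k-2m+i;\{0\};n) - \rho(k+i;\{0\};n)\big], $$
the inequality being a union bound over the sites of $k+F$ at which the two coupled configurations can differ. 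As $F$ is finite and fixed, it suffices to control each one-point term, uniformly over the parity of $k+i$; the same coupling works with the shift replaced by a time advance, reducing the second (temporal) inequality to its one-point version as well.

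For the one-point spatial estimate I would treat the even-site density $a(j,n)=\mathbb{E}[\eta_n(2j)]$ and the odd-site density $b(j,n)=\mathbb{E}[\eta_n(2j+1)]$ separately. By attractiveness and invariance under even shifts, both $a(\cdot,n)$ and $b(\cdot,n)$ are monotone decreasing in their spatial argument. The pair-sum $a(j,n)+b(j,n)$, on the other hand, telescopes: summing it over $j$ recovers $\mathbb{E}[S(\cdot,n)]$, so the Rost monotone sandwich --- using the a.s.\ and $L^1$ convergence of $\tfrac1n S([un],n)$ to the convex function $h_3$ from Proposition \ref{convS3} --- shows that the pair-sum is equicontinuous in space (this is essentially Proposition \ref{densityprofile3}). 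Since $a$ and $b$ are individually monotone and their sum has small variation over a window of width $\delta n$, each of $a$ and $b$ must itself have small variation there; feeding this into the displayed bound yields the first inequality of the proposition.

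The temporal estimate is where I expect the main difficulty. Here the clean spatial monotonicity is not available in the time direction: although $S(k,n)$ is monotone \emph{increasing} in $n$ (particles only move to the right, so the number of particles to the right of a fixed bond never decreases), the one-point density $\mathbb{E}[\eta_n(k)]$ need not be monotone in $n$. The approach I would take is to exploit the self-similar structure of the step initial condition, under which advancing the time from $n$ to $n+l$ with $l\le\delta n$ corresponds macroscopically to sampling $h_3$ at the slightly displaced argument $\tfrac{un}{n+l}$, i.e.\ to a spatial shift of order $\delta n$. Combining the finite speed of propagation for R3 (in one time-step a particle advances by at most two sites, so the configuration in a fixed window at time $n+l$ depends only on the data in a window of radius $O(l)$) with attractiveness, one sandwiches the time-$(n+l)$ law between space-shifted copies of the time-$n$ law and reduces the temporal estimate to the spatial one already obtained. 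Making this space-time comparison precise --- in particular handling the parity bookkeeping so that even sites are compared only with even sites --- is the step that requires the most care, and is the essential content of Rost's argument that carries over to R3.
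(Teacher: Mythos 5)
Your proposal is correct and takes essentially the same route as the paper: the paper's own ``proof'' is simply the remark that the argument is identical to Rost's Proposition 6, and your reconstruction --- the coupling/union-bound reduction from finite $F$ to one-point functions, the monotone sandwich of the even- and odd-site densities against the convex limit $h_3$ from Proposition \ref{convS3}, and the finite-propagation-speed comparison that stochastically sandwiches the time-$(n+l)$ law between even space-shifts of the time-$n$ law --- is exactly that argument carried out with the parity bookkeeping R3 requires. Nothing essential is missing.
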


Using this proposition and Jensen's inequality we can prove that the
measure $\sigma$ from Proposition \ref{weakconv3} is the unit mass on
$f_3(u)$ and since $b(f_3(u)) = a_3(u)$ this implies convergence to
local equilibrium (see \cite{rost}, Section 4 for the details).
\end{proof}

\subsection{Multi-type models out of equilibrium}

\begin{proof}[Proof of Theorem \ref{2CP}:]
First we want to outline the proof for convergence of $\frac{X^{(i)}(t)}{t}$ in distribution (as before we have $t \in \mathbb{R}_+$ or $t \in \mathbb{N}$ depending on the model). This follows the ideas in \cite{ferrarikipnis}. We want to couple two TASEPs with initial configurations
$$
\eta_0^1(x) =
\begin{cases}
	1 & x \leq 0 \\
	0 & x \geq 1
\end{cases}
\text{ and }
\eta_0^2(x) =
\begin{cases}
	1 & x \leq -1 \\
	0 & x \geq 0
\end{cases}
$$
in two different ways and calculate the difference $\mathbb{E} \left[ S^1([rt],t) \right] - \mathbb{E} \left[ S^2([rt],t) \right]$ in both couplings. $S^1([rt],t)$ and $S^2([rt,t])$ are the number of particles to the right of $[rt]$ at time $t$ in $\eta^1$ and $\eta^2$. Using basic coupling (i.e. using the same Poisson processes $\{ \left( P^x_t
\right)_{t \geq 0} : x \in \mathbb{Z} \}$ or Bernoulli processes $\{ \left( B_n^x \right)_{n \in \mathbb{N}} : x \in \mathbb{Z} \}$ for $\eta^1$ and $\eta^2$) gives
\begin{equation}
\mathbb{E} \left[ S^1([rt],t) \right] - \mathbb{E} \left[ S^2([rt],t) \right] = \mathbb{P} \left[ X^{(i)}(t) > [rt] \right] \label{BC}
\end{equation}
since we can interpret the discrepancy between $\eta^1$ and $\eta^2$ as second class particle (see Section 2.3). This works for all models R0, R1, R2 and R3. The second coupling we want to use is called particle-particle coupling. We label the particles in $\eta^1_0$ and $\eta^2_0$ from right to left and let particles with the same label jump at the same time. Then under this coupling
\begin{equation}
\mathbb{E} \left[ S^1([rt],t) \right] - \mathbb{E} \left[ S^2([rt],t) \right] = \mathbb{P} \left[ \eta_t^1([rt]+1) = 1 \right] \label{PPC}
\end{equation}
in the models R0, R1 and R2. By Theorem \ref{hydro} the right hand side of (\ref{PPC}) converges to $f_i(r)$, so together with (\ref{BC}) we have
$$ \mathbb{P} \left[ X^{(i)}(t) > [rt] \right] \xrightarrow[t \rightarrow \infty]{} f_i(r) $$
which proves convergence in distribution for $i=0,1,2$. However, in the model R3 we cannot use the particle-particle coupling as before because this is no longer a real coupling. If we let particles with the same label jump at the same time in $\eta^1$ and $\eta^2$ then the dynamics of the $\eta^2$ process are different from the $\eta^1$ process: In $\eta^2$ we update odd sites first and then even sites. We denote by $\mathbb{E}_{PP}$ the expectation in $\eta^1$ and $\eta^2$ if particles with the same label jump at the same time in $\eta^1$ and $\eta^2$ where we update in such a way that $\eta^1$ is still a TASEP with update rule R3. Then we have
$$ \mathbb{E}_{PP} \left[ S^1([rt],t) \right] = \mathbb{E} \left[ S^1([rt],t) \right] $$
Notice that starting the second process with updating even sites does not change anything as there is no particle on an even site with an adjacent empty site in the initial configuration. If we remove the last update of even sites at time $t$ this changes the value of $S^2([rt],t)$ if there is a jump from site $[rt]$ to site $[rt] + 1$ during this update. But there can only be a jump from $[rt]$ to $[rt] + 1$ while updating the even sites if $[rt]$ is even. Let us therefore consider odd sites $2 [ \frac{rt}{2} ] + 1$ first.
We get
\begin{equation} \label{EPP}
\mathbb{E}_{PP} \left[ S^2 \left( 2 \left[ \frac{rt}{2} \right] + 1,t \right) \right] = \mathbb{E} \left[ S^2\left( 2 \left[ \frac{rt}{2} \right] + 1,t \right) \right]
\end{equation}
and we still have
$$ \mathbb{E}_{PP} \left[ S^1\left( 2 \left[ \frac{rt}{2} \right] + 1,t \right) \right] - \mathbb{E}_{PP} \left[ S^2\left( 2 \left[ \frac{rt}{2} \right] + 1,t \right) \right] = \mathbb{P} \left[ \eta^1_t \left( 2 \left[ \frac{rt}{2} \right] + 2 \right) = 1 \right] $$
as before. Hence we get
\begin{align*}
\mathbb{P} \left[ X^{(3)}(t) > 2 \left[ \frac{rt}{2} \right] + 1 \right]
&= \mathbb{E} \left[ S^1\left( 2 \left[ \frac{rt}{2} \right] + 1,t \right) \right] - \mathbb{E} \left[ S^2\left( 2 \left[ \frac{rt}{2} \right] + 1,t \right) \right] \\
&= \mathbb{P} \left[ \eta^1_t \left( 2 \left[ \frac{rt}{2} \right] + 2 \right) = 1 \right] \\
&\xrightarrow[t \rightarrow \infty]{} a_3(r)
\end{align*}
by the convergence to local equilibrium (Theorem \ref{hydro}).
But by monotonicity we have
$$ \mathbb{P} \left[ X^{(3)}(t) > 2 \left[ \frac{rt}{2} \right] - 1 \right] \geq \mathbb{P} \left[ X^{(3)}(t) > 2 \left[ \frac{rt}{2} \right] \right] \geq \mathbb{P} \left[ X^{(3)}(t) > 2 \left[ \frac{rt}{2} \right] + 1 \right] $$
Hence
$$ \lim_{t \rightarrow \infty} \mathbb{P} \left[ \frac{X^{(3)}(t)}{t} > r \right] = a_3(r) $$

\begin{remark}
If the second class particle starts on an odd site (with first class particles to the left and holes/third class particles to the right) then
$$ \frac{\widetilde{X}^{(3)}(t)}{t} \xrightarrow[t \rightarrow \infty]{a.s.} \widetilde{U}^{(3)} $$
and $\widetilde{U}^{(3)}$ has distribution function $\frac{a_3(1-\beta)}{1-a_3\beta}$ accordingly.
\end{remark}

Now we want to prove almost sure convergence of the speed of a second
class particle. Our methods follow the approach in
\cite{ferraripimentel}. The idea is to establish a connection between
the path of the second class particle and a competition interface in
the corresponding growth model. The cluster in the growth model can be
divided into two clusters corresponding to events happening to the
right and to the left of the second class particle. The interface
between these two clusters is called the competition interface. Using
results about semi-infinite geodesics it can be shown that this
competition interface has almost surely an asymptotic direction. This
can be used to deduce that the second class particle has almost surely
an asymptotic speed and since we know the distribution of this speed
we can also calculate the distribution of the random angle of the
competition interface. In the following we will describe this method
first for the TASEP in continuous time, as given in \cite{ferraripimentel}, 
and then explain the
adjustments that have to be made in the TASEPs in discrete time.
\begin{figure}[t]
\begin{center}
\begin{pspicture}(-3.5,-4.5)(9,1)
	\psline{->}(-3.5,0)(2.5,0)
	\psdots(-3,0)(-2,0)(-1,0)(0,0)(1,0)(2,0)
	\cnode(-3,0.5){0.3}{A}
	\cnode(-2,0.5){0.3}{A}
	\cnode(0,0.5){0.3}{A}
	\uput{0.2}[270](-3,0){-2}
	\uput{0.2}[270](-2,0){-1}
	\uput{0.2}[270](-1,0){0}
	\uput{0.2}[270](0,0){1}
	\uput{0.2}[270](1,0){2}
	\uput{0.2}[270](2,0){3}
	\psarc{-}(-0.5,0.5){0.9}{155}{205}
	\psarc{-}(-0.5,0.5){0.9}{-25}{25}
	\uput[0](0.3,0.7){*}
	\psline{->}(3,0)(9,0)
	\psdots(3.5,0)(4.5,0)(5.5,0)(6.5,0)(7.5,0)(8.5,0)
	\cnode(3.5,0.5){0.3}{A}
	\cnode(4.5,0.5){0.3}{A}
	\cput(5.5,0.5){2}
	\uput{0.2}[270](3.5,0){-2}
	\uput{0.2}[270](4.5,0){-1}
	\uput{0.2}[270](5.5,0){0}
	\uput{0.2}[270](6.5,0){1}
	\uput{0.2}[270](7.5,0){2}
	\uput{0.2}[270](8.5,0){3}	
	\psline{->}(-3.5,-2)(2.5,-2)
	\psdots(-3,-2)(-2,-2)(-1,-2)(0,-2)(1,-2)(2,-2)
	\cnode(-3,-1.5){0.3}{A}
	\cnode(-2,-1.5){0.3}{A}
	\cnode(1,-1.5){0.3}{A}
	\uput{0.2}[270](-3,-2){-2}
	\uput{0.2}[270](-2,-2){-1}
	\uput{0.2}[270](-1,-2){0}
	\uput{0.2}[270](0,-2){1}
	\uput{0.2}[270](1,-2){2}
	\uput{0.2}[270](2,-2){3}
	\psarc{-}(0.5,-1.5){0.9}{155}{205}
	\psarc{-}(0.5,-1.5){0.9}{-25}{25}
	\uput[0](1.3,-1.3){*}
	\psarc{<-}(0.5,-1.5){0.5}{40}{140}
	\uput{0.2}[180](-3.5,-1.9){A}
	\psline{->}(3,-2)(9,-2)
	\psdots(3.5,-2)(4.5,-2)(5.5,-2)(6.5,-2)(7.5,-2)(8.5,-2)
	\cnode(3.5,-1.5){0.3}{A}
	\cnode(4.5,-1.5){0.3}{A}
	\cput(6.5,-1.5){2}
	\uput{0.2}[270](3.5,-2){-2}
	\uput{0.2}[270](4.5,-2){-1}
	\uput{0.2}[270](5.5,-2){0}
	\uput{0.2}[270](6.5,-2){1}
	\uput{0.2}[270](7.5,-2){2}
	\uput{0.2}[270](8.5,-2){3}
	\psarc{<-}(6,-1.5){0.5}{40}{140}
	\psline{->}(-3.5,-4)(2.5,-4)
	\psdots(-3,-4)(-2,-4)(-1,-4)(0,-4)(1,-4)(2,-4)
	\cnode(-3,-3.5){0.3}{A}
	\cnode(-1,-3.5){0.3}{A}
	\cnode(0,-3.5){0.3}{A}
	\uput{0.2}[270](-3,-4){-2}
	\uput{0.2}[270](-2,-4){-1}
	\uput{0.2}[270](-1,-4){0}
	\uput{0.2}[270](0,-4){1}
	\uput{0.2}[270](1,-4){2}
	\uput{0.2}[270](2,-4){3}
	\psarc{-}(-1.5,-3.5){0.9}{155}{205}
	\psarc{-}(-1.5,-3.5){0.9}{-25}{25}
	\uput[0](-0.7,-3.3){*}
	\psarc{<-}(-1.5,-3.5){0.5}{40}{140}
	\uput{0.2}[180](-3.5,-3.9){B}
	\psline{->}(3,-4)(9,-4)
	\psdots(3.5,-4)(4.5,-4)(5.5,-4)(6.5,-4)(7.5,-4)(8.5,-4)
	\cnode(3.5,-3.5){0.3}{A}
	\cnode(5.5,-3.5){0.3}{A}
	\cput(4.5,-3.5){2}
	\uput{0.2}[270](3.5,-4){-2}
	\uput{0.2}[270](4.5,-4){-1}
	\uput{0.2}[270](5.5,-4){0}
	\uput{0.2}[270](6.5,-4){1}
	\uput{0.2}[270](7.5,-4){2}
	\uput{0.2}[270](8.5,-4){3}
	\psarc{<->}(5,-3.5){0.5}{40}{140}
\end{pspicture}
\end{center}
\setcaptionmargin{1cm}
\caption{Pair representation of the second class particle: The figures on the left show the system with the pair; the figures on the right show the corresponding multi-type system} \label{HP2CP}
\end{figure}
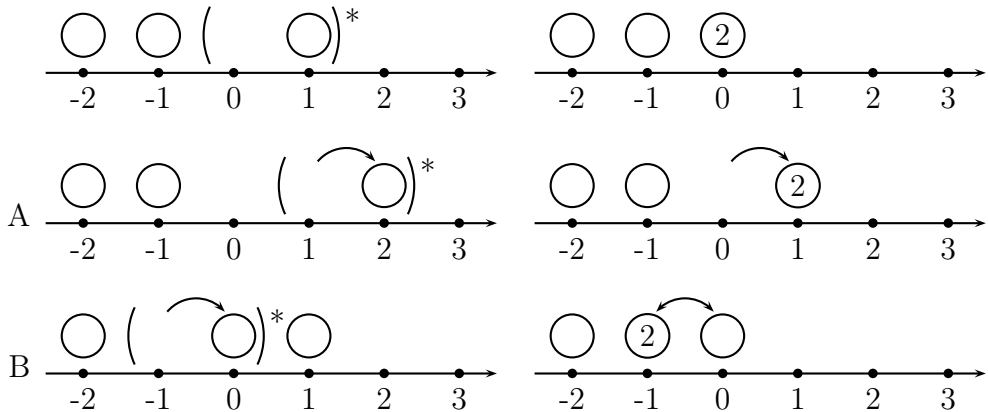
\\
In order to establish the connection between the second class particle and the competition interface we represent the second class particle as a pair consisting of a hole and a particle. This reduces our multi-type model to a model consisting only of particles and holes and allows us to use the connection to last-passage percolation developed in Section 2.2. We let the pair move as follows: If the particle of the pair jumps to the right the pair moves to the right (A) and if a particle jumps from the left into the hole of the pair the pair moves to the left (B), see Figure \ref{HP2CP}. Then the pair behaves indeed like a second class particle.
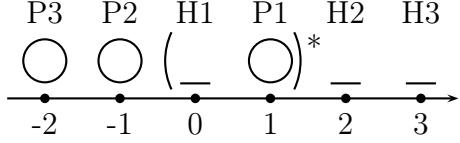
\begin{figure}[t]
\begin{center}
\begin{pspicture}(-4,-0.5)(3,1.5)
	\psline{->}(-3.5,0)(2.5,0)
	\psdots(-3,0)(-2,0)(-1,0)(0,0)(1,0)(2,0)
	\cnode(-3,0.5){0.3}{A}
	\cnode(-2,0.5){0.3}{A}
	\cnode(0,0.5){0.3}{A}
	\psline(-1.2,0.2)(-0.8,0.2)
	\psline(0.8,0.2)(1.2,0.2)
	\psline(1.8,0.2)(2.2,0.2)
	\uput{0}[90](-1,1){H1}
	\uput{0}[90](1,1){H2}
	\uput{0}[90](2,1){H3}
	\uput{0}[90](0,1){P1}
	\uput{0}[90](-2,1){P2}
	\uput{0}[90](-3,1){P3}
	\uput{0.2}[270](-3,0){-2}
	\uput{0.2}[270](-2,0){-1}
	\uput{0.2}[270](-1,0){0}
	\uput{0.2}[270](0,0){1}
	\uput{0.2}[270](1,0){2}
	\uput{0.2}[270](2,0){3}
	\psarc{-}(-0.5,0.5){0.9}{155}{205}
	\psarc{-}(-0.5,0.5){0.9}{-25}{25}
	\uput[0](0.3,0.7){*}
\end{pspicture}
\end{center}
\setcaptionmargin{1cm}
\caption{Pair representation of the second class particle with particles labelled from right to left and holes labelled from left to right} \label{HP2CP2}
\end{figure}
If we label the particles from right to left and the holes from left to right as in Figure \ref{HP2CP2} then we can consider the process
  $(\varphi_n)_{n \geq 0}$ giving the labels of the pair after the
  $n$th jump involving the pair. We have $\varphi_0 = (1,1)$, as initially the pair consists of hole 1 and particle 1, and
  $\varphi_{n+1} - \varphi_n \in \{ (0,1) , (1,0) \}$. $\varphi_n$ satisfies the recursion formula
\begin{equation}
\varphi_{n+1}=
\begin{cases}
 \varphi_n + (1,0) & T(\varphi_n + (1,0)) < T(\varphi_n + (0,1)) \\
 \varphi_n + (0,1) & T(\varphi_n + (1,0)) > T(\varphi_n + (0,1))
\end{cases} \label{phi}
\end{equation}
If the first
  label increases the second class particle moves one step to the right
  and if the second label increases the second class particle moves
  one step to the left.
\begin{remark}
Note that by inserting an extra site we changed the parity of the sites to the right of and including the particle of the pair: The first hole to the right of the second class particle is at an odd site while the first hole to the right of the pair is at an even site. The second class particle itself is at an even site while the particle in the pair is at an odd site. This will be important for model R3.
\end{remark}

Now we want to define geodesics in the last-passage percolation model
to define a competition interface in the growth model. For $z, z' \in
\ZZ_+^2$ the heaviest increasing path from $z$ to $z'$ (i.e. the path
that achieves the maximum in $R(z,z')$) is called the geodesic from
$z$ to $z'$. Note that in the model in continuous time geodesics are
unique. (In the models in discrete time we will need a rule to break
ties to achieve uniqueness of the geodesics). A semi-infinite
geodesic starting at $z$ is a path $\pi = (z,z_1,z_2,\ldots)$ in
$\ZZ_+^2$ such that for every $z'=z_k, z''=z_l \in \pi$ the geodesic
from $z'$ to $z''$ is $(z_k,z_{k+1},z_{k+2},\ldots,z_l) \subset
\pi$. For $\alpha \in \left[0,90\text{\textdegree}\right]$ a
$\alpha$-geodesic is a semi-infinte geodesic with direction
$\alpha$. Now we colour every block $Q(i,j) = \left( \left. i-1, i
\right] \right. \times \left( \left. j-1, j \right] \right.$ in
    $(\mathbb{R}_+)^2 \backslash [0,1]^2$ either red if the geodesic
    from $(1,1)$ to $(i,j)$ passes through $(1,2)$ or blue if it
    passes through $(2,1)$. The interface between these clusters is
    called the competition interface and an induction argument
    together with the recursion (\ref{phi}) shows that it is given by
    the process $\varphi_n$, see Proposition 3 in
    \cite{ferraripimentel}. Using results about the existence and
    uniqueness of $\alpha$-geodesics (Propositions 7,8 and 9 in
    \cite{ferraripimentel}) it can be shown that $\varphi_n$ has
    almost surely an asymptotic direction and we can conclude that the second
    class particle has almost surely an asymptotic speed (Propositions
    4 and 5 in \cite{ferraripimentel}). Now we want to apply these
    methods to the discrete time TASEPs R1,R2 and R3.

\subsubsection*{R1}
The last-passage percolation model for rule R1 was described
in Section \ref{percrep}, and in particular just after 
(\ref{T1}).
Now to adapt to
the initial configuration
$$\eta_0(x) =
\begin{cases}
	1 & x \leq -1 \\
	0 & x = 0 \\
	1 & x = 1 \\
	0 & x \geq 2
\end{cases}
$$ we have to remove the `$+1$' weight from the edge between $(1,1)$
and $(2,1)$ and the weight from the vertex $(1,1)$ as in the initial
configuration we are considering particle 1 has already jumped over
hole 1. We colour $Q(1,2)$ red, $Q(2,1)$ blue and every other block
$Q(i,j)$ in $(\mathbb{R}_+)^2 \backslash \left[0,1\right]^2$ either
red if
$$ \widetilde{R}^{(1)}((1,2),(i,j)) > \widetilde{R}^{(1)}((2,1),(i,j)) $$
and blue if
$$ \widetilde{R}^{(1)}((1,2),(i,j)) \leq \widetilde{R}^{(1)}((2,1),(i,j)) $$
(recall the defintion of $R$ in (\ref{R}); $\widetilde{R}^{(1)}$ is the corresponding quantity in model R1 with the changes mentioned above). This implies that if $Q(i,j+1)$ is red and $Q(i+1,j)$ is blue, then $Q(i+1,j+1)$ is red iff
\begin{equation}
\widetilde{T}^{(1)}(i,j+1) \geq \widetilde{T}^{(1)}(i+1,j) \label{T1in1}
\end{equation}
and blue iff
\begin{equation}
\widetilde{T}^{(1)}(i,j+1) < \widetilde{T}^{(1)}(i+1,j) \label{T1in2}
\end{equation}
where $\widetilde{T}^{(1)}$ is defined as in (\ref{T1}) but now in the model R1 with the modifications described above. The line $\varphi_n^{(1)}$ separating the two clusters is again called the competition
interface and due to the way we defined the red and blue cluster we
have again that the competition interface corresponds to the path of the second class particle. We can rewrite (\ref{T1in1}) and (\ref{T1in2}) in terms of $\varphi_n^{(1)}$ as
\begin{equation} \label{phi1}
\varphi_{n+1}^{(1)}=
\begin{cases}
 \varphi_n^{(1)} + (1,0) & \widetilde{T}^{(1)}(\varphi_n^{(1)} + (1,0)) \leq \widetilde{T}^{(1)}(\varphi_n^{(1)} + (0,1)) \\
 \varphi_n^{(1)} + (0,1) & \widetilde{T}^{(1)}(\varphi_n^{(1)} + (1,0)) > \widetilde{T}^{(1)}(\varphi_n^{(1)} + (0,1))
\end{cases}
\end{equation}
Note the similarity of (\ref{phi}) with (\ref{phi1}); the difference comes from the fact that in the models
in discrete time ties are possible. As in \cite{ferraripimentel} we
want to prove that this competition interface has almost surely an
asymptotic direction. First we note that the results in
\cite{ferraripimentel} about geodesics still hold with geometric weights attached to
the vertices instead of exponential weights. Secondly, the results
still hold in a model where `$+1$' weights are attached to
\textit{every} horizontal edge in $\ZZ_+^2$ as these weights do not
affect the geodesics (they just give a constant weight to every path
from $z$ to $z'$, $z,z' \in \ZZ_+^2$). The only difference in our case
is that there is no weight attached to the edge from $(1,1)$ to
$(2,1)$. But this local change does not affect the almost sure
statements in Propositions 7,8 and 9 in \cite{ferraripimentel}. We
conclude that the competition interface in our model has almost sure
an asymptotic direction and it follows from arguments analogous to the
ones in \cite{ferraripimentel} that the speed of the second class
particle converges almost surely.

\subsubsection*{R2}
The result for model R2 follows again from the symmetry between R1 and R2.

\subsubsection*{R3}
For the purpose of this section it is convenient to change the
last-passage percolation model corresponding to model R3 a little
bit. Instead of updating the even sites and then the odd sites during 
a single time-step, we separate the two batches of 
updates by a half time-step. Then
the percolation model corresponding to the initial configuration
$$\eta_0(x) =
\begin{cases}
	1 & x \leq 0 \\
	0 & x \geq 1
\end{cases}
$$
has no weights attached to the edges, while the weights at the vertices are 
geometric with an extra $\frac{1}{2}$ added. As noticed in the beginning of
this section, introducing an extra site into this model changes the
parity of some sites. In order to deal with this we attach a single
`$+\frac{1}{2}$' weight to the edge from $(1,1)$ to $(1,2)$ in the
percolation model corresponding to the initial configuration
$$\eta_0(x) =
\begin{cases}
	1 & x \leq -1 \\
	0 & x = 0 \\
	1 & x = 1 \\
	0 & x \geq 2
\end{cases}
$$
(where we also remove the weight from the vertex $(1,1)$). This ensures that we apply
even/odd updates to the left of the particle of the pair and odd/even
updates to the right. Then the movement of the pair $\varphi_n^{(3)}$ corresponds to
the movement of the second class particle in a model with even/odd updates.

Again we colour $Q(1,2)$ red, $Q(2,1)$ blue and now every other block $Q(i,j)$ in $(\mathbb{R}_+)^2 \backslash \left[0,1\right]^2$ either red if
\begin{equation} \label{R31}
\widetilde{R}^{(3)}((1,2),(i,j)) \geq \widetilde{R}^{(3)}((2,1),(i,j))
\end{equation}
and blue if
\begin{equation} \label{R32}
\widetilde{R}^{(3)}((1,2),(i,j)) < \widetilde{R}^{(3)}((2,1),(i,j))
\end{equation}
The interface between these clusters is again the competition interface and is given by $\varphi_n^{(3)}$. In terms of $\varphi_n^{(3)}$ we get from (\ref{R31}) and (\ref{R32}) that
\begin{equation} \label{phi3}
\varphi_{n+1}^{(3)}=
\begin{cases}
 \varphi_n^{(3)} + (1,0) & \widetilde{T}^{(3)}(\varphi_n^{(3)} + (1,0)) < \widetilde{T}^{(3)}(\varphi_n^{(3)} + (0,1)) \\
 \varphi_n^{(3)} + (0,1) & \widetilde{T}^{(3)}(\varphi_n^{(3)} + (1,0)) > \widetilde{T}^{(3)}(\varphi_n^{(3)} + (0,1))
\end{cases}
\end{equation}
$\widetilde{R}^{(3)}$ and $\widetilde{T}^{(3)}$ are defined as in (\ref{R}) and (\ref{T3}) but now we are considering the model R3 with the modifications described above. Due to the additional `$+\frac{1}{2}$' weight on the edge from $(1,1)$ to $(1,2)$ the competition interface never encounters any ties in this model, i.e. $\widetilde{T}^{(3)}(\varphi_n^{(3)} + (1,0)) = \widetilde{T}^{(3)}(\varphi_n^{(3)} + (0,1))$ does not occur. As in R1, the local change given by the extra `$+\frac{1}{2}$' edge-weight in
this model does not change the almost sure statements in Propositions
7,8 and 9 in \cite{ferraripimentel}. The rest of the argument is the
same as for model R1.


\end{proof}

\begin{remark} \label{proofdirR1R3}
We can use the known distributions of the speeds of the second class particles
(see Theorem \ref{2CP}) together with the hydrodynamic limit results (see Theorem \ref{hydro}
and Remark \ref{shape}) to prove the interesting result mentioned in Remark \ref{dirR1R3}
that the distributions of the asymptotic direction of the competition interfaces in the models
R1 and R3 are the same:
\begin{proof}
The proof exploits the connections made in Proposition 5 in \cite{ferraripimentel}. Similar to
\cite{ferraripimentel} we let $\psi^{(1)}_t = (I^{(1)}(t),$
$J^{(1)}(t))$, $\psi^{(3)}_t = (I^{(3)}(t),J^{(3)}(t))$ be the
position of the competition interface (i.e. the labels of the pair) at
time $t$ and denote by $\theta^{(1)}, \theta^{(3)} \in
\left[0,90\text{\textdegree}\right]$ the random angle of the
competition interface for model R1 and R3, i.e.
$$ \lim_{t \rightarrow \infty} \frac{\psi^{(i)}_t}{\left| \psi^{(i)}_t \right|} = e^{i\theta^{(i)}} = \left(\cos(\theta^{(i)}), \sin(\theta^{(i)})\right) \text{ for } i=1,3 $$
By the arguments in the previous sections we know that these limits exist almost surely. Using the asymptotic shape of the growth models given in Remark \ref{shape} we also have
$$ \lim_{t \rightarrow \infty} \frac{\psi^{(i)}_t}{t} = j^{(i)}(\theta^{(i)})e^{i\theta^{(i)}} \text{ a.s. for } i=1,3 $$
where $j^{(i)}(\theta^{(i)})$ is the distance from the origin to the intersection of the line given by $\{ (u,v) \in \mathbb{R}_+^2 : \tan(\theta^{(i)}) = \frac{v}{u} \}$ and the asymptotic \textit{growth interface} $(x,g_i(x))$ for $i=1,3$. With the formulas for $g_1$ and $g_3$ in (\ref{g1}) and (\ref{g3}) we can calculate $j^{(1)}(\theta^{(1)})$ and $j^{(3)}(\theta^{(3)})$ explicitly:
$$ j^{(1)}(\theta^{(1)}) = \frac{\beta}{\left( \sqrt{ \left( 1-\beta \right) \sin(\theta^{(1)})} + \sqrt{\cos(\theta^{(1)})} \right)^2} $$
and
$$ j^{(3)}(\theta^{(3)}) = \frac{2\beta \left( 2-\beta \right)}{\left( \left( 2-\beta \right) \sqrt{\sin(\theta^{(3)})}
+ 2\sqrt{\left( 1-\beta \right) \cos(\theta^{(3)})} \right)^2 + \beta^2 \cos(\theta^{(3)})} $$
By the connection between the path of the second class particle and the competition interface ($X^{(i)}(t) = I^{(i)}(t) - J^{(i)}(t)$, $i=1,3$, since the second class particle moves to the right iff the first label of the pair increases and to the left iff the second label of the pair increases) it follows that
$$ \lim_{t \rightarrow \infty} \frac{X^{(i)}(t)}{t} = \lim_{t \rightarrow \infty} \frac{I^{(i)}(t) - J^{(i)}(t)}{t} = j^{(i)}(\theta^{(i)}) \left( \cos(\theta^{(i)}) - \sin(\theta^{(i)}) \right) \stackrel{\mathrm{def}}= l_i(\theta^{(i)}) $$
almost surely for $i=1,3$. Using the known distributions of the speed of the second class particle in model R1 and R3 (see Theorem \ref{2CP}) we can calculate
$$ \mathbb{P}\left[ \theta^{(1)} \leq \alpha \right] = \mathbb{P}\left[ l_1(\theta^{(1)}) \geq l_1(\alpha) \right] = f_1(l_1(\alpha)) $$
and
$$ \mathbb{P}\left[ \theta^{(3)} \leq \alpha \right] = \mathbb{P}\left[ l_3(\theta^{(3)}) \geq l_3(\alpha) \right] = a_3(l_3(\alpha)) $$
A calculation shows that
$$ f_1(l_1(\alpha)) = a_3(l_3(\alpha)) $$
\end{proof}
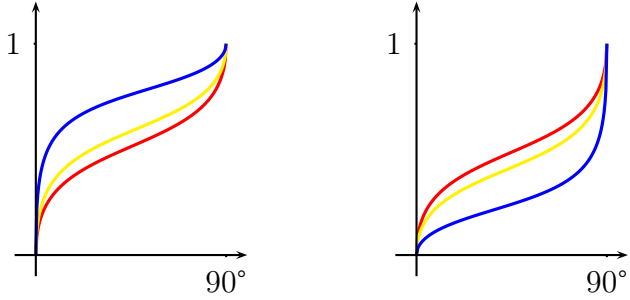
\begin{figure}[t]
\begin{pspicture}(-3,-1)(1,3)
\psset{xunit=0.8pt, yunit=80pt}
\psplot[linewidth=1.2pt, plotpoints=200, linecolor=red]{0}{90}{x cos 0.1 div sqrt x sin 0.9 mul 0.1 div sqrt add 2 exp -1 exp x cos x sin sub mul -1 mul 1 add -1 exp 0.9 mul sqrt -1 mul 1 add 0.1 div}
\psplot[linewidth=1.2pt, plotpoints=200, linecolor=yellow]{0}{90}{x cos 0.5 div sqrt x sin 0.5 mul 0.5 div sqrt add 2 exp -1 exp x cos x sin sub mul -1 mul 1 add -1 exp 0.5 mul sqrt -1 mul 1 add 0.5 div}
\psplot[linewidth=1.2pt, plotpoints=200, linecolor=blue]{0}{90}{x cos 0.9 div sqrt x sin 0.1 mul 0.9 div sqrt add 2 exp -1 exp x cos x sin sub mul -1 mul 1 add -1 exp 0.1 mul sqrt -1 mul 1 add 0.9 div}
\psline{->}(0,-0.1)(0,1.2)
\psline{->}(-10,0)(100,0)
\psline(90,0)(90,-0.01)
\psline(-1,1)(0,1)
\uput{0.2}[270](90,0){90\textdegree}
\uput{0.2}[180](0,1){1}
\psplot[linewidth=1.2pt, plotpoints=200, linecolor=red]{180}{270}{x 180 sub sin 0.1 div sqrt x 180 sub cos 0.9 mul 0.1 div sqrt add 2 exp -1 exp x 180 sub cos x 180 sub sin sub mul 1 add -1 exp 0.9 mul sqrt -1 mul 1 add 0.1 div -1 mul 1 add}
\psplot[linewidth=1.2pt, plotpoints=200, linecolor=yellow]{180}{270}{x 180 sub sin 0.5 div sqrt x 180 sub cos 0.5 mul 0.5 div sqrt add 2 exp -1 exp x 180 sub cos x 180 sub sin sub mul 1 add -1 exp 0.5 mul sqrt -1 mul 1 add 0.5 div -1 mul 1 add}
\psplot[linewidth=1.2pt, plotpoints=200, linecolor=blue]{180}{270}{x 180 sub sin 0.9 div sqrt x 180 sub cos 0.1 mul 0.9 div sqrt add 2 exp -1 exp x 180 sub cos x 180 sub sin sub mul 1 add -1 exp 0.1 mul sqrt -1 mul 1 add 0.9 div -1 mul 1 add}
\psline{->}(180,-0.1)(180,1.2)
\psline{->}(170,0)(280,0)
\psline(270,0)(270,-0.01)
\psline(179,1)(180,1)
\uput{0.2}[270](270,0){90\textdegree}
\uput{0.2}[180](180,1){1}
\end{pspicture}
\setcaptionmargin{1cm}
\caption{Distribution function for the random angle of the competition interface in the models R1 and R3 (left) and R2 (right) for $\beta=0.1$ (red), $\beta=0.5$ (yellow) and $\beta=0.9$ (blue)}
\end{figure}
\end{remark}

\begin{proof}[Proof of Theorem \ref{duality2}:]
Let the operators $\sigma_n$ be defined by
$$ \sigma_n Y =
\begin{cases}
	\tau_n Y & Y_n < Y_{n+1} \\
	Y & \text{otherwise}
\end{cases}
$$
where $\tau_n$ exchanges $Y_n$ and $Y_{n+1}$ in $Y$. The proof of the following general Lemma is the same as in \cite{amirangelvalko} (Lemma 3.1).
\begin{lemma}\label{sigma}
For a fixed sequence $i_1,\ldots,i_k$ in $\mathbb{Z}$ we have
$$ \sigma_{i_k} \cdots \sigma_{i_1} \overset{d}{=} \left( \sigma_{i_1} \cdots \sigma_{i_k}  \right)^{-1} $$
\end{lemma}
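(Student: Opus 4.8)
The plan is to reduce the statement to a deterministic combinatorial identity about the operators $\sigma_n$ acting on the initial configuration $Y_n=n$ used in Theorem~\ref{duality2}, and then to read off the distributional equality. The first observation is that each $\sigma_n$ is \emph{idempotent}, $\sigma_n^2=\sigma_n$, and that for \emph{any} permutation $\pi$ one has $\sigma_n\pi=\pi$ exactly when $\pi$ has a descent at $n$ (that is, $\pi_n>\pi_{n+1}$) and $\sigma_n\pi=\pi s_n$ otherwise, where $s_n=\tau_n$ is the adjacent transposition. Writing $\ell$ for the number of inversions and $*$ for the Demazure (greedy) product on the symmetric group, this says precisely that $\sigma_n$ acts as right Demazure multiplication, $\sigma_n\pi=\pi*s_n$, since a descent of $\pi$ at $n$ is the same as the condition $\ell(\pi s_n)<\ell(\pi)$. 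Because only finitely many indices occur in $i_1,\dots,i_k$, and a finite word alters only finitely many coordinates of the configuration on $\mathbb{Z}$, everything takes place inside a finite symmetric group and the standard facts about $\ell$ and about $*$ apply.

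Granting this, I would prove by a short induction on $k$ that applying the operators to the identity yields a Demazure product, namely $\sigma_{i_k}\cdots\sigma_{i_1}(\mathrm{id})=s_{i_1}*\cdots*s_{i_k}$, and symmetrically $\sigma_{i_1}\cdots\sigma_{i_k}(\mathrm{id})=s_{i_k}*\cdots*s_{i_1}$. The lemma then follows from the single algebraic fact that \emph{inversion is an anti-automorphism of the Demazure product}, i.e.\ $(a*s_n)^{-1}=s_n*a^{-1}$ for every permutation $a$ and generator $s_n$. Iterating this (using $s_n^{-1}=s_n$) gives $(s_{i_1}*\cdots*s_{i_k})^{-1}=s_{i_k}*\cdots*s_{i_1}$, which is exactly the asserted identity $\sigma_{i_k}\cdots\sigma_{i_1}(\mathrm{id})=\bigl(\sigma_{i_1}\cdots\sigma_{i_k}(\mathrm{id})\bigr)^{-1}$.

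The technical heart, and essentially the only place a computation is needed, is the verification of $(a*s_n)^{-1}=s_n*a^{-1}$. I would split on whether $a$ has a right ascent or a right descent at $n$, using that $\ell(s_na^{-1})=\ell\bigl((as_n)^{-1}\bigr)=\ell(as_n)$, so that a right descent of $a$ at $n$ coincides with a left descent of $a^{-1}$ at $n$. In the ascent case both sides equal $s_na^{-1}$, and in the descent case both sides equal $a^{-1}$; each is a one-line check with the length function. I expect this small case analysis, together with correctly identifying $\sigma_n$ as the $0$-Hecke generator, to be the main (though not deep) obstacle.

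For the distributional statement, note that the identity just proved is deterministic and holds for \emph{every} fixed word $i_1,\dots,i_k$ when the operators act on the configuration $Y_n=n$. Hence, once the word is taken to be the random sequence of firing edges produced by the dynamics, the identity holds realization by realization, and a fortiori in distribution; this is what $\overset{d}{=}$ records and what feeds into Theorem~\ref{duality2}. The point that requires care is conceptual rather than computational: the use of the identity initial condition is genuinely essential, since for an arbitrary starting permutation $Y$ one only has $\sigma_{i_k}\cdots\sigma_{i_1}(Y)=Y*(s_{i_1}*\cdots*s_{i_k})$, and inversion interchanges the two sides precisely when $Y=\mathrm{id}$.
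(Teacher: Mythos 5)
Your proof is correct, and it in fact proves more than the lemma asserts: for every fixed word $i_1,\ldots,i_k$ you obtain the pointwise identity $\sigma_{i_k}\cdots\sigma_{i_1}(\mathrm{id})=\bigl(\sigma_{i_1}\cdots\sigma_{i_k}(\mathrm{id})\bigr)^{-1}$, so the ``$\overset{d}{=}$'' is automatic once one conditions on the (random) update sites, which is exactly how the lemma is consumed in the proof of Theorem \ref{duality2}. All three of your steps are sound: $\sigma_n\pi=\pi*s_n$ is the correct algebraic reading of the paper's rule (swap at $n$ iff $\pi_n<\pi_{n+1}$, iff $\ell(\pi s_n)>\ell(\pi)$); a finite word moves only finitely many coordinates, so working in a finite symmetric group is legitimate; and the two-case check of $(a*s_n)^{-1}=s_n*a^{-1}$ via $\ell(s_na^{-1})=\ell(as_n)$ is right, and iterates to $(s_{i_1}*\cdots*s_{i_k})^{-1}=s_{i_k}*\cdots*s_{i_1}$. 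This is, however, a genuinely different route from the paper's: the paper offers no argument of its own, deferring to Lemma 3.1 of Amir--Angel--Valk\'o, where the statement is established by induction on $k$, working directly at the level of the sorting operators and the particle trajectories, with no Coxeter-theoretic apparatus. Your $0$-Hecke/Demazure packaging buys the stronger deterministic equality, a transparent reason why ties and idempotency cause no trouble, and a precise explanation of why the identity initial condition is essential (via $\sigma_{i_k}\cdots\sigma_{i_1}(Y)=Y*(s_{i_1}*\cdots*s_{i_k})$ for general $Y$); what it costs is a silent reliance on the associativity and well-definedness of the Demazure product (equivalently, that the $\sigma_i$ satisfy the idempotent, commutation and braid relations), which your iteration uses when it mixes left and right $*$-multiplications --- that fact is standard but should be cited or proved if you want the argument self-contained. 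One further point worth stating explicitly, since the lemma's notation obscures it: each $\sigma_n$ is idempotent, hence not invertible as a map, so the inverse in the statement can only mean --- as you correctly interpret it --- inversion of the permutation obtained by applying the word to the identity configuration $Y_n(0)=n$.
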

We have the relation $Y_{X_n(m)}(m) = n$ between the $X$ and the $Y$ process. Since $\beta < 1$ each site has a positive probability that no jump occurs at that site at any given time. At each time-step these sites separate $\mathbb{Z}$ into finite intervals and the events on these intervals during that time-step are independent. In the model with updates from right to left we apply a finite sequence of operators $\sigma_{i_1} \cdots \sigma_{i_k}$ where $i_1,\ldots,i_k$ is an increasing sequence (since we update from right to left). Lemma \ref{sigma} states that applying $\sigma_{i_1} \cdots \sigma_{i_k}$ is the same (in distribution) as applying $\sigma_{i_k} \cdots \sigma_{i_1}$ (i.e. updating from left to right) and taking the inverse permutation. But given the configuration $Y_n(0)$ and performing the updates from left to right we get $Y_{n}^{(2)}(1)$ and this is the inverse permutation of $X_{n}^{(2)}(1)$. So
$$ Y_{n}^{(1)}(1) \overset{d}{=} X_{n}^{(2)}(1)$$
Inductively we get that this holds for all $m>0$. The other three parts follow in the same way. In the case with even/odd updates, $i_1,\ldots,i_k$ is a sequence such that there exists a $1 \leq j \leq k+1$ such that $i_l$ is odd for $l < j$ and $i_l$ is even for $l \geq j$.
\end{proof}

In order to prove Theorem \ref{duality} we will need the following Lemma. We state it here for the model R1, but analogous results hold for the other models as well. The Lemma corresponds to Lemma 4.1 in \cite{amirangelvalko}.
\begin{lemma}\label{speedshift}
Consider two TASEPs, $Y^{(1)}$ and $\widetilde{Y}^{(1)}$, as functions of the same Bernoulli points on $\mathbb{Z} \times \mathbb{N}$ (i.e. under basic coupling). We set $Y_{n}^{(1)}(0) = n$ and $\widetilde{Y}_{n}^{(1)}(0) = \sigma_j \cdots \sigma_{j+k} Y_{n}^{(1)}(0)$ for some $j \in \mathbb{Z}$ and $k \geq 0$. Let $\{ U_{n}^{(1)} \}$ be the \textit{speed process} of $Y^{(1)}$ and $\{ \widetilde{U}_{n}^{(1)} \}$ be the \textit{speed process} of $\widetilde{Y}^{(1)}$. Then $\widetilde{U}^{(1)} = \sigma_{j+k} \cdots \sigma_j U^{(1)}$.
\end{lemma}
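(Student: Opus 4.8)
The plan is to adapt the proof of Lemma~4.1 of \cite{amirangelvalko} to the right-to-left rule R1, the only essentially new ingredient being the presence of ties, which I treat at the end. I work throughout under the basic coupling, so that $Y^{(1)}$ and $\widetilde{Y}^{(1)}$ are functions of one and the same Bernoulli environment, and I use the fact, guaranteed by Theorem~\ref{2CP}, that the speed process is almost surely a deterministic functional of the environment and of the initial labelling. My first observation is that the initial perturbation is itself a legitimate R1 update: since $j<j+1<\dots<j+k$, the composition $\sigma_j\cdots\sigma_{j+k}$ (in which $\sigma_{j+k}$ acts first) is exactly the map produced by a single right-to-left sweep whose jump attempts occur at the edges $(j,j+1),\dots,(j+k,j+k+1)$. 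Thus $\widetilde{Y}^{(1)}$ may be viewed as $Y^{(1)}$ with one extra, deterministic sweep inserted at time $0^-$, after which both processes run under identical dynamics, and the task is to determine how this prepended sweep acts on the limiting speeds. It is cleaner to treat the sweep as a whole than to peel off one transposition at a time, since a single-transposition statement is not preserved under the non-identity configurations that would arise in such an induction.

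The key to the reversal of the operators is the duality between labels and positions. The speed process is read off from the position process, $U_n=\lim_{m\to\infty}X^{(1)}_n(m)/m$, whereas the operators $\sigma_i$ act on the label process $Y^{(1)}$; and $X$ and $Y$ are inverse permutations. As in the proof of Theorem~\ref{duality2}, inverting a product of update operators reverses the order in which they are applied, which is precisely the content of Lemma~\ref{sigma}. Concretely, starting from $Y^{(1)}_n(0)=n$ all the conditional swaps in $\sigma_j\cdots\sigma_{j+k}$ fire, so this sweep realizes the cyclic permutation $\rho$ of the classes $\{j,\dots,j+k+1\}$ with $\rho(j)=j+k+1$ and $\rho(j+m)=j+m-1$ for $m\ge 1$; passing to positions replaces $\rho$ by $\rho^{-1}$, which is exactly the reversed sweep $\sigma_{j+k}\cdots\sigma_j$ acting on $X^{(1)}(0)$. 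This already explains why the claimed answer is $\sigma_{j+k}\cdots\sigma_j U^{(1)}$ rather than the forward product.

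It then remains to show that this reversed sweep on the initial positions is inherited by the limiting speeds, i.e.\ that the finite initial perturbation relabels exactly the speeds of the affected particles and leaves all others untouched. Here I would argue as in \cite{ferraripimentel,amirangelvalko}: the discrepancies between $\widetilde{Y}^{(1)}$ and $Y^{(1)}$ are carried by finitely many second-class particles (initially located within $\{j,\dots,j+k+1\}$), each of which has an almost sure asymptotic speed, and the sorting performed by $\sigma_{j+k}\cdots\sigma_j$ records how these finitely many speeds are rearranged. To make this rigorous I would exploit, exactly as in the proof of Theorem~\ref{duality2}, that since $\beta<1$ the sites at which no jump is attempted split $\mathbb{Z}$ at every time-step into finite independent blocks: on each block the statement is a finite permutation identity that follows from Lemma~\ref{sigma}, and one then lets the block length and the time horizon grow to recover the statement for the full speed process.

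The main obstacle is this last passage to the limit together with the treatment of ties. In continuous time the events $\{U_i=U_{i+1}\}$ have probability zero, but here they carry positive mass, so one must check that the identity $\widetilde{U}^{(1)}=\sigma_{j+k}\cdots\sigma_j U^{(1)}$ holds almost surely including on these events, on which the relevant $\sigma_i$ acts trivially, consistent with the fact that interchanging two particles of equal asymptotic speed does not alter the limiting position process. Verifying that the conditional (rather than unconditional) nature of the $\sigma_i$ is simultaneously compatible with the label dynamics and with the sorting of the limiting speeds, and that no discrepancy escapes the finite block in the limit, is the delicate point; everything else is the bookkeeping already carried out in \cite{amirangelvalko}.
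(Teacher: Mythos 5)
Your reading of the setup is correct (the prepended right-to-left sweep, the fact that all swaps fire on the initial configuration, and the inverse-permutation effect on initial positions), but the core of the lemma is left unproven, and the tool you propose for closing it cannot do the job. The lemma is a \emph{pathwise} statement: $U^{(1)}$ and $\widetilde{U}^{(1)}$ are defined on the same probability space via the basic coupling, and the claim is the almost-sure identity $\widetilde{U}^{(1)}=\sigma_{j+k}\cdots\sigma_j U^{(1)}$. Lemma~\ref{sigma}, on which your rigorization leans, is only a distributional identity ($\overset{d}{=}$), so no block decomposition can upgrade it to an almost-sure equality of two coupled speed processes. Moreover, the finite independent intervals produced by sites with no jump attempt are a one-time-step phenomenon: they are resampled at every step, particles cross infinitely many of them, and the speeds are limits as $m\to\infty$, so there is no ``finite permutation identity on each block'' whose limit is the speed identity. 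This is exactly the step you flag as delicate and then leave open, and it is the whole content of the lemma.

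What is actually needed, and what the paper does, is a direct trajectory-tracking argument. First, every particle $i\notin\{j,\dots,j+k+1\}$ is either stronger than all, or weaker than all, of the particles $j,\dots,j+k+1$; hence every swap involving such a particle occurs in both coupled processes, so $X^{(1)}_i(m)=\widetilde{X}^{(1)}_i(m)$ for all $m$ and $U^{(1)}_i=\widetilde{U}^{(1)}_i$. By the same token, the window $\{j,\dots,j+k+1\}$ occupies the same \emph{set} of sites in both processes at all times. Second, in $\widetilde{Y}^{(1)}$ particle $j+k+1$ starts to the left of particles $j,\dots,j+k$ and, being weaker than all of them, remains to their left forever; it therefore follows the leftmost trajectory of the window, whence $\widetilde{U}^{(1)}_{j+k+1}=\min\{U^{(1)}_j,\dots,U^{(1)}_{j+k+1}\}$. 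Third, with $r$ an index attaining this minimum, one verifies $\widetilde{U}^{(1)}_i=U^{(1)}_{i+1}$ for $r\le i\le j+k$ and $\widetilde{U}^{(1)}_i=\max\bigl\{\min\{U^{(1)}_j,\dots,U^{(1)}_i\},U^{(1)}_{i+1}\bigr\}$ for $j\le i<r$; these max--min identities are precisely what the conditional sorting pass $\sigma_{j+k}\cdots\sigma_j$ produces when applied entrywise to $U^{(1)}$ (ties are covered automatically, since there max and min coincide). Your cyclic-permutation observation only identifies the fixed permutation $\rho^{-1}$ acting on initial positions; it is these max--min identities, not $\rho^{-1}$, that encode how the speeds are rearranged, and your proposal never establishes them.
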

\begin{proof}
Every other particle than $\{ j, \ldots j+k+1 \}$ is either stronger than all particles $\{ j, \ldots , j+k+1 \}$ or weaker than all particles $\{ j, \ldots , j+k+1 \}$. Any swap of a particle other than $\{ j, \ldots , j+k+1 \}$ will happen in both $Y^{(1)}$ and $\widetilde{Y}^{(1)}$. So for any $i \notin \{ j, \ldots , j+k+1 \}$ we have $X_{i}^{(1)}(m) = \widetilde{X}_{i}^{(1)}(m)$ for all $m \geq 0$ and therefore $U_{i}^{(1)} = \widetilde{U}_{i}^{(1)}$ for those $i$. In $\widetilde{Y}^{(1)}$ particle $j+k+1$ is always to the left of all other particles $\left\{ j, \ldots , j+k \right\}$. So $\widetilde{U}_{j+k+1}^{(1)} = \min \{ U_{j}^{(1)} \ldots U_{j+k+1}^{(1)} \}$. Define $j \leq r \leq j+k+1$ by
$$ \min \left\{ U_{j}^{(1)} \ldots U_{j+k+1}^{(1)} \right\} = U_{r}^{(1)} $$
Then for $i = r, r+1, \ldots , j+k$ we have $\widetilde{U}_{i}^{(1)} = U_{i+1}^{(1)}$ and for $i=j,j+1, \ldots , r-1$
\begin{align*}
&\widetilde{U}_{j}^{(1)} = \max \left\{ U_{j}^{(1)} , U_{j+1}^{(1)} \right\} \\
&\widetilde{U}_{j+1}^{(1)} = \max \left\{ \min \left\{ U_{j}^{(1)} , U_{j+1}^{(1)} \right\} , U_{j+2}^{(1)} \right\} \\
&\widetilde{U}_{j+2}^{(1)} = \max \left\{ \min \left\{ \min \left\{ U_{j}^{(1)} , U_{j+1}^{(1)} \right\} , U_{j+2}^{(1)} \right\} , U_{j+3}^{(1)} \right\} \\
&\ldots
\end{align*}
This shows that $\widetilde{U}^{(1)} = \sigma_{j+k} \cdots \sigma_j U^{(1)}$.
\end{proof}

\begin{proof}[Proof of Theorem \ref{duality}:]
Consider a Bernoulli process on $\mathbb{Z} \times \mathbb{Z}$. Half of this process ($\mathbb{Z} \times \mathbb{N}$) is used to construct the TASEP $Y^{(1)}$. For any $l \in \mathbb{Z}$ we can translate the Bernoulli process by $l$ (i.e. take points of the form $(n,m+l)$ where $(n,m)$ is in the original process). We can restrict this translated process to $\mathbb{Z} \times \mathbb{N}$ and use this restricted process to construct another TASEP. Let $U^{(1)}(l) = \{ U_{n}^{(1)}(l) \}$ be the speed process for the TASEP that has been constructed using the Bernoulli process translated by $l$. For every $l$, $U^{(1)}(l)$ has distribution $\mu^{(1)}$. So we have to show that $\{ U_{n}^{(1)}(l) \}$ behaves like a TASEP with updates from left to right. In order to do this we look at a transition $\{ U_{n}^{(1)}(l) \} \rightarrow \{ U_{n}^{(1)}(l+1) \}$. The effect on the original TASEP of changing from translating by $l$ to translating by $l+1$ is that some finite sequences of $\sigma$ operators of the form $\sigma_j \cdots \sigma_{j+k}$ are added to be applied to the TASEP before the original sequence of operations. At each location a $\sigma$ operator is added with probability $\beta$. The previous Theorem shows, that applying each of these finite sequences has the same effect on the speeds as applying each sequence in reverse order to the speed process. This shows that $\{ U_{n}^{(1)}(l) \}$ behaves like a TASEP with updates from left to right and therefore the measure $\mu^{(1)}$ is stationary for the TASEP with updates from left to right. \\
The proofs for the other three models are essentially the same (using the appropriate versions of Lemma \ref{speedshift}).
\end{proof}

The following Lemma will allow us to do the explicit calculations for the joint densities of the speeds in Theorems \ref{jointR1} - \ref{jointR3c} using the connection between queueing models and the invariant measures introduced in Theorem \ref{invariantmulti}. Here $D(g_i)$ is the domain of the distribution function $g_i$ ($i=0,1,2,3,4$).

\begin{lemma} \label{queue}
If $F: D(g_i) \rightarrow \{ 1 , \ldots , N \}$ is non-decreasing then for the TASEP $\{ Y_{n}^{(i)}(m) \}_{n,m}$ the distribution of $\{ F(U_{n}^{(i)}) \}_{n}$ is the unique ergodic stationary measure of the multi-type TASEP model R$j(i)$ with types $\{ 1 , \ldots , N \}$ and densities $\lambda_l = g_i ( \sup \{ F^{-1} ( l ) \} ) - g_i ( \inf \{ F^{-1} ( l ) \} )$ for type $l=1,\ldots,N$ ($j$ as in Theorem \ref{duality}).
\end{lemma}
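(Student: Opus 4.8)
The plan is to read the statement as a \emph{projection} result and to assemble it from three ingredients already available: the identification in Theorem \ref{duality} of the speed process $\{U_n^{(i)}\}$ as a stationary ergodic configuration for the TASEP R$j(i)$ with marginal distribution function $g_i$; the fact that multi-type TASEPs are consistent under monotone merging of their class labels; and the uniqueness of multi-type stationary distributions from Theorem \ref{invariantmulti}. The map in question, $M_F$, acts coordinatewise by $U_n^{(i)}\mapsto F(U_n^{(i)})$, collapsing the continuum of ``types'' (the speeds) into the finite set $\{1,\dots,N\}$, and I want to show $(M_F)_*\mu^{(i)}$ is exactly the claimed stationary measure.

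First I would record that, since $F$ is non-decreasing with values in $\{1,\dots,N\}$, each preimage $F^{-1}(l)$ is an interval, so $M_F$ is a monotone coarsening of the type labels. The structural heart of the argument is that such a coarsening commutes with the dynamics. Using the single-type coupling of Section 2.3, a multi-type TASEP is realized as a basic coupling of ordered single-type TASEPs, and merging classes along a non-decreasing map amounts to retaining only those single-type TASEPs sitting at the thresholds of $F$. Because the basic coupling preserves the ordering of the single-type processes and processes updates one at a time (this holds for all of R0--R4, as noted at the end of Section 2.3), any retained subcollection again evolves as a coupled family of single-type TASEPs, that is, as a genuine multi-type TASEP under the same rule R$j(i)$. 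This gives the pathwise intertwining $M_F\circ\Phi=\Phi^{(N)}\circ M_F$, where $\Phi$ and $\Phi^{(N)}$ denote the continuum-type and $N$-type dynamics under R$j(i)$.

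Granting the intertwining, stationarity transfers at once: $\mu^{(i)}$ is $\Phi$-invariant by Theorem \ref{duality}, so its pushforward $(M_F)_*\mu^{(i)}$, the law of $\{F(U_n^{(i)})\}$, is invariant for the $N$-type TASEP R$j(i)$. Ergodicity transfers too, since $M_F$ commutes with the shift (it acts site-wise) and a shift-commuting measurable factor of an ergodic measure is ergodic; for $i=3,4$ the relevant invariance is under even shifts, which $M_F$ also respects. It then remains to read off the densities: because the marginal of $U_n^{(i)}$ has distribution function $g_i$, the density of class $l$ is $\mathbb{P}[F(U_n^{(i)})=l]=\mathbb{P}[U_n^{(i)}\in F^{-1}(l)]=g_i(\sup F^{-1}(l))-g_i(\inf F^{-1}(l))=\lambda_l$, using that $F^{-1}(l)$ is an interval. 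For $i=3,4$ one reads the even-site density from $g_i$ and the odd-site density from its companion function, consistent with the $\mu$-measures of Theorem \ref{invariantmulti}. Discarding any degenerate class with $\lambda_l=0$, the partial sums $\rho_l=\sum_{l'\le l}\lambda_{l'}$ are strictly increasing, so Theorem \ref{invariantmulti} applies. Uniqueness then finishes the proof: that theorem gives, for each fixed density vector, a unique translation-invariant (even-shift-invariant for R3/R4) ergodic stationary measure for R$j(i)$, and our law is stationary, ergodic, and carries exactly the densities $\lambda_l$, hence coincides with it.

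The step I expect to be the main obstacle is the rigorous justification of the intertwining $M_F\circ\Phi=\Phi^{(N)}\circ M_F$ for the continuum-type process, and in particular verifying that it respects the discrete-time update orders: the simultaneous jump attempts processed one-by-one under R1/R2 and the even/odd asymmetry under R3/R4. The continuous-time consistency of multi-type TASEP under monotone merging is classical, but here one must confirm that the ordered, one-by-one processing that makes basic coupling order-preserving also guarantees that dropping intermediate single-type TASEPs leaves a faithful lower-rank multi-type process under the \emph{same} rule. The even/odd density bookkeeping for $i=3,4$ is a secondary technical point that should follow from the relations defining the $\mu$-measures.
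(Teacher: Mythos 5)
Your proposal is correct and takes essentially the same approach as the paper: the paper's proof consists of citing Corollary 5.4 of \cite{amirangelvalko}, whose argument is precisely what you reconstruct --- stationarity and ergodicity of the speed process (here supplied by Theorem \ref{duality}), commutation of monotone class-merging with the dynamics via the ordered single-type coupling, reading off the class densities from the marginals $g_i$ (with the even/odd companion relation for R3/R4), and then uniqueness from Theorem \ref{invariantmulti}. Your flagged ``main obstacle'' is handled exactly as you suggest, since under R1--R4 each time-step is a sequence of single-site updates processed one at a time, so the merging argument applies update by update.
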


\begin{proof}
The proof is analogous to the proof of Corollary 5.4 in \cite{amirangelvalko}.
\end{proof}

With the help of this Lemma we can do all the calculations needed for the results in Theorems \ref{jointR1} - \ref{jointR3c}. Depending on the model we are considering we will choose the function $F$ from Lemma \ref{queue} to be $F_i = \min \{ j : g_i(u) < x_j \}$ for some increasing sequence $(x_1,\ldots,x_{N-1})$ in $[0,1]$. Then we put $V_n = F(U_n)$ and the distribution of the $V$s is given by the invariant measure for the multi-type models and can be calculated explicitly using the queueing representation.

\begin{proof}[Proof of Theorem \ref{jointR1}:]
By Lemma \ref{queue} we have with $N=3$ that $F_1(U_n)$ is distributed according to the unique ergodic stationary measure of a 3-type TASEP with updates from left to right and densities
\begin{align*}
&\lambda_1 = \mathbb{P} \left[ x_0 < g_1(U_n) < x_1 \right] = x_1 - x_0 = x_1 \\
&\lambda_2 = \mathbb{P} \left[ x_1 < g_1(U_n) < x_2 \right] = x_2 - x_1 \\
&\lambda_3 = \mathbb{P} \left[ x_2 < g_1(U_n) < x_3 \right] = x_3 - x_2 = 1 - x_2
\end{align*}
$\lambda_1$ is the density of first class particles, $\lambda_2$ is the density of second class particles and $\lambda_3$ is the density of third class particles (or holes). Recall that $V_n = F_1(U_n)$. Using the queueing representation for the unique ergodic stationary measure of a 3-type TASEP we can calculate the joint distribution $\left( V_0, V_1 \right)$ explicitly. This distribution depends on the $x_i$. Taking suitable derivatives with respect to these $x_i$ we get the density of the corresponding speeds. We have for example
\begin{align*}
\mathbb{P} \left[ U_0 < g_1^{-1}(x_1) < U_1 < g_1^{-1}(x_2) \right]
&= \mathbb{P} \left[ V_0 = 1, V_1 = 2 \right] \\
&= x_1 x_2 \left( x_2 - x_1 \right)
\end{align*}
since the probability of having a second class particle at position 1 is $x_2 - x_1$ (since this is the density of second class particles) and to have a first class particle at position 0 we then have to have an arrival (probability $x_1$) and a service (probability $x_2$) because having a second class particle at site 1 means that the queue was empty at that time (so in order to have a departure at site 0 we need an arrival at site 0). Remember that in Theorem \ref{invariantmulti} the particles in the TASEP jumped from the right to the left. If we want to consider the TASEP with jumps from the left to the right (and that is what we are doing here) we have to read the queues from right to left. So position 1 comes before position 0 and the probability of having a second class particle at position 1 is independent of arrivals and services at position 0. So for $u_0 < u_1$ we put $x_1 = g_1(u_0)$ and $x_2 = g_1(u_1)$ and get as density
\begin{align*}
\mathbb{P} \left[ U_0 \in du_0 , U_1 \in du_1 \right]
&= \frac{dx_1}{du_0} \frac{dx_2}{du_1} \frac{d}{dx_1} \frac{d}{dx_2} x_1 x_2 \left( x_2 - x_1 \right) \\
&= \frac{1-\beta}{4\beta^2} \left( 1 - u_0 \right)^{-\frac{3}{2}} \left( 1 - u_1 \right)^{-\frac{3}{2}} \left( 2 g_1(u_1) - 2 g_1(u_0) \right) \\
& = \frac{1-\beta}{2\beta^3} \left( 1 - u_0 \right)^{-\frac{3}{2}} \left( 1 - u_1 \right)^{-\frac{3}{2}} \\
&\qquad \qquad \qquad \qquad \cdot \left( \sqrt{\frac{1-\beta}{1-u_1}} - \sqrt{\frac{1-\beta}{1-u_0}} \right)
\end{align*}
Similarly, we have
\begin{align*}
\mathbb{P} \left[ g_1^{-1}(x_1) < U_1 < g_1^{-1}(x_2) < U_0 \right]
&= \mathbb{P} \left[ V_0 = 3, V_1 = 2 \right] \\
&= \left( 1 - x_2 \right) \left( x_2 - x_1 \right)
\end{align*}
and therefore we get as density for $u_0 > u_1$ (putting $x_1 = g_1(u_1)$, $x_2 = g_1(u_0)$)
\begin{align*}
\mathbb{P} \left[ U_0 \in du_0 , U_1 \in du_1 \right]
&= \left( - \frac{dx_1}{du_0} \right) \left( - \frac{dx_2}{du_1} \right) \frac{d}{dx_1} \frac{d}{dx_2} \left( 1 - x_2 \right) \left( x_2 - x_1 \right) \\
&= \frac{1-\beta}{4\beta^2} \left( 1 - u_0 \right)^{-\frac{3}{2}} \left( 1 - u_1 \right)^{-\frac{3}{2}} \\
&= g_1^{'}(u_0)g_1^{'}(u_1)
\end{align*}
To get the density for $u_0 = u_1$ we consider
\begin{align*}
\mathbb{P} \left[ g_1^{-1}(x_1) < U_0, U_1 < g_1^{-1}(x_2) \right]
&= \mathbb{P} \left[ V_0 = 2, V_1 = 2 \right] \\
&= \left( 1 - x_1 \right) x_2 \left( x_2 - x_1 \right)
\end{align*}
and let $x_1, x_2 \rightarrow g_1(u)$. We get
\begin{align*}
\mathbb{P} \left[ U_0 , U_1 \in du \right]
&= \lim_{x_1,x_2 \rightarrow g_1(u)} \frac{\left( 1 - x_1 \right) x_2 \left( x_2 - x_1 \right)}{g_1^{-1}(x_2) - g_1^{-1}(x_1)} \\
&= \frac{\sqrt{1-\beta}\left( 1 - g_1(u) \right) g_1(u)}{2 \beta \left( 1 - u \right)^{\frac{3}{2}}} \\
&= \frac{\sqrt{1-\beta}}{2 \beta^2 \left( 1 - u \right)^{\frac{3}{2}}} \left( 1 - \frac{1}{\beta} \right) + \frac{1-\beta}{2 \beta^2 \left( 1 - u \right)^{2}} \left( \frac{2}{\beta} - 1 \right) \\
&\qquad \qquad - \frac{\sqrt{1-\beta}\left( 1 - \beta \right)}{2 \beta^3 \left( 1 - u \right)^{\frac{5}{2}}}
\end{align*}
To get the probabilities in the Theorem we only have to integrate the densities over the appropriate ranges of $u_0$, $u_1$ and $u$. Alternatively we can use the following:
\begin{align*}
\mathbb{P} \left[ U_0^{(1)} < U_1^{(1)} \right]
&= \mathbb{P} \left[ g_1(U_0^{(1)}) < g_1(U_1^{(1)}) \right] \\
&\overset{(*)}{=} \mathbb{P} \left[ g_0(U_0^{(0)}) < g_0(U_1^{(0)}) \right] \\
&= \mathbb{P} \left[ U_0^{(0)} < U_1^{(0)} \right] \\
&= \frac{1}{3}
\end{align*}
$(*)$ follows from the fact that the distribution of $\{ g_1(U_n^{(1)})\}$ is the unique translation invariant stationary ergodic measure for the TASEP R2 with marginals uniform on $[0,1]$. The distribution of $\{ g_0(U_n^{(0)})\}$ is the unique translation invariant stationary ergodic measure for the TASEP in continuous time. Since the stationary distributions for the multi-type TASEPs R0 and R2 are the same (see Theorem \ref{invariantmulti}) $\{ g_1(U_n^{(1)})\}$ has the same distribution as $\{ g_0(U_n^{(0)}) \}$.
\end{proof}

The proofs for Theorems \ref{jointR3a} - \ref{jointR3c} work in exactly the same way.

\section{Fully parallel updates}
Finally we mention the model with ``fully parallel updates''. 
If an update occurs at site $x$ at time $t$
(which happens with probability $\beta$ as usual),
this update causes a jump from $x$ to $x+1$ if and only if 
$\eta_{t-1}(x)=1$ and $\eta_{t-1}(x+1)=0$ 
(that is, the jump is already possible before 
any other updates at the current time-step are performed).

There are several important differences between this model and the 
models we have studied earlier.
The Bernoulli product measures
$\nu_{\rho}$ are no longer invariant. Furthermore, the 
basic coupling no longer preserves an ordering between
different initial configurations, and so it is no longer
clear how to define a multi-class system. 
If we use basic coupling to couple two
systems which start with one discrepancy at the origin then this single
discrepancy can generate additional discrepancies. It would already be
interesting to know how the leftmost and rightmost discrepancies
behave. Do they have asymptotic speeds, and if so are the speeds random or
deterministic? There is still a natural percolation representation,
and we can still obtain a hydrodynamic limit result in the
sense that $\frac{1}{n} \sum_{un < k < vn} \eta_t(n)$ converges
a.s. to the constant value $\int_{u}^{v} f(w) dw$, for $u < v$ and
some function $f$, but the stronger result that $\lim_{n \rightarrow
  \infty} \mathbb{E} \left[ \eta_n(k) \right]$ exists and is equal to
$f(u)$ whenever $\frac{k}{n}$ tends to $u$ does not follow using the
same methods as in the proof of Theorem \ref{hydro}.

\section*{Acknowledgments}
JM was supported by the EPSRC. PS was supported by a ``DAAD Doktorandenstipendium'' and the EPSRC.

\bibliographystyle{plain}
\bibliography{article}

\begin{thebibliography}{10}

\bibitem{amirangelvalko}
Gideon Amir, Omer Angel, and Benedek Valk\'{o}.
\newblock The {T}{A}{S}{E}{P} speed process.
\newblock {\em arXiv:0811.3706v1 [math.PR]}, 2008.

\bibitem{blytheevans}
R.~A. Blythe and M.~R. Evans.
\newblock Nonequilibrium steady states of matrix-product form: a solver's
  guide.
\newblock {\em J. Phys. A}, 40(46):R333--R441, 2007.

\bibitem{burke}
Paul~J. Burke.
\newblock The output of a queuing system.
\newblock {\em Operations Res.}, 4:699--704, 1956.

\bibitem{chowdhurysantenschadschneider}
Debashish Chowdhury, Ludger Santen, and Andreas Schadschneider.
\newblock Statistical physics of vehicular traffic and some related systems.
\newblock {\em Phys. Rep.}, 329(4-6):199--329, 2000.

\bibitem{ferrarikipnis}
P.~A. Ferrari and C.~Kipnis.
\newblock Second class particles in the rarefaction fan.
\newblock {\em Ann. Inst. H. Poincar\'e Probab. Statist.}, 31(1):143--154,
  1995.

\bibitem{ferrarigoncalvesmartin}
Pablo~A. Ferrari, Patr\'icia Gon\c{c}alves, and James~B. Martin.
\newblock Collision probabilities in the rarefaction fan of asymmetric
  exclusion processes,.
\newblock {\em Ann. Inst. Henri Poincar\'e Probab. Stat.}, 45(4):1048--1064,
  2009.

\bibitem{ferrarimartin}
Pablo~A. Ferrari and James~B. Martin.
\newblock Multi-class processes, dual points and {$M/M/1$} queues.
\newblock {\em Markov Process. Related Fields}, 12(2):175--201, 2006.

\bibitem{ferrarimartin2}
Pablo~A. Ferrari and James~B. Martin.
\newblock Stationary distributions of multi-type totally asymmetric exclusion
  processes.
\newblock {\em Ann. Probab.}, 35(3):807--832, 2007.

\bibitem{ferraripimentel}
Pablo~A. Ferrari and Leandro P.~R. Pimentel.
\newblock Competition interfaces and second class particles.
\newblock {\em Ann. Probab.}, 33(4):1235--1254, 2005.

\bibitem{harris}
T.~E. Harris.
\newblock Additive set-valued {M}arkov processes and graphical methods.
\newblock {\em Ann. Probability}, 6(3):355--378, 1978.

\bibitem{helbing}
Dirk Helbing.
\newblock Traffic and related self-driven many-particle systems.
\newblock {\em Rev. Mod. Phys.}, 73(4):1067--1141, Dec 2001.

\bibitem{hinrichsen}
Haye Hinrichsen.
\newblock Matrix product ground states for exclusion processes with parallel
  dynamics.
\newblock {\em J. Phys. A}, 29(13):3659--3667, 1996.

\bibitem{liggett1}
Thomas~M. Liggett.
\newblock {\em Interacting particle systems}, volume 276 of {\em Grundlehren
  der Mathematischen Wissenschaften [Fundamental Principles of Mathematical
  Sciences]}.
\newblock Springer-Verlag, New York, 1985.

\bibitem{liggett2}
Thomas~M. Liggett.
\newblock {\em Stochastic interacting systems: contact, voter and exclusion
  processes}, volume 324 of {\em Grundlehren der Mathematischen Wissenschaften
  [Fundamental Principles of Mathematical Sciences]}.
\newblock Springer-Verlag, Berlin, 1999.

\bibitem{mountfordprabhakar}
T.~Mountford and B.~Prabhakar.
\newblock On the weak convergence of departures from an infinite series of
  {$\cdot/M/1$} queues.
\newblock {\em Ann. Appl. Probab.}, 5(1):121--127, 1995.

\bibitem{oconnell}
Neil~M. O'Connell.
\newblock Directed percolation and tandem queues.
\newblock HP Labs technical report, HPL-BRIMS-2000-28,
  \texttt{http://www.hpl.hp.com/techreports/2000/}, 2000.

\bibitem{rajewskysantenschadschneiderschreckenberg}
N.~Rajewsky, L.~Santen, A.~Schadschneider, and M.~Schreckenberg.
\newblock The asymmetric exclusion process: comparison of update procedures.
\newblock {\em J. Statist. Phys.}, 92(1-2):151--194, 1998.

\bibitem{rost}
H.~Rost.
\newblock Nonequilibrium behaviour of a many particle process: density profile
  and local equilibria.
\newblock {\em Z. Wahrsch. Verw. Gebiete}, 58(1):41--53, 1981.

\bibitem{schutz}
G\"{u}nther Sch\"{u}tz.
\newblock Time-dependent correlation functions in a one-dimensional asymmetric
  exclusion process.
\newblock {\em Phys. Rev}, 1993.

\bibitem{spitzer}
Frank Spitzer.
\newblock Interaction of {M}arkov processes.
\newblock {\em Advances in Math.}, 5:246--290 (1970), 1970.

\end{thebibliography}

\bigskip

\parbox{0.33\textwidth}{\noindent
 James Martin\\
 Department of Statistics\\
 University of Oxford\\
 1 South Parks Road  \\
 Oxford OX1 3TG\\
 United Kingdom\\
\texttt{martin@stats.ox.ac.uk}}
\hfill
\parbox{0.33\textwidth}{\noindent
 Philipp Schmidt \\
 Department of Statistics\\
 University of Oxford\\
 1 South Parks Road  \\
 Oxford OX1 3TG\\
 United Kingdom\\
\texttt{schmidt@stats.ox.ac.uk}}

\end{document}